\newtheorem{thm}{Theorem}
\newtheorem{lem}[thm]{Lemma}
\newtheorem{cor}[thm]{Corollary}
\newtheorem{rem}[thm]{Remark}
\newtheorem{lemma}[thm]{Lemma}
\newcommand{\GL}{\operatorname{GL}}
\newcommand{\Tr}{\operatorname{Tr}}
\numberwithin{equation}{section}
\numberwithin{thm}{section}
\numberwithin{table}{section}
\def\vol {{\mathrm{vol\,}}}
\def\squareforqed{\hbox{\rlap{$\sqcap$}$\sqcup$}}
\def\qed{\ifmmode\squareforqed\else{\unskip\nobreak\hfil
\penalty50\hskip1em\null\nobreak\hfil\squareforqed
\parfillskip=0pt\finalhyphendemerits=0\endgraf}\fi}
\def \balpha{\bm{\alpha}}
\def\supp{\text{supp}}
\def\cB{{\mathcal B}}
\def\cD{{\mathcal D}}
\def\cF{{\mathcal F}}
\def\cI{{\mathcal I}}
\def\cJ{{\mathcal J}}
\def\cL{{\mathcal L}}
\def\cS{{\mathcal S}}
\def\inv{\mathrm{inv}} 
\def\sqrt{\mathrm{sqrt}}
\def \F {{\mathbb F}}
\def \R {{\mathbb R}}
\def \Z {{\mathbb Z}}
\def \Ki {\F_q(T)_\infty}
\def\\{\cr}
\def\({\left(}
\def\){\right)}
\def\Tr{{\mathrm{Tr}}}
\def\Res{\mathrm{Res}}
\newcommand{\ov}{\overbar}
\newcommand{\overbar}[1]{\mkern 1.5mu\overline{\mkern-1.5mu#1\mkern-1.5mu}\mkern 1.5mu}
 \newcommand{\Mod}[1]{\ (\mathrm{mod}\ #1)}
\newcommand{\comm}[1]{{\color{magenta}   Bryce: \hrule #1\hrule}}
\def\l@subsection{\@tocline{2}{0pt}{2.8pc}{1pc}{}}
\def\l@subsubsection{\@tocline{2}{0pt}{5pc}{7.5pc}{}}
 \author[C.~Bagshaw]{Christian Bagshaw}
 \address{School of Mathematics and Statistics, University of New South Wales.
 Sydney}
 \email{c.bagshaw@unsw.edu.au}
 \author[B.~Kerr]{Bryce Kerr}
 \address{School of Science, University of New South Wales.
 Canberra}
 \email{bryce.kerr@unsw.edu.au}
\title{Lattices in Function Fields and Applications  }
\keywords{function field, finite field, geometry of numbers, lattices, convex bodies, polynomial curves, modular inversions, modular roots}
\subjclass[2010]{11T06, 11H06}
\begin{document}

\maketitle

\begin{abstract}
In recent decades, the use of ideas from Minkowski's Geometry of Numbers has gained recognition as a helpful tool in bounding the number of solutions to modular congruences with variables from short intervals. In 1941, Mahler introduced an analogue to the Geometry of Numbers in function fields over finite fields. Here, we build on Mahler's ideas and develop results useful for bounding the sizes of intersections of lattices and convex bodies in $\F_q((1/T))^d$, which are more precise than what is known over $\mathbb{R}^d$. These results are then applied to various problems regarding bounding the number of solutions to congruences in $\F_q[T]$, such as the number of points on polynomial curves in low dimensional subspaces of finite fields. Our results improve on a number of previous bounds due to Bagshaw, Cilleruelo, Shparlinski and Zumalac\'{a}rregui. We also present previous techniques developed by various authors for estimating certain energy/point counts in a unified manner.
\end{abstract}

\tableofcontents

\section{Introduction}
\subsection{Motivation} Minkowski's \textit{Geometry of Numbers} \cite{M1910} introduces a number of notions and results regarding $\Z$-lattices in $\R^d$, and their relationships with symmetric convex bodies. In particular, let $\cL$ be a lattice of the form
$$ \cL = A_\cL \Z^d$$
for some $A_\cL \in \GL_d(\R)$, and let $\cB \subseteq \R^d$ denote an open, convex set which is symmetric about the origin. We will set $\det \cL = |\det A_{\cL}|$. Perhaps the most famous result in this area, often referred to as Minkowski's First Theorem, states that if 
$$\vol \cB > 2^d \det \cL$$
then $\cB$ contains a non-zero point of $\cL$. Minkowski also introduced the idea of \textit{successive minima}. For each $i$ satisfying $1 \leq i \leq d$, we define the $i$-th successive minima of $\cL$ with respect to $\cB$ as 
$$\sigma_i = \inf\{\lambda > 0 : \lambda \cB \text{ contains $i$ linearly independent vectors of $\cL$}\}.$$
 Minkowski's Second Theorem states that
\begin{align}\label{eq:Minkowski_second_theorem}
\frac{2^n}{n!} \frac{\det \cL}{\vol \cB}\le \sigma_1...\sigma_d \leq 2^n \frac{\det \cL}{\vol \cB},
\end{align}
from which one may recover the first theorem as a special case. It is possible to use~\eqref{eq:Minkowski_second_theorem} to count points in the intersection $\cL\cap \cB$ when $\cB$ is `large enough'. But for general $\cB,\cL$  an important result, given in \cite{H2002}, states that
\begin{align}\label{eq:Henk}
    |\cB \cap \cL| < 2^{d-1}\prod_{i=1}^d\left\lfloor \frac{2}{\sigma_i} + 1\right\rfloor.
\end{align}
The above results have been used extensively throughout many areas of mathematics, but in particular in number theory.

In this paper, we focus on applications of the theory of successive minima to problems which aim to determine when we can lift integral solutions $(x_1,\dots,x_n)$ of congruences 
\begin{align}
\label{eq:eqneqneqneqneqneqneqqneqnneqnefnqenqeneq}
f(x_1,\dots,x_n)\equiv 0 \Mod{q}
\end{align}
to integral solutions $(\tilde{x}_1,\dots,\tilde{x}_n)$ of equations 
\begin{align}
\label{eq:11-1-1-1-1-1}
\tilde f(\tilde x_1,\dots,\tilde x_n)= 0
\end{align}
with $f,\tilde f$ certain rational functions. A natural restriction on the variables $x_1,\dots,x_n$ is that they have size bounded by some parameter 
\begin{align}
\label{eq:xjbounds}
|x_j|\le H_j, \quad 1\le j \le n    
\end{align}
 which is a scenario that frequently occurs in number theory. In certain cases it is easier to count solutions to equations~\eqref{eq:11-1-1-1-1-1} than congruences~\eqref{eq:eqneqneqneqneqneqneqqneqnneqnefnqenqeneq} (for example by taking advantage of order preserving properties of addition and multiplication) and this phenomenon motivates lifting the congruence~\eqref{eq:eqneqneqneqneqneqneqqneqnneqnefnqenqeneq} to the equation~\eqref{eq:11-1-1-1-1-1}. 

One early example of this strategy occurs in Heath-Brown's work on the divisor function in arithmetic progressions~\cite{Heath1986} where estimates for the number of solutions to congruences 
\begin{align}
\label{eq:hb-1}
\frac{1}{x_1}+\frac{1}{x_2}\equiv \lambda \Mod{q}, \quad 1\le x_1,x_2\le H,
\end{align}
are required. One may apply Dirichlet's pigeonhole principle to obtain small integers $a,b$ satisfying $\lambda \equiv ab^{-1} \Mod{q}$ which after substituting into the above and clearing denominators, reduces to counting solutions to 
$$(x_1+\alpha_1)(x_2+\alpha_2)=\lambda_1,$$
for some $\alpha_1$ and $\alpha_2$ in terms of $a$ and $b$ and small $\lambda_1 \ll q$, to which one can apply bounds for the divisor function. Working through the details provides a bound of $H^{o(1)}$ for the number of solutions to~\eqref{eq:hb-1} provided $H\le q^{1/3}.$ A significant open problem is to improve on the exponent $1/3$. One expects a bound of the form $H^{o(1)}$ in a longer range $H\le q^{1/2},$ which can be shown to be optimal via a density argument.  

It is possible to interpret Heath-Brown's use of the pigeonhole principle as a special case of Minkowski's first theorem. This allows for a wide-reaching generalisation of the above idea: with variables satisfying~\eqref{eq:xjbounds}, when can ~\eqref{eq:eqneqneqneqneqneqneqqneqnneqnefnqenqeneq} be lifted to~\eqref{eq:11-1-1-1-1-1}? In the case of $f$ a polynomial, after defining a suitable lattice a straightforward application of Minkowski's theorem to establish the existence of a small lattice point shows this is possible for $H$  small enough, depending only on the degree $d$ and number of variables $n$ of $f$. Important developments of this idea are due to Coppersmith (see for example~\cite{Coppersmith1997}) with a focus on applications to cryptography. In the context of number theory, techniques related to the above circle of ideas have been developed by a number of different authors. Some recent examples and applications include:
\begin{enumerate}
    \item A series of paper by Bourgain, Garaev, Konyagin and Shparlinski~\cite{doi:10.1137/110850414,Bourgain2013,Bourgain2014} who deal with multiplicative congruences with various applications to computer science and character sums.
    \item Bourgain and Garaev~\cite{BourgainGaraev2014,J2014} who consider equations with Kloosterman fractions with applications to the Brun-Titchmarsh theorem.
    \item Papers of Chang~\cite{Mei2014} and Cilleruelo, Garaev, Ostafe and Shparlinski~\cite{Cilleruelo2012} on solutions to polynomial congruences with applications to algebraic dynamical systems.
    \item Dunn, Kerr, Shparlinski and Zaharescu~\cite{DKSZ2020} (see also~\cite{KSSZ2021}) who consider equations with modular square roots motivated by applications to moments of half integral weight modular forms.
    \item Kerr, Shparlinski, Wu and Xi~\cite{https://doi.org/10.48550/arxiv.2204.05038} who consider various equations with Kloosterman fractions which have been applied to the error term in the fourth moment of Dirichlet $L$-functions. 
\end{enumerate}

There has been some interest in extending the  results listed above into the setting of function fields over finite fields, such as work of Bagshaw and Shparlinski~\cite{BS2022} who consider analogues of modular square roots, Cilleruelo and Shparlinski~\cite{CS2013} who consider counting points on curves in arbitrary finite fields and Shparlinski and Zumalac{\'a}rregui~\cite{SZ2018} who consider Kloosterman fractions. With this in mind, we now let $q$ denote a prime power and $\F_q$ the finite field of order $q$. We let $\F_q[T]$ denote the ring of univariate polynomials over $\F_q$, with $\F_q(T)$ its field of fractions and $\Ki$ the analytic completion of $\F_q(T)$ at infinity. In this setting, the analogue of the condition~\eqref{eq:xjbounds} is for the variables to come from the set of polynomials of degree bounded by some parameter $n$, so that equations of the form~\eqref{eq:eqneqneqneqneqneqneqqneqnneqnefnqenqeneq} can be considered as counting solutions to equations in finite fields when the variables run through small dimensional linear subspaces.  We refer the reader to work of Sawin~\cite{10.1215/00127094-2020-0060} and Sawin and Shusterman~\cite{10.4007/annals.2022.196.2.1,Sawin2022}  for significant recent progress and applications of other short interval type problems in $\F_q[T]$.

For the problems we consider, the current state of art in $\F_q[T]$ is behind that of $\Z$. This is due to a lack of  general techniques (in $\Ki^d$) coming from the geometry of numbers which allow a lifting from congruences to equations. The purpose of this paper is to remedy this by providing results that connect the theory of lattices to counting solutions to congruences with variables from short intervals in $\F_q[T]$. As an application, we improve upon results of~\cite{BS2022,CS2013,SZ2018}, along with presenting results and techniques due to a number of different authors (over $\Z$) in a unified way. 

The study of the Geometry of numbers in $\Ki^d$ was initiated by Mahler~\cite{M1941} and has since been applied to various problems in diophantine approximation. Further developing this area is still a topic of interest. In this direction, we mention a recent paper of 
Roy and Waldschmidt~\cite{https://doi.org/10.1112/S0025579317000237} who generalise Schmidt and Summerer's parametric geometry of numbers~\cite{Schmidt2013-vw} into this setting. In Section~\ref{sec:Counting_and_localtoglobal}, we extend the estimate of Henk~\eqref{eq:Henk} into $\Ki^d$, which may be of independent interest.

\subsection{Overview}

 The structure of our paper is as follows:

 \begin{enumerate}
\item In Section \ref{sec:notation} we recall basic facts and notation related to $\F_q[T]$.
\item In Section~\ref{sec:results} present statements of new estimates for various counting problems which improve those of~\cite{BS2022,CS2013,SZ2018}.
\item Sections \ref{sec:misc_prelim} and \ref{sec:counting_global} present preliminaries that do not use any notions regarding lattices in $\Ki^d$. Section \ref{sec:misc_prelim} contains a few miscellaneous results inspired by \cite{BGKS}, and in Section \ref{sec:counting_global} we cite a number of preliminaries used for counting solutions to equations over $\F_q[T]$. As previously mentioned, the proofs of our results from Section~\ref{sec:results} all involve lifting congruences in $\F_q[T]/\langle F \rangle$ for some $F \in \F_q[T]$ to equations over $\F_q[T]$, after which these results from Section \ref{sec:counting_global} are applied. 
\item In Section \ref{sec:lattices} we summarise Mahler's work on the geometry of numbers in $\Ki^d$.
\item  Section \ref{sec:Counting_and_localtoglobal}, which can be regarded as the core of this paper,  develops a series of new results and techniques  useful for bounding intersections of lattices and convex bodies, as well as lifting from congruences to equations. These include a function field version of Henk's estimate~\eqref{eq:Henk}. As stated previously, in general these results are more precise than what is known over $\mathbb{R}^d$. This is mostly due to the fact that in $\Ki^d$, \textit{convex bodies are additive subgroups}. This means that the intersection of a lattice and convex body is really the intersection of two additive subgroups of $\Ki^d$, giving one more structure to work with. All of the techniques from this section are later used in proving the main results from Section \ref{sec:results}, although we present these techniques in greater generality than needed as we believe there is potential for many applications. For example, the ideas of Katznelson in \cite{Katznelson1993, Katznelson1994} regarding certain properties of integral matrices could be extended to $\F_q[T]$. 
\item  The rest of the paper is then devoted to proving the main results as stated in Section \ref{sec:results} by combining the results from Section~\ref{sec:Counting_and_localtoglobal} with Section~\ref{sec:counting_global}.
\end{enumerate}

\subsection{Notation}\label{sec:notation} Before outlining our main results, we first introduce some general notation. More notation of course will be introduced throughout the paper, and in particular our notations for lattices and convex bodies are introduced in Section \ref{sec:lattices}. Additionally, a notation guide is included in Appendix \ref{notationguide} for ease of reference. 

As before, we fix a prime power $q$ and let $\F_q$ denote the finite field of order $q$. We let $\F_q[T]$ denote the ring of univariate polynomials over $\F_q$ and $\F_q(T)$ its field of fractions. We define the usual absolute value $|\cdot |$ on $\F_q(T)$ as
\begin{align*}
   \left| \frac{g}{h}\right| =
    \begin{cases}
    q^{\deg g - \deg h}, & g \neq 0\\
    0, & g = 0.
    \end{cases}
\end{align*}
To be consistent with this definition, we will formally set $\deg 0 = -\infty$.  The completion of $\F_q(T)$ with respect to this absolute value is the field of Laurent series in $1/T$, 
$$\F_q((1/T)) = \left\{\sum_{i=-\infty}^na_iT^i~:~n \in \Z,~a_i \in \F_q,~a_n \neq 0\right\}.$$
Additionally, $|\cdot|$ extends to this space in the expected way:
$$\bigg{|} \sum_{i=-\infty}^na_iT^i \bigg{|} = q^n. $$
For simplicity of notation, from this point forward we set 
$$\Ki := \F_q((1/T)). $$
Given a positive integer $d$ we view $\Ki^d$ as a $d$-dimensional vector space over $\Ki$ and define a norm on this space by 
$$\|(x_1,...,x_d)\| = \max\left\{|x_1|,...,|x_d|\right\}. $$
Also given some $s = (s_1,...,s_d) \in \Ki^d$ and integer $m$, we define the open ball of radius $q^m$ in $\Ki^d$ centered at $s$ to be
\begin{align}\label{eq:openball}
    B_m(s) = \{(x_1,...,x_d) \in \Ki^d : |x_i-s_i| < q^m\}. 
\end{align}
For shorthand, we will set $B_m = B_m(0) \subseteq \Ki$. We will also let 
\begin{align*}
    \cI_m(s) = B_m(s) \cap \F_q[T]^d
\end{align*}
and $\cI_m = B_m \cap \F_q[T]$. 

 The variable $F$ will always denote a polynomial in $\F_q[T]$ of degree $r$. We will mostly be interested in certain congruences in $\F_q[T]/\langle F\rangle $, so we naturally identify $\cI_r$ as a set of representatives of $\F_q[T]/\langle F \rangle$. For any $x \in \F_q[T]$, by $\deg_F x $ we will mean the degree of the unique $x' \in \F_q[T]$ such that $\deg x' < r$ and $x' \equiv x \Mod{F}$. Additionally, given some $x \in \F_q[T]$ such that $\gcd(x,F) = 1$, we will denote by $\ov{x}$ the multiplicative inverse of $x$ modulo $F$. If this inverse is taken to a different modulus, this will be specified. 

\iffalse
If $F$ is irreducible, as fields, $\F_q[T]/F(T) \cong \F_{q^r}$. With this in mind, for an integer $m$ we write $x \sim m$ to mean  $x \in \F_{q^r}$, but $\deg x < m$ when we view $x$ as an element of $\F_q[T]/F(T)$. In particular, if $\rho$ is a root of $F$, then one can naturally identify the two sets
\begin{align*} 
\{x(T) & \in \F_q[T]/F(T):~x \sim m \} \\
& = \{u_0+u_1\rho + \ldots + u_{m-1} \rho^{m-1}:~  u_0, u_1, \ldots, u_{m-1} \in \F_q\}. 
\end{align*}  
Thus we switch freely between the languages of finite fields $\F_{q^r}$ and of function fields $\F_q[T]/F(T)$. 
Given $x\in \F_q[T]$ with $(x,F)=1$ we define $\ov{x}$ to be the unique polynomial of degree less than $\deg F$ such that $x\ov{x} \equiv 1 \Mod{F}$.
\fi

Given two functions $f,g$ on some additive group $G$, we define the convolution 
$$(f*g)(x)=\sum_{y \in G}f(y)g(x-y).$$
Let $f^{(1)}=f$ and for $k\ge 2$ inductively define 
$$f^{(k)}(x)=(f^{(k-1)}*f)(x).$$
Given a set $\cS,$ we also use $\cS(x)$ for the indicator function of $\cS$
$$\cS(x)=\begin{cases}1 \quad \text{if} \quad x\in \cS \\ 0 \quad \text{otherwise.} \end{cases}.$$
Under this notation, we note that 
\begin{align}\label{eq:kth_convolution}
    \cS^{(k)}(x) = |\{(s_1,...,s_k) \in \cS^k ~:~ s_1 + ... + s_k = x\}|.
\end{align}
 Given a complex weight $\balpha = \{\alpha(x)\}_{x \in G}$ and a real number $d \geq 1$ we define the $\ell_d$ norms in the usual way 
$$\|\alpha\|_d=\left(~\sum_{x \in G}|\alpha(x)|^{d}\right)^{1/d}.$$
We also define the support of $\balpha$
$$\supp(\balpha) = \{x ~:~ \alpha(x) \neq 0\}. $$

%% at the moment I dont think we use the stuff below, so ive commented it out for now, but we can put it back in if/when we use it
\iffalse
$\ell_2$ norms of convolutions are often referred to as additive energies. In general, for any set $\cS$, 

\begin{align*}
\|\cS^{(k)}\|_2^2& =|\{ s_1,\dots,s_{2k}\in \cS \ : \ s_1+\dots+s_k=s_{k+1}+\dots+s_{2k}\}|\\
&=:E_{k}(\cS).
\end{align*}
It will be convenient to generalise the above notation to other weight functions. In particular, we write
\begin{align*}
\|(\alpha*\alpha)\|_2^2=:E_{+}(\alpha).
\end{align*}
\fi

\section{Statements of results}

\label{sec:results}

\subsection{Points on polynomial curves in boxes } 
Given some finite set $\cS \subseteq \F_q[T]^2$ and some curve $\Phi(x,y) \in \F_q[T][x,y]$, we will first consider the problem of bounding 
$$C_{F, \Phi}\left(\cS\right) = \left|\{(x,y) \in \cS : \Phi(x,y) \equiv 0 \Mod{F}\}\right|$$
in a few special cases. 
This problem, and related problems, have been considered by a number of authors such as in \cite{CS2013, O2019, S2017} and of course, there are many results regarding similar questions over the integers. Here we utilize techniques outlined in \cite{MK2021}. 

We first have the following result. 
\begin{thm}\label{thm:curveinbox_1}
Suppose $\textup{char}(\F_q) > d$. Let $F \in \F_q[T]$ of degree $r$ and let $m \leq r$ denote a positive integer.  Let 
$$\Phi(x,y) = a_dx^d +...+ a_1x + a_0 - y\in \F_q[T][x, y]$$
with $a_d$ coprime to $F$, and let $s_1, s_2 \in \F_q[T]$. Then we have 
\begin{align*}
    C_{F, \Phi}(\cI_m(s_1, s_2)) 
    &\leq q^{m + 2m/(d^2+d) - 2r/(d^2+d) + o(m)} +  q^{m/d + o(m)}. 
\end{align*}
\end{thm}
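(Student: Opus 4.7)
The strategy is to translate admissible solutions into lattice points on the moment curve inside an appropriate $\F_q[T]$-lattice, combine the function field Henk inequality from Section~\ref{sec:Counting_and_localtoglobal} with a Lagrange interpolation identity that lifts the mod-$F$ constraint to an equation over $\F_q[T]$ in the favourable regime, and appeal to a Bombieri--Pila-type bound for the degenerate regime to produce the secondary term.

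First I would translate by $s_1$ to reduce to the case $s_1 = 0$; the substitution $x \mapsto x - s_1$ preserves the degree and leading coefficient of $\Phi$, so $a_d$ remains coprime to $F$. Since $m \leq r$, for each admissible $x \in \cI_m$ the partner $y \in \cI_m(s_2)$ is unique, so $N := C_{F,\Phi}(\cI_m(s_1, s_2))$ equals the size of the admissible set $\cX \subseteq \cI_m$. Introduce the lattice
\begin{align*}
    \cL = \left\{(v_1, \ldots, v_d, w) \in \F_q[T]^{d+1} :\ \sum_{j=1}^d a_j v_j \equiv w \Mod F\right\}
\end{align*}
of covolume $q^r$. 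Each $x \in \cX$ gives a moment-curve point $(x, x^2, \ldots, x^d, y - s_2 - a_0) \in \cL$, whose $k$-th coordinate lies in a ball of radius $q^{km}$ and whose last coordinate lies in a ball of radius $q^m$. A key structural feature exploited throughout this paper is that balls in $\Ki$ are additive subgroups, so differences of two such points lie in the symmetric box $B$ with these dimensions.

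The core of the argument is the Lagrange interpolation identity: for any $d+1$ admissible tuples $(x_i, y_i)_{i=0}^d$, using $\sum_i (-1)^i V_i = 0$ to recenter the $y_i$ at $s_2$,
\begin{align*}
    a_d\, V(x_0, \ldots, x_d) \equiv (-1)^d \sum_{i=0}^d (-1)^i (y_i - s_2)\, V_i \Mod F,
\end{align*}
with $V = \prod_{i<j}(x_j - x_i)$ and $V_i = V(x_0, \ldots, \hat x_i, \ldots, x_d)$. The left hand side has degree at most $\deg a_d + md(d+1)/2$ and the right hand side has degree at most $m + md(d-1)/2$, so when $m$ is roughly below $2r/(d^2 + d)$ both sides have degree strictly less than $r$ and the congruence becomes a polynomial identity over $\F_q[T]$. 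I would then translate this identity into an extra constraint on admissible $(d+1)$-tuples, recast it as a lattice point count in a sublattice of $\F_q[T]^{d+1}$ whose box dimensions match the Vandermonde minors, and apply the function field Henk estimate together with the equality form of Minkowski's second theorem in $\Ki^{d+1}$. Extracting the $(d+1)$-th root of the resulting bound recovers the first term $q^{m + 2(m-r)/(d(d+1)) + o(m)}$.

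The second term $q^{m/d + o(m)}$ accounts for the degenerate case: when the Vandermonde $V$ vanishes modulo $F$, so that two of the $x_i$ coincide modulo the relevant factors of $F$, the Lagrange lifting collapses, and one instead invokes the function field analogue of the Bombieri--Pila bound on the number of $x \in \cI_m$ with $(x, P(x) \bmod F)$ lying on an algebraic curve of degree $d$, which contributes at most $\ll q^{m/d + o(m)}$ points. The main obstacle I anticipate is extracting the sharp exponent $2/(d^2 + d)$: this demands the refined lattice-point count from Section~\ref{sec:Counting_and_localtoglobal} (whose improvement over the analogue \eqref{eq:Henk} of Henk's estimate stems precisely from boxes being additive groups in $\Ki^d$, a feature unavailable over $\R$) combined with the equality form of the function field Minkowski second theorem, and a careful pigeonhole argument that concentrates $\cX$ in a sub-box where the Lagrange identity actually lifts to an equation before the two regimes are balanced.
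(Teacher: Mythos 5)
Your overall shape---set up a lattice supported on the moment curve, split into a ``dense'' and ``sparse'' regime, and invoke a Bombieri--Pila type bound for the secondary term $q^{m/d+o(m)}$---matches the paper, and the sparse-case lifting idea is in the right spirit. However, the mechanism you propose for the main term $q^{m+2m/(d^2+d)-2r/(d^2+d)+o(m)}$ does not work and is not what the paper does.

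The critical gap is the Lagrange--Vandermonde step. The identity
\[
a_d\,V(x_0,\dots,x_d)\equiv (-1)^d\sum_{i=0}^d(-1)^i(y_i-s_2)V_i \Mod F
\]
cannot be lifted to an equation over $\F_q[T]$ in any useful regime, because $a_d$ is an arbitrary residue modulo $F$: even after reducing it to a representative of degree $<r$, the left-hand side $a_d\,V(x_0,\dots,x_d)$ can have degree up to $(r-1)+md(d+1)/2$, which already exceeds $r$ for every $m\geq 1$. So ``both sides have degree strictly less than $r$'' is false for the left-hand side unless one first replaces $a_d$ by a small multiplier, which is exactly the dual-lattice mechanism the paper uses in its \emph{other} case (Case~II), not for the main term. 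Moreover, even granting a lifted equation on admissible $(d+1)$-tuples, extracting the $(d+1)$-th root could at best yield an exponent $\sim 1/(d+1)$, not $2/(d^2+d)=2/(d(d+1))$; you would be off by roughly a factor of $d/2$ in the root being taken.

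The paper's main term is obtained by an entirely different ingredient that is absent from your proposal: Wooley's function field Vinogradov mean value theorem (Lemma~\ref{lem:wooley_VMVT} and Corollary~\ref{cor:wooley}). In Case~I ($\sigma_{d+1}\le 1$), Lemma~\ref{lem:last} with $\beta=\infty$, $k=d(d+1)$ gives $|\cS|^{k}\le q^{d+1}\|\cS_0^{(k)}\|_\infty\,\vol\cB/\det\cL$, and VMVT supplies $\|\cS_0^{(k)}\|_\infty\le q^{o(m)}|\cS|^{k/2}$; extracting the $k/2=d(d+1)/2$-th root then produces the exponent $2/(d^2+d)$. This is also where the hypothesis $\mathrm{char}(\F_q)>d$ is actually used, something your write-up never accounts for. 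Finally, the case split in the paper is on whether $\sigma_{d+1}\le 1$ for the lattice $\cL$ against the weighted box $\cB$, not on Vandermonde degeneracy; in the sparse case ($\sigma_{d+1}>1$) Lemma~\ref{lem:Mahler_dual} gives a short dual vector $(z,g_1,\dots,g_d)$ with $a_iz\equiv g_i\Mod F$, and the linear congruence lifts to $zy+g_0+\cdots+g_dx^d=0$, to which Lemma~\ref{lem:Bombieri_pila} applies directly.
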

In particular, we note that if $m < 2r/(d^2+1)$ then this implies 
\begin{align}\label{eq:curveinbox_1}
    C_{F, \Phi}(\cI_m(s_1, s_2)) 
    \leq  q^{m/d + o(m)}. 
\end{align}
 In \cite[Theorem 2]{CS2013} the bound 
\begin{align}\label{eq:shpar_cill}
    C_{F, \Phi}(\cI_m(s_1, s_2))  &\leq q^{m-m/2^{d-1}+o(m)} + q^{m-(r-m)/2^{d-1}+ o(m)}
\end{align}
is given, but they remark that perhaps one may expect the bound (\ref{eq:curveinbox_1}). Theorem {\ref{thm:curveinbox_1}} always improves upon (\ref{eq:shpar_cill}) for $d\geq 5 $ and when $m< 2r/5, 2r/5$ or $4r/9$ when $d=2,3$ or $4$, respectively. 

A special case of our next result is equivalent to counting points on elliptic curves in certain shifted subspaces of residue rings. We again utilize techniques outlined in \cite{MK2021}. 
\begin{thm}\label{thm:curveinbox_2}
Suppose $\textup{char}(\F_q) > 3$. Let $F \in \F_q[T]$ of degree $r$ and let $m\leq r$ denote a positive integer. Let 
$$\Phi(x,y) = a_3x^3 + a_2x^2 + a_1x + a_0 - y^2 \in \F_q[T][x, y]$$  with $a_3$ coprime to $F$, and  let $s_1, s_2 \in \F_q[T]$. Then for any $\epsilon > 0$, if $m < r(1/2 - \epsilon)$ we have
\begin{align*}
  C_{F, \Phi}(\cI_m(s_1, s_2))
   &\leq q^{3m/2-r/6 + o(m)} +  q^{m/3 + o(m)},   
\end{align*}
where the $o(m)$ term may depend on $\epsilon$. 
\end{thm}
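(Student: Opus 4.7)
The plan is to adapt the method used for Theorem~\ref{thm:curveinbox_1} to accommodate the quadratic appearance of $y$ in $\Phi$. By substituting $x\mapsto x+s_1$ and $y\mapsto y+s_2$ the polynomial retains its form, with $a_3$ still coprime to $F$, so I may reduce to the case $s_1=s_2=0$. Writing $g(x)=a_3x^3+a_2x^2+a_1x+a_0$, the goal becomes to bound
\begin{equation*}
N:=\left|\{(x,y)\in\cI_m\times\cI_m:\ y^2\equiv g(x)\Mod{F}\}\right|.
\end{equation*}

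For the second term $q^{m/3+o(m)}$ I would proceed directly in the spirit of Theorem~\ref{thm:curveinbox_1}: use the function-field geometry of numbers from Section~\ref{sec:Counting_and_localtoglobal} to produce a small $\alpha\in\F_q[T]$ such that $\alpha y$ reduces to a small residue $\beta$ modulo $F$, square to get $\alpha^2 g(x)\equiv\beta^2\Mod{F}$, and invoke the hypothesis $m<r(1/2-\epsilon)$ together with the degree bounds on $\alpha,\beta$ to lift this congruence to an equality in $\F_q[T]$. The lifted equation is cubic in $x$, so its solutions are controlled by the results of Section~\ref{sec:counting_global}, and summing over admissible $(\alpha,\beta)$ produces the claimed contribution.

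For the first term $q^{3m/2-r/6+o(m)}$ I plan a second-moment argument. Since $N^2\le E$, where
\begin{equation*}
E:=\left|\{(x_1,x_2,y_1,y_2)\in\cI_m^4:\ y_1^2-y_2^2\equiv g(x_1)-g(x_2)\Mod{F}\}\right|,
\end{equation*}
it suffices to show $E\le q^{3m-r/3+o(m)}$. Factoring both sides yields the bilinear congruence
\begin{equation*}
(y_1-y_2)(y_1+y_2)\equiv (x_1-x_2)\,h(x_1,x_2)\Mod{F},
\end{equation*}
with $h(x_1,x_2)=a_3(x_1^2+x_1x_2+x_2^2)+a_2(x_1+x_2)+a_1$ of degree at most $2m+O(1)$ on $\cI_m^2$. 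Under $m<r(1/2-\epsilon)$ the degrees of both sides are strictly smaller than $r$, so this congruence again lifts to an equality in $\F_q[T]$. I then split by the common value $D=(x_1-x_2)\,h(x_1,x_2)\in\F_q[T]$ and estimate separately: the number of factorisations $D=UV$ with $\deg U,\deg V<m$ via the function-field divisor bound from Section~\ref{sec:counting_global}, and the number of pairs $(x_1,x_2)\in\cI_m^2$ giving rise to a prescribed $D$ via the function-field Henk-type estimate~\eqref{eq:Henk} from Section~\ref{sec:Counting_and_localtoglobal}.

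The main obstacle will be this bookkeeping step: organising the quadruples by the degree distribution of the factors $(y_1-y_2,\ y_1+y_2)$ and of $(x_1-x_2,\ h)$, and applying the sharp function-field lattice-point count to each stratum so that the $-r/3$ exponent emerges after summation. This is the point at which the extra precision of the $\Ki^d$ estimates over their $\R^d$ counterparts (stemming from the fact that convex bodies in $\Ki^d$ are additive subgroups) is crucial; a cruder bound would only reproduce the trivial average $q^{4m-r}$ in place of the desired $q^{3m-r/3}$.
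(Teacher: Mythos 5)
Your plan for the first term has a fatal flaw. You propose to use the elementary inequality $N^2\le E$, where $E$ counts quadruples $(x_1,x_2,y_1,y_2)\in\cI_m^4$ with $y_1^2-y_2^2\equiv g(x_1)-g(x_2)\Mod F$, and to show $E\le q^{3m-r/3+o(m)}$. But $E$ contains the whole diagonal $x_1=x_2$, $y_1=y_2$, so $E\ge q^{2m}$ unconditionally; in the regime $m<r/3$ where the theorem is non-trivial one has $q^{3m-r/3}<q^{2m}$, so your target is provably unachievable, and in any case $N^2\le E$ cannot give anything better than the trivial bound $N\le q^m$. No amount of careful stratification by factorizations or sharp lattice counts can salvage a second-moment argument here, because the bottleneck is the diagonal, not the bookkeeping. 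The paper sidesteps this by going to a sixth moment: it first uses Lemma~\ref{lem:coppersmith} (which is where the hypothesis $m<r(1/2-\epsilon)$ actually enters, to bound the number of $y$ per $x$), then applies Cauchy--Schwarz against the Vinogradov structure from Lemma~\ref{lem:wooley_VMVT}/Corollary~\ref{cor:wooley} to show $|\cS|^6\le q^{o(m)}|\cL\cap\cB|$ for the five-dimensional modular lattice $\cL$ in the unknowns $(x,x^2,x^3,y,y^2)$ and the body $\cB$ with $\vol\cB/\det\cL=q^{9m-r}$; Lemma~\ref{lem:largest} then yields the exponent $3m/2-r/6$. The jump to a sixth moment via a mean-value theorem is the essential idea you are missing.

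For the second term the rough idea is on track --- use the dual of a modular lattice to obtain small coefficients, lift to an equation, and count. But the details diverge in a way that matters: the paper produces a \emph{single} small point of the dual lattice $\cL^*$ inside $\cB^*$, valid simultaneously for every solution, which turns the congruence into the equation $z_2y^2+z_1y+g_3x^3+g_2x^2+g_1x=0$ over $\F_q[T]$ and then applies Lemma~\ref{lem:Bombieri_pila} to a degree-3 plane curve to get $q^{m/3+o(m)}$. Your squaring step $\alpha^2g(x)\equiv\beta^2$ produces a univariate polynomial of degree $3$ in $x$ (hence $O(1)$ solutions), and the subsequent ``sum over admissible $(\alpha,\beta)$'' is not needed and reflects a misunderstanding of how the dual lattice is deployed; moreover squaring discards the two-variable structure on which Lemma~\ref{lem:Bombieri_pila} operates. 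You would do better to keep $y$ linear and quadratic as separate lattice coordinates, exactly as the paper does.
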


Lemma~\ref{lem:coppersmith} implies a trivial bound of $O(q^m)$. Theorem \ref{thm:curveinbox_2} improves  upon this for $m < r/3$. 

\begin{rem}
Both Theorems \ref{thm:curveinbox_1} and \ref{thm:curveinbox_2} rely on an analogue of Vinogradov's mean value theorem given by T. Wooley in \cite{W2019}. This result, as is, requires large characteristic which is why there is a restriction on characteristic in our results currently. Although, it seems work is currently in progress by T. Wooley and Y. R. Liu to allow this analogue of Vinogradov's mean value theorem to work in any characteristic. This would immediately allow our results to hold for any characteristic. 
\end{rem}

\subsection{Kloosterman equations}
Next, given some positive integer $k$ and some finite $\cS \subseteq \F_q[T]$ we let 
\begin{align}
\label{eq:inv-def}
&E_{F, k}^\inv(\cS)\\ 
&~~= \left|\{(x_1,...,x_k) \in \cS^k : \ov{x_1} + ... + \ov{x_k} \equiv \ov{x_{k+1}} + ... + \ov{x_{2k}} \Mod{F}\}\right|. \nonumber
\end{align}

Our first result for modular inverses may be considered a function field version of a bound due to Bourgain and Garaev~\cite{BourgainGaraev2014}.
\begin{thm}\label{thm:sums_of_inverses}
Let $F \in \F_q[T]$ of degree $r$. Let $m$ and $k$ be positive integers with $k$ fixed and $m \leq r$. Then 
$$E_{F,k}^\inv\left(\cI_m\right) \leq q^{km + o(m)} + q^{m(3k-1)-r + o(m)}. $$
\end{thm}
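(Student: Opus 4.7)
The plan is to adapt the Bourgain--Garaev approach~\cite{BourgainGaraev2014}. First I would isolate the pair $(x_k, x_{2k})$ and decompose
\[
E_{F,k}^{\inv}(\cI_m) = q^m E_{F,k-1}^{\inv}(\cI_m) + \sum_{\mu \neq 0} T(\mu) N(\mu),
\]
where $T(\mu) = |\{(x_1, \ldots, x_{k-1}, x_{k+1}, \ldots, x_{2k-1}) \in \cI_m^{2k-2} : \sum_{i=k+1}^{2k-1} \ov{x_i} - \sum_{i=1}^{k-1} \ov{x_i} \equiv \mu \Mod{F}\}|$ counts the $2k-2$ ``fixed'' variables and $N(\mu) = |\{(x, y) \in \cI_m^2 : \ov{x} - \ov{y} \equiv \mu \Mod{F}\}|$ counts the remaining pair. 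The diagonal $\mu = 0$ satisfies $N(0) = q^m$ and $T(0) = E_{F,k-1}^{\inv}(\cI_m)$; by induction on $k$ (with base case $E_{F, 1}^{\inv}(\cI_m) = q^m$) this is bounded by $q^{km + o(m)} + q^{(3k-3)m - r + o(m)}$, which is absorbed into the two claimed terms.

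For $\mu \neq 0$ I would apply Mahler's analogue of Minkowski's first theorem (Section~\ref{sec:lattices}) to the lattice $\{(u, v) \in \F_q[T]^2 : u \equiv \mu v \Mod{F}\}$ to find a nonzero $(a, b)$ with $a \equiv \mu b \Mod{F}$ and $\deg a + \deg b \leq r$. Multiplying $\ov{x} - \ov{y} \equiv a \ov{b} \Mod{F}$ through by $bxy$ yields $axy + b(x - y) \equiv 0 \Mod{F}$; in the appropriate degree range (roughly $m < r/3$, balancing $\deg a + 2m < r$ and $\deg b + m < r$) this lifts to the polynomial identity $axy + b(x - y) = 0$, which factors as $(ax - b)(ay + b) = -b^2$. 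The number of $(x, y)$ pairs is then bounded by the function-field divisor function $\tau(b^2) \leq q^{o(m)}$, since the Minkowski-balanced $\deg b$ is $O(m)$.

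The hard part will be converting the off-diagonal sum $\sum_{\mu \neq 0} T(\mu) N(\mu)$ into the sharp bound $q^{km + o(m)} + q^{(3k-1)m - r + o(m)}$: a naive combination of the pointwise estimate $N(\mu) \leq q^{o(m)}$ with $\sum_{\mu} T(\mu) = q^{(2k-2)m}$ yields only $q^{(2k-2)m + o(m)}$, which is weaker than what is claimed for $k \geq 3$. To overcome this, I would group the $\mu$ according to their Minkowski representatives $(a, b)$ and invoke the function-field Henk-type lattice point count developed in Section~\ref{sec:Counting_and_localtoglobal}, applied to a higher-dimensional lattice that simultaneously encodes the congruence and the size constraints on the $x_i$. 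Crucially, since convex bodies in $\F_q((1/T))^d$ are additive subgroups, this count is strictly sharper than the corresponding bound over $\mathbb{R}^d$, and it produces the structured contribution $q^{km + o(m)}$ in the small-$m$ regime; the residual contribution $q^{(3k-1)m - r + o(m)}$ then arises as the Plancherel-type density estimate once $m$ exceeds the lifting threshold, with the crossover between the two regimes occurring near $m = r/(2k-1)$.
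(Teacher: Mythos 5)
Your diagonal term and induction step are fine, and the Minkowski--then--lift argument giving $N(\mu)\le q^{o(m)}$ is also sound in the range where the lift is valid. But the proposal has a genuine gap exactly where you flag it: you never actually produce the sharp bound for $\sum_{\mu\neq 0}T(\mu)N(\mu)$, and the sketch you give for closing it does not constitute an argument. Two concrete problems. First, the pointwise bound $N(\mu)\le q^{o(m)}$ requires lifting $b(y-x)\equiv axy\pmod F$ to an equation, which forces roughly $m<r/3$ regardless of how you balance $\deg a$ and $\deg b$; for $k=2$ the nontrivial range of the theorem extends to $m<r/2$, so the method simply has no output on $r/3\lesssim m<r/2$. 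Second, for $k\ge 3$, multiplying the pointwise bound by $\sum_\mu T(\mu)=q^{(2k-2)m}$ gives only $q^{(2k-2)m+o(m)}$, which overshoots $q^{km+o(m)}$; you say so yourself. The proposed repair --- ``group the $\mu$ by Minkowski representatives'', ``invoke the Henk-type count on a higher-dimensional lattice'', ``Plancherel-type density estimate'' --- names tools from Section~\ref{sec:Counting_and_localtoglobal} without specifying the lattice, the body, which lemma applies, or why the output is $q^{km}$ rather than $q^{(2k-2)m}$. It is a plan, not a proof.

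The paper does something structurally different, and more symmetric: it never isolates the pair $(x_k,x_{2k})$. Instead it writes $E_{F,k}^{\inv}(\cI_m)\le\sum_{\lambda\in\Omega}I_{F,\lambda,k}(\cI_m)^2$, lifts the single congruence $\ov{x_1}+\cdots+\ov{x_k}\equiv\lambda$ by clearing denominators to the two-dimensional condition $(x_1\cdots x_k,\ \sum_i\prod_{j\neq i}x_j)\in\cL_\lambda\cap\cB$ where $\cL_\lambda=\{(x,y):\lambda x\equiv y\ (F)\}$ and $\cB=\{|x|\le q^{mk},\ |y|\le q^{m(k-1)}\}$, and then splits on $\sigma_{2,\lambda}$. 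When $\sigma_{2,\lambda}>1$, Lemma~\ref{lem:first} reduces the count to solutions of $\sum 1/x_i=\sum 1/y_i$ over $\F_q(T)$, handled by Lemma~\ref{lem:SOI:inverses_equality} to give $q^{km+o(m)}$. When $\sigma_{2,\lambda}\le1$, Lemma~\ref{lem:first-last} gives the volumetric factor $q^{m(2k-1)-r}$ times the same equation count, i.e.\ $q^{m(3k-1)-r+o(m)}$. The key machinery (Lemmas~\ref{lem:first} and~\ref{lem:first-last}) is tailored precisely so that the rank-one and full-rank cases of the lattice family yield the two terms of the theorem; your pair-isolation decomposition does not interface with that machinery, which is why the ``hard part'' resists the treatment you outline. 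If you want to pursue your route, you would essentially need to rebuild an analogue of that lemma pair adapted to the $T(\mu)N(\mu)$ split, which is harder, not easier, than the symmetric setup.
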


We note that this improves upon the trivial bound  $q^{m(2k-1)}$ when $m < r/k$. But, in the case of $k=2$ and $F$ irreducible, this never improves upon 
\begin{align}\label{eq:old_energy_inverses_bound}
    E_{F,2}^\inv\left(\cI_m\right) \leq q^{7m/2 - r/2 + o(m)} + q^{2m + o(m)} 
\end{align}
given in \cite[Theorem 2.2]{BS2022}. 

Next, the question of bounding $E^\inv_{F,k}\left( \cI_m(s) \right)$ is considered in~\cite{SZ2018}, where they achieve a bound stronger than a direct extension of what is best known over the integers \cite[Theorem 4]{BourgainGaraev2014}. We can go further by improving upon~\cite[Theorem~1.1]{SZ2018}. 

\begin{thm}\label{thm:sums_of_inverses-general}
Let $F \in \F_q[T]$ of degree $r$ be irreducible and $s\in \F_q[T]$ be arbitrary. Let $m$ and $k$ be positive integers with $k \geq 2$ fixed and 
\begin{align}
\label{eq:kloostermanpar}
m<  \left(1-\frac{3k}{2r} \right)\frac{r}{4k^2-4k+5}.
%\left(\frac{2k-2}{2k-1}\right)\frac{r}{4k^2-3k-1} 
\end{align}
Then 
$$E^\inv_{F,k}\left( \cI_m(s) \right) \leq q^{km+o(m)}. $$
\end{thm}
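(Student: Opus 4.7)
The strategy is to adapt the approach of Shparlinski--Zumalac\'arregui~\cite{SZ2018}, replacing their lattice estimates with the sharper geometry of numbers machinery in $\Ki^d$ developed in Section~\ref{sec:Counting_and_localtoglobal}, and then count the resulting equation solutions via the symmetric structure.

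Write $x_j = s + y_j$ with $y_j \in \cI_m$. Since each $x_j$ must be coprime to $F$ for $\overline{x_j}$ to be defined, the product $\prod_{i=1}^{2k}(s+y_i)$ is coprime to $F$, and multiplying the defining congruence through by this product converts it into the polynomial congruence
\begin{equation*}
P_s(y_1,\dots,y_{2k}) := \sum_{j=1}^{k}\prod_{i\ne j}(s+y_i) - \sum_{j=k+1}^{2k}\prod_{i\ne j}(s+y_i) \equiv 0 \pmod{F} .
\end{equation*}
Viewed over $\F_q(T)$, the equation $P_s = 0$ is equivalent to the rational identity $\sum_{j=1}^{k}(s+y_j)^{-1} = \sum_{j=k+1}^{2k}(s+y_j)^{-1}$, which, via uniqueness of partial fractions over $\overline{\F_q}(T)$, forces $(s+y_1,\dots,s+y_k)$ to be a permutation of $(s+y_{k+1},\dots,s+y_{2k})$. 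Consequently the equation $P_s = 0$ in $\F_q[T]^{2k}$ admits at most $k!\,q^{km} + O(q^{(k-1)m})$ solutions with $y_j \in \cI_m$, comfortably matching the target bound $q^{km+o(m)}$. Thus it suffices to show that under hypothesis~\eqref{eq:kloostermanpar} the congruence $P_s(\mathbf{y}) \equiv 0 \pmod F$ implies the equation $P_s(\mathbf{y}) = 0$ in $\F_q[T]$.

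For this lifting I would apply the Coppersmith/Minkowski-type lemma in $\Ki^d$ developed in Section~\ref{sec:Counting_and_localtoglobal}. One forms the lattice $\cL \subset \F_q[T]^N$, where $N$ is the number of monomials arising in $P_s$, consisting of those coefficient vectors that give a polynomial vanishing modulo $F$ on the full box $\cI_m^{2k}$. The function field successive minima estimates, together with the sharper analogue of Henk's bound~\eqref{eq:Henk} proved in Section~\ref{sec:Counting_and_localtoglobal}, produce a nonzero short vector in $\cL$ whose weighted degree (with weights given by the degree budgets of each monomial evaluated on $\cI_m^{2k}$) is strictly less than $r = \deg F$; by degree reasons this forces $P_s$ itself to vanish in $\F_q[T]$ rather than merely modulo $F$, completing the lift.

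The hard part will be the numerical calibration of this lifting step. The denominator $4k^2 - 4k + 5 = (2k-1)^2 + 4$ in~\eqref{eq:kloostermanpar} should emerge from balancing the degree $2k-1$ of $P_s$ against the saving provided by the successive minima in dimension proportional to $2k$, while the corrective term $3k/(2r)$ absorbs the degree overhead from the shift $s$ (of arbitrary degree) and from handling the negligible exceptional tuples. Carefully tracking these competing degree constraints --- in particular verifying that the weighted degree of the short lattice vector stays below $r$ precisely when $m$ satisfies~\eqref{eq:kloostermanpar} --- is where the bulk of the technical work will lie; once this is done, the combinatorial count of solutions to $P_s = 0$ described above delivers the bound $q^{km+o(m)}$.
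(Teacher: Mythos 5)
Your high-level outline matches the paper's starting point (work with the polynomial $P_s$ associated to each solution, and cite~\cite{SZ2018} for the reduction), but the proposed lifting step is the wrong mechanism and would fail. You want to lift $P_s(\mathbf y)\equiv 0 \Mod F$ directly to $P_s(\mathbf y)=0$ in $\F_q[T]$ by exhibiting a short lattice vector of weighted degree less than $r$. The obstruction is that the shift $s$ is \emph{arbitrary} — it can have degree as large as $r-1$ — and $s$ sits inside the coefficients of $P_s(y_1,\dots,y_{2k})$. Consequently, for a solution $\mathbf y\in\cI_m^{2k}$, the value $P_s(\mathbf y)\in\F_q[T]$ has degree roughly $(2k-1)\deg s$, which can vastly exceed $r$; a short auxiliary vector in your coefficient lattice (whatever its exact construction) cannot force $P_s(\mathbf y)=0$, because $P_s$ itself is not short. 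Moreover, the lattice you propose — ``coefficient vectors that give a polynomial vanishing modulo $F$ on the full box $\cI_m^{2k}$'' — is not the object one wants: that set is not a lattice in any useful sense, and membership does not relate naturally to the solution set of the congruence.

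The paper circumvents this precisely by transposing the roles: it views the same expression as a polynomial $P_x(Z)$ in an \emph{auxiliary} variable $Z$, with coefficients that are elementary symmetric functions of the small quantities $x_1,\dots,x_{2k}$ (hence of controlled degree), and with the large element $s$ appearing only as a root modulo $F$: $P_x(s)\equiv 0 \Mod F$. Then one builds the lattice of coefficient vectors $(y_0,\dots,y_{2k-1})$ with $y_0+\dots+y_{2k-1}s^{2k-1}\equiv 0 \Mod F$, shows via a pigeonhole and divisor-counting argument that assuming $E^*$ is large forces many lattice points in a suitable convex body, and applies Lemma~\ref{lem:17} (with $\beta = 1/2$) to extract a short auxiliary polynomial $Q(Z)$ with $Q(s)\equiv 0 \Mod F$. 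The quantity that lifts from congruence to equality is not $P_s(\mathbf y)$ but the \emph{resultant} $\Res(Q,P_x)$ — this genuinely is small, by Lemma~\ref{lem:resultantbound}, so $\Res(Q,P_x)\equiv 0\Mod F$ becomes $\Res(Q,P_x)=0$. This yields a common root $\beta\in\overline{\F_q}(T)$ of $Q$ and $P_x$, giving the rational identity $\sum 1/(x_j+\beta) = \sum 1/(x_j+\beta)$ over $\overline{\F_q}(T)$, now with a fixed algebraic $\beta$ independent of $\mathbf x$, and Lemma~\ref{lem:SOI:inverses_equality} counts such tuples by $q^{km+o(m)}$. Your partial-fractions observation is the content of that final counting lemma, but it must be applied at one of the finitely many roots $\beta$ of the small proxy polynomial $Q$, not at $s$ itself. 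Restructuring your argument around this auxiliary polynomial $P_x(Z)$ and the resultant lifting would close the gap.
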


In the case of $k=2$, ~\eqref{eq:old_energy_inverses_bound} can be generalized to arbitrary intervals, and this would always be better than Theorem \ref{thm:sums_of_inverses-general}. But for $k > 2$, Theorem \ref{thm:sums_of_inverses-general} directly improves upon \cite[Theorem 1.1]{SZ2018} and of course is always non-trivial.

\subsection{Sums of modular square roots }
%In \cite[Theorem 2.1]{BS2022}, it is shown that 
%\begin{align*}
  %  E_{q,r}^{\sqrt}(\balpha) \leq \|\balpha\|_1^2\|\balpha\|_\infty^2q^{m/2 + o(m)}\big{(}q^{m-r/2}+1\big{)}
%\end{align*}
%which in particular implies 
%\begin{align}\label{eq:old_energy_bound}
%    E_{q,r}^{\sqrt}(\bm{1}_m) \leq q^{o(m)}\big{(}q^{7m/2-r/2}+q^{5m/2}\big{)}
%\end{align}
Throughout this section,  we will let 
$$\balpha = (\alpha(x))_{x \in \F_q[T]/\langle F \rangle}$$
denote a sequence of complex weights. Although formally this will be a sequence of weights on $\F_q[T]/\langle F \rangle$ for the sake of norms, we can of course define the value of $\alpha(x)$ for any $x \in \F_q[T]$ by composing with the canonical quotient map $\F_q[T] \to \F_q[T]/\langle F \rangle$. With this in mind, given a positive integer $m \leq r$, we will have $\balpha$ satisfy 
\begin{align}\label{Weights}
    \textup{supp}(\alpha) \subseteq \{\deg_Fx < m\}.
\end{align}
As a special case we will set $\bm{1}_{m}$ as the characteristic function of the set $\{\deg_Fx < m\}$. Now for any $\balpha$ as above, we define 
 $$ E_{F,k}^{\sqrt}(\balpha) = \smashoperator[r]{\sum_{\substack{(x_1,...,x_{2k}) \in \cI_r^k \\  x_1+...+x_k \equiv x_{k+1}+...+x_{2k} (F)}}}\:\:\:\:\alpha({x_1^2})\ov{\alpha({x_2^2})}\cdots\alpha({x_{2k-1}^2})\ov{\alpha({x_{2k}^2})}.$$
Perhaps the most interesting scenario is 
 \begin{align*} 
E_{F,k}^{\sqrt}(\bm{1}_m)=  \big{|}\{(x_1,...,x_{2k}) \in \cI_r^{2k}: ~  x_1 + ... + x_k &\equiv x_{k+1} +...+x_{2k} , \\
 & \qquad   \deg_F(x_i^2) < m\}\big{|}. 
\end{align*}  

We have borrowed this notation from \cite{BS2022, DKSZ2020, KSSZ2021}, where this quantity is referred to as the \textit{additive energy of modular roots} in the case of $k=2$. Non-trivial bounds for $E_{F, 2}^{\sqrt}(\balpha)$ are given in \cite{BS2022}, and analogues of this problem over the integers have been investigated in \cite{DKSZ2020, KSSZ2021}. Taking inspiration from these, we have the following.

\begin{thm}\label{thm:energy_bound}
Let $q$ be odd and $F \in \F_q[T]$ be irreducible of degree $r$. For any integer $m \leq r$ and a weight $\balpha$ on $\F_q[T]/\langle F \rangle$ as in (\ref{Weights}) we have 
$$
 E_{F, 2}^{\sqrt}(\balpha) \leq \|\balpha\|_1^{4/3}\|\balpha\|_\infty^{8/3}q^{m + o(m)}(q^{7m/6 - r/2} + 1).
 $$
\end{thm}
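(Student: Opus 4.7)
The plan is to first reduce to the case $\alpha \geq 0$ via the triangle inequality $|E^\sqrt_{F, 2}(\balpha)| \leq E^\sqrt_{F, 2}(|\balpha|)$, and then exploit the non-uniform structure of $\balpha$ through a dyadic decomposition. Let $A_k := \{y : q^k \leq \alpha(y) < q^{k+1}\}$ and write $\balpha = \sum_k \balpha_k$ with $\balpha_k \asymp q^k \one_{A_k}$ (only $O(m)$ dyadic levels are non-empty). By multilinearity,
$$E^\sqrt_{F, 2}(\balpha) = \sum_{k_1, k_2, k_3, k_4} E^\sqrt_{F, 2}(\balpha_{k_1}, \balpha_{k_2}, \balpha_{k_3}, \balpha_{k_4}),$$
and each piece is bounded trivially by $\prod_i q^{k_i} \cdot N(A_{k_1}, A_{k_2}, A_{k_3}, A_{k_4})$, where $N(A_1, \ldots, A_4)$ denotes the number of tuples $(x_1, \ldots, x_4) \in \cI_r^4$ with $x_1+x_2 \equiv x_3+x_4 \pmod F$ and $x_i^2 \bmod F \in A_i$.

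The heart of the argument is then a uniform bound
$$N(A_1, A_2, A_3, A_4) \leq (|A_1||A_2||A_3||A_4|)^{1/3} \cdot q^{m+o(m)}(q^{7m/6 - r/2} + 1)$$
for arbitrary $A_i \subseteq \cI_m$. To prove this I would follow the approach of~\cite{BS2022}: first, since $\deg x_i < r$ the congruence $x_1+x_2 \equiv x_3+x_4 \pmod F$ is actually a polynomial identity, and squaring it gives
$$2(x_1 x_2 - x_3 x_4) \equiv (y_3+y_4) - (y_1+y_2) \pmod F, \qquad y_i := x_i^2 \bmod F \in A_i,$$
whose right-hand side has degree $<m$. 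Combined with $(x_1x_2)^2 \equiv y_1 y_2 \pmod F$ and its analogue for $x_3x_4$, this pins the count to small-degree points on a quadratic curve modulo $F$. I would then apply the lattice framework of Section~\ref{sec:Counting_and_localtoglobal}, in particular the function-field analogue of Henk's estimate~\eqref{eq:Henk}, to count such points; the $q^{7m/6-r/2}$ term arises from the non-degenerate regime of the relevant lattice's successive minima in $\Ki^d$, while $+1$ is the degenerate fallback. The fractional $(|A_i|)^{1/3}$ comes out of a multilinear Cauchy–Schwarz executed within the lattice step.

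Inserting the uniform estimate into the dyadic expansion yields
$$E^\sqrt_{F, 2}(\balpha) \leq q^{m+o(m)}(q^{7m/6-r/2}+1) \prod_{i=1}^{4} \sum_{k_i} q^{k_i} |A_{k_i}|^{1/3}.$$
H\"older's inequality with exponents $(3, 3/2)$ then controls each inner factor via
$$\sum_k q^k |A_k|^{1/3} \leq \left(\sum_k q^k |A_k|\right)^{1/3}\left(\sum_k q^k\right)^{2/3} \ll \|\balpha\|_1^{1/3}\|\balpha\|_\infty^{2/3},$$
using $\sum_k q^k |A_k| \asymp \|\balpha\|_1$ and $\sum_k q^k \ll \|\balpha\|_\infty$ (a geometric sum dominated by its largest term). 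Taking the fourth power delivers the required factor $\|\balpha\|_1^{4/3}\|\balpha\|_\infty^{8/3}$. The principal difficulty throughout is the uniform estimate: extracting the specific exponent $7m/6$ together with the fractional $(|A_i|)^{1/3}$ will require the refined lattice machinery from Section~\ref{sec:Counting_and_localtoglobal}, since a direct application of Theorem~\ref{thm:curveinbox_1} to the associated degree-$2$ curve yields a strictly weaker bound.
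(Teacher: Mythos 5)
Your plan is structurally different from the paper's and, at the top level, workable; but the crucial step — the uniform bound
$$N(A_1, A_2, A_3, A_4) \leq (|A_1||A_2||A_3||A_4|)^{1/3}\, q^{m+o(m)}(q^{7m/6 - r/2} + 1)$$
— is neither established by the paper nor adequately justified in your sketch, and the attribution of the fractional exponent to ``a multilinear Cauchy--Schwarz executed within the lattice step'' is off. The exponent $1/3$ does not come out of the lattice estimate itself. The actual source is a two-stage argument: first a Cauchy--Schwarz over $\lambda$ to symmetrize, writing $N(A_1,A_2,A_3,A_4)\le N(A_1,A_2,A_1,A_2)^{1/2}N(A_3,A_4,A_3,A_4)^{1/2}$ with $N(A_i,A_j,A_i,A_j)=\sum_\lambda R_{A_i,A_j}(\lambda)^2$; then an $L^1$/$L^4$ H\"older interpolation on the $\lambda$-sum, $\sum_\lambda R^2 \le (\sum_\lambda R)^{2/3}(\sum_\lambda R^4)^{1/3}$, using $\sum_\lambda R_{A_i,A_j}(\lambda)\ll |A_i||A_j|$ for the $L^1$ end and the fourth-moment estimate of Lemma~\ref{lem:Q_fourth_moment} (via the pointwise bound $R_{A_i,A_j}(\lambda)\le Q_{F,\lambda,m}$) for the $L^4$ end. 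The lattice-theoretic content of Section~\ref{sec:Counting_and_localtoglobal} enters only at that last stage, inside the proof of the fourth-moment estimate (through Lemma~\ref{lem:Q_two_cases} and a dyadic pigeonhole on $Q_{F,\lambda,m}$); it does not by itself generate the $|A_i|^{1/3}$ factors, and no amount of Cauchy--Schwarz ``within'' the lattice argument will.

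Once one sees that the $L^1$/$L^4$ interpolation on the $\lambda$-sum is the driving mechanism, the dyadic-amplitude decomposition of $\balpha$ becomes superfluous. The paper's proof applies that interpolation directly to the autocorrelation sums $Q_{F,\lambda}(\balpha)$: it bounds $\sum_\lambda |Q_{F,\lambda}(\balpha)|$ by $\|\balpha\|_1^2$ (quoting~\cite[eq.~(3.3)]{BS2022}), bounds $\sum_\lambda |Q_{F,\lambda}(\balpha)|^4$ by $\|\balpha\|_\infty^8\sum_\lambda Q_{F,\lambda,m}^4$ via the trivial pointwise estimate $|Q_{F,\lambda}(\balpha)|\le\|\balpha\|_\infty^2 Q_{F,\lambda,m}$, invokes Lemma~\ref{lem:Q_fourth_moment}, and then H\"older with exponents $(3,3/2)$ finishes in one line (together with the separate $\lambda=0$ contribution $O(\|\balpha\|_2^4)$). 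Your terminal H\"older step $\sum_k q^k|A_k|^{1/3}\ll\|\balpha\|_1^{1/3}\|\balpha\|_\infty^{2/3}$ and the fourth power are correct arithmetic, and the initial reduction to nonnegative $\balpha$ is valid since $E^\sqrt_{F,2}(\balpha)=\sum_\lambda Q_{F,\lambda}(\balpha)^2$ with each $Q_{F,\lambda}(\balpha)$ real; but as written your argument has a genuine gap where the multilinear uniform bound should be proved, and filling it honestly collapses back into the paper's simpler non-dyadic route.
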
 
We note that this implies 
$$E_{F,2}^{\sqrt}(\bm{1}_m) \leq q^{o(m)}\big{(}q^{7m/2-r/2} + q^{7m/3} \big{)}. $$
This is an improvement upon \cite[Theorem 2.1]{BS2022}, but a more specialized argument can further yield the following. 
\begin{thm}\label{thm:energy_1m_improved}
Let $q$ be odd and $F \in \F_q[T]$ of degree $r$. For any integer $m \leq r$ we have
$$E_{F,2}^{\sqrt}(\bm{1}_m) \leq q^{o(m)}\big{(}q^{7m/2-r/2} + q^{2m} \big{)}. $$
\end{thm}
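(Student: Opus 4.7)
The strategy is to refine Theorem~\ref{thm:energy_bound} by exploiting the structure of $\bm{1}_m$: its support is the $\F_q$-subspace $\cI_m$, so the ``diagonal'' contribution to the energy can be estimated directly by $|\cM|^2$ rather than by the general-weight bound $\|\balpha\|_1^{4/3}\|\balpha\|_\infty^{8/3} q^{m+o(m)}$ that yields the term $q^{7m/3+o(m)}$. Since each $x_i \in \cI_r$ has degree less than $r = \deg F$, the congruence $x_1+x_2\equiv x_3+x_4 \pmod F$ is in fact a polynomial equality $x_1+x_2 = x_3+x_4$ in $\F_q[T]$. Setting $\cM = \{x \in \cI_r : x^2 \bmod F \in \cI_m\}$ and $r_\cM(s) = |\{(x_1,x_2)\in\cM^2 : x_1+x_2=s\}|$, we decompose
$$E_{F,2}^{\sqrt}(\bm{1}_m) = \sum_{s\in\cI_r} r_\cM(s)^2 = r_\cM(0)^2 + \sum_{s\neq 0} r_\cM(s)^2.$$

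The contribution from $s=0$ is easy: since $\cM$ is closed under negation, $r_\cM(0)=|\cM|$, and a direct count using the factorization of $F$ bounds the number of $x\in\cI_r$ solving $x^2\equiv y \pmod F$ by $q^{o(m)}$ for each $y\in\cI_m$, giving $|\cM|\leq q^{m+o(m)}$. Hence this term contributes at most $q^{2m+o(m)}$, which is precisely the source of the new $q^{2m+o(m)}$ in the bound.

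For $s\neq 0$ we plan to reduce the problem to counting lattice points on a parabola. For each such $s$ with $\gcd(s,F)=1$, substituting $t = x_1 - x_2$ and $t \equiv s^{-1}\alpha\pmod F$ with $\alpha \in \cI_m$, the quantity $r_\cM(s)$ equals the number of pairs $(\alpha,\beta)\in\cI_m^2$ satisfying $\alpha^2\equiv s^2(\beta-s^2)\pmod F$. An $\ell^1$ estimate follows by swapping the order of summation: the quartic $s^4 - s^2\beta + \alpha^2\equiv 0 \pmod F$ has $O(q^{o(m)})$ solutions in $s\in \cI_r$ for fixed $(\alpha,\beta)\in\cI_m^2$, and so $\sum_{s\neq 0}r_\cM(s)\leq q^{2m+o(m)}$. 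The desired $\ell^2$ estimate then follows from the Cauchy--Schwarz inequality
$$\sum_{s\neq 0} r_\cM(s)^2 \leq \Bigl(\sum_{s\neq 0} r_\cM(s)\Bigr)^{1/2}\Bigl(\sum_{s\neq 0} r_\cM(s)^3\Bigr)^{1/2},$$
provided we can establish the auxiliary third-moment estimate $\sum_{s\neq 0} r_\cM(s)^3 \leq q^{5m-r+o(m)}$. We expect to prove this latter bound by applying the lattice-point machinery of Section~\ref{sec:Counting_and_localtoglobal} to the system $\mu_i^2\equiv s^2\delta_i - s^4 \pmod F$ ($i=1,2,3$) with $\mu_i,\delta_i\in\cI_m$: pairwise subtraction eliminates $\delta_i$ and produces polynomial identities in $s$ of small degree, whose solutions are counted by intersecting explicit lattices with boxes in $\Ki^d$. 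The hard part will be this third-moment estimate, especially the careful treatment of the degenerate coincidences $\mu_i = \pm \mu_j$, each of which individually contributes $O(q^{2m})$ and must be absorbed without disturbing the main bound.
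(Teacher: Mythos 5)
Your opening moves are sound and in fact coincide with the paper's: splitting $s=0$ from $s\ne 0$ is the paper's $\lambda=0$ versus $\lambda\ne 0$ split, the observation that a congruence modulo $F$ among elements of $\cI_r$ is a genuine equality in $\F_q[T]$ is correct, and the reduction of $r_\cM(s)$ to solutions of $\alpha^2+s^4\equiv 2s^2\beta\ \Mod{F}$ with $(\alpha,\beta)\in\cI_m^2$ is exactly Lemma~\ref{lem:modular-prelim}. The genuine gap is the deferred third-moment estimate, which as stated is simply false: $r_\cM$ takes nonnegative integer values, so $r_\cM(s)^3\ge r_\cM(s)$ pointwise, and hence
$$
\sum_{s\ne 0}r_\cM(s)^3\ \ge\ \sum_{s\ne 0}r_\cM(s)\ =\ |\cM|^2-|\cM|\ \gg\ q^{2m},
$$
which exceeds your claimed $q^{5m-r+o(m)}$ whenever $m<r/3$. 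The minimal repair would be to claim $q^{5m-r}+q^{2m}$ (this is forced, since Cauchy--Schwarz applied to the target second moment already gives a matching lower bound on $\sum r_\cM^3$), but even the repaired estimate is neither proved here nor obviously within reach of the ``pairwise subtraction'' sketch: an $\ell^3$ bound is genuinely stronger than the $\ell^2$ information the paper establishes, and nothing in your plan addresses the scenario of a few exceptional $s$ carrying $r_\cM(s)$ close to $q^m$, which alone would drive $\sum_{s\ne 0}r_\cM(s)^3$ up to $q^{3m}$ and break the corrected estimate throughout $0<m<r/2$.

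The paper's proof never touches a third moment. After the $J_{F,\lambda,m}$ reduction, it trichotomises $\lambda\ne 0$ by the successive minima $\sigma_{1,\lambda},\sigma_{2,\lambda}$ of the lattice $\cL_\lambda=\{(x,y):x\equiv 2\lambda^2 y\ \Mod{F}\}$ against $\cB=\{(x,y):|x|<q^{2m},\,|y|<q^m\}$. The outer cases $\sigma_1>1$ and $\sigma_2\le 1$ give $q^{2m+o(m)}$ and $q^{7m/2-r/2+o(m)}$ quickly from Lemmas~\ref{lem:lattice_body_intersection} and~\ref{lem:largest}; all the work is in the middle case $\sigma_1\le 1<\sigma_2$, where $\cL_\lambda\cap\cB$ lies on a single line $\F_q[T](a_\lambda,b_\lambda)$ and one reparametrises $J_{F,\lambda,m}\ll K_{\lambda,m}+1$, then controls $\sum K_{\lambda,m}^2\le q^{2m+o(m)}$ by divisor counting (Lemma~\ref{lem:divisorbound}) applied to the constraints $y-y_\lambda\equiv 0\ \Mod{b_\lambda}$ and $(x-x_\lambda)(x+x_\lambda)\equiv 0\ \Mod{a_\lambda}$. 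That is an intrinsically second-moment computation; to salvage your route you would have to redo precisely this delicate middle case at the third power, and it is not at all clear the resulting bound would be what you need.
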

Analogous to \cite{KSSZ2021} we also consider the case $k=4$ when $\balpha = \bm{1}_m$. It is simple to show that 
$$ E_{F,4}^{\sqrt}(\bm{1}_m) \leq q^{4m}E_{F,2}^{\sqrt}(\bm{1}_m)$$
which, by Theorem \ref{thm:energy_1m_improved}, implies 
\begin{align}\label{eq:T4_trivial}
    E_{F,4}^{\sqrt}(\bm{1}_m) \leq  q^{4m + o(m)}\big{(}q^{7m/2-r/2} + q^{2m} \big{)}.
\end{align}
We improve upon this for small $m$ with the following.
\begin{thm}\label{thm:T4_bound}
Let $q$ be odd and $F \in \F_q[T]$ be irreducible of degree $r$. For any integer $m \leq r$ we have
$$E_{F,4}^{\sqrt}(\bm{1}_m) \leq q^{6m + o(m)}\big{(}{q^{11m/2-r/2}}+{q^{3m-r/4}} + q^{5m/8-r/8}\big{)} + q^{5m + o(m)}. $$
\end{thm}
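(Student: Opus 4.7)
The plan is to interpret $T_4 := E_{F,4}^\sqrt(\bm{1}_m)$ as a weighted version of the second-order energy and apply Theorem~\ref{thm:energy_bound}. Writing $f = \bm{1}_{\cA_m}$ for the indicator of $\cA_m := \{x \in \F_q[T]/\langle F\rangle : \deg_F(x^2) < m\}$, one has $T_4 = \sum_c f^{(4)}(c)^2$. Setting $g := f*f$, the symmetry $\cA_m = -\cA_m$ forces $g$ to be even, and hence $g(x) = \alpha(x^2)$ for a well-defined nonnegative weight $\alpha$ on the quadratic residues of $\F_q[T]/\langle F\rangle$ (extended by zero). This identification converts $T_4$ into the weighted second-order energy $E_{F,2}^\sqrt(\alpha)$. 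I would also record the pointwise bounds $\|g\|_1 = |\cA_m|^2 \le q^{2m}$ and $\|g\|_\infty \le q^m$, the latter following from the observation that for $x \neq 0$ any pair $(u,v) \in \cA_m^2$ with $u + v = x$ must satisfy $u - v \in x^{-1}\cI_m \pmod F$, a set of $q^m$ elements.

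The chief obstacle is that $\alpha$ need not be supported on $\cI_m$, as the hypothesis of Theorem~\ref{thm:energy_bound} demands. To circumvent this, I would decompose $\alpha$ dyadically via its level sets $L_t = \{y : q^{t-1} < \alpha(y) \le q^t\}$ for $0 \le t \le O(m)$, writing $\alpha = \sum_t \alpha_t$ with $\alpha_t \le q^t \bm{1}_{L_t}$. A Cauchy--Schwarz step then allows each $\alpha_t$ to be replaced by a rescaled indicator of $L_t \cap \cI_m$ at a cost controlled by $\|g\|_1$ and $\|g\|_\infty$, after which Theorem~\ref{thm:energy_bound} applies at each level. Summing across the $O(m)$ dyadic ranges and optimising the balance should produce the three multiplicative contributions $q^{6m + o(m)}(q^{11m/2 - r/2} + q^{3m - r/4} + q^{5m/8 - r/8})$, each arising from a different balance point in the exponent optimisation.

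The additive $q^{5m + o(m)}$ term comes from handling the diagonal-dominated regime. Splitting $T_4 = f^{(4)}(0)^2 + \sum_{c \neq 0} f^{(4)}(c)^2$, the first summand equals $T_2^2 \le q^{4m + o(m)}$ by Theorem~\ref{thm:energy_1m_improved}. For the second, one combines the trivial identity $\sum_c f^{(4)}(c) = |\cA_m|^4 = q^{4m}$ with a bound on $\max_{c \neq 0} f^{(4)}(c)$ obtained from the function-field geometry of numbers of Section~\ref{sec:Counting_and_localtoglobal} (in particular the Henk-type estimate~\eqref{eq:Henk}) applied to a lattice encoding the constraint $\sum_{i=1}^4 x_i \equiv c \pmod F$ together with the quadratic conditions $x_i \in \cA_m$.

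The hardest step will be the dyadic bookkeeping in the reduction to Theorem~\ref{thm:energy_bound}: tracking how Cauchy--Schwarz losses in the passage from $\alpha_t$ to $\bm{1}_{L_t \cap \cI_m}$ propagate through the weighted energy bound and verifying that the optimisation across levels yields precisely the three exponents $11m/2 - r/2$, $3m - r/4$ and $5m/8 - r/8$. The lattice construction for the $q^{5m+o(m)}$ term is equally delicate, since the lattice must accommodate both the linear sum constraint and the quadratic membership $x_i \in \cA_m$; this will likely require an auxiliary change of variables (for example $a_i = x_i - x_{i+4}$, $b_i = x_i + x_{i+4}$, which turns $x_i^2 - x_{i+4}^2 = a_i b_i$ into a single product condition) before the Henk-type estimate becomes directly applicable.
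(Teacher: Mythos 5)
Your opening identification is correct and in fact coincides with the paper's starting point: setting $f = \bm{1}_{\cA_m}$ and $g = f*f$, one has $g(\lambda) = Q_{F,\lambda,m}$, $g$ is even, and $E_{F,4}^{\sqrt}(\bm{1}_m) = \|g^{(2)}\|_2^2$. The pointwise bound $\|g\|_\infty \le q^m$ via the identity $(u-v)(u+v) \equiv u^2 - v^2 \pmod F$ is also sound.

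However, the central reduction you propose --- converting $T_4$ to $E_{F,2}^{\sqrt}(\alpha)$ and then invoking Theorem~\ref{thm:energy_bound} --- has a fatal obstruction that a Cauchy--Schwarz restriction cannot repair. Theorem~\ref{thm:energy_bound} is only available for weights with $\supp(\balpha) \subseteq \{\deg_F x < m\}$, and that hypothesis is load-bearing: its proof reduces to Lemma~\ref{lem:Q_fourth_moment}, whose lattice argument requires the bounded-degree support. Your $\alpha$, defined by $\alpha(\lambda^2) = g(\lambda)$, is supported on the set of squares $\lambda^2$ with $\lambda = u - v$, $u, v \in \cA_m$. Since $\cA_m$ consists of the roughly $q^m$ square roots of $\cI_m$ modulo $F$, and square roots are pseudorandomly distributed in $\cI_r$, the differences $u-v$ and their squares essentially fill all of $\cI_r$, not a small interval. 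Thus $\supp(\alpha)\cap\cI_m$ is an exponentially small sliver of $\supp(\alpha)$, and the dyadic level sets $L_t$ sit almost entirely outside $\cI_m$. Passing to $L_t \cap \cI_m$ throws away the bulk of the object you want to bound; there is no Cauchy--Schwarz step that makes this lossless, and $\|g\|_1$, $\|g\|_\infty$ do not control what is discarded. The paper avoids this entirely: after the first dyadic pigeonhole on $Q_{F,d,m}$ it performs a second dyadic pigeonhole on $(\cD*\cD)(\lambda)$ and proves a bespoke Lemma~\ref{lem:K_delta}, whose lattice construction lives in $\F_q[T]^3$ with the congruence $x + yd^2 + zd^4 \equiv 0 \pmod F$ for $d \in \cI_r$ and deliberately does not impose any support restriction on $d$; it feeds in the $a_\lambda, b_\lambda$ from Lemma~\ref{lem:Q_two_cases}, not an appeal to Theorem~\ref{thm:energy_bound}.

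Your proposed source of the $q^{5m+o(m)}$ term also does not hold up. Splitting off the diagonal $f^{(4)}(0)^2 = (E_{F,2}^{\sqrt}(\bm{1}_m))^2 \le q^{4m+o(m)}$ is fine, but the remaining $\sum_{c\ne 0} f^{(4)}(c)^2$ cannot be controlled by $\max_{c\ne 0} f^{(4)}(c)\cdot \sum_c f^{(4)}(c)$ without a bound like $\max_{c\ne 0} f^{(4)}(c) \le q^{m+o(m)}$, which there is no reason to expect and which none of the Henk-type estimates or change of variables you sketch delivers. In the paper the $q^{5m+o(m)}$ term arises from the regime where $K$ is large and $K|\cF| \ll q^{3m+o(m)}/\Delta^4$ in Lemma~\ref{lem:K_delta}, a bound obtained by forcing the resultant-type congruence in \eqref{eq:equation_for_triple_in_intersection} to become an exact equation and counting square values among products $a_{\lambda_1}b_{\lambda_1}a_{\lambda_2}b_{\lambda_2}$ --- nothing like the envelope argument you outline.
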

This is an improvement upon (\ref{eq:T4_trivial}) if $m < r/12$.

\section{Miscellaneous Preliminaries }\label{sec:misc_prelim}
In this section, we provide several miscellaneous preliminaries that are essential for proving some of our main results. These results mainly concern certain properties of polynomials in $\F_q[T][x]$.
\subsection{Heights of polynomials  }
Given a polynomial $P(x)=a_0+a_1x+\dots+a_dx^{d}\in \F_q[T][x]$ we define 
$$h(P)=\max_{0\le j \le d}\deg{a_j},$$
and refer to $h(P)$ as the \textit{height} of $P$.
\begin{lemma} 
\label{lem:height-additive}
For any $P_1,P_2\in \F_q[T][x],$
$$h(P_1P_2)=h(P_1)+h(P_2).$$
\end{lemma}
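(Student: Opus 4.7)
The plan is to reduce the statement to the familiar fact that, over an integral domain, the degree of a product of polynomials equals the sum of the degrees.

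First, I would observe that $h(P)$ can be reinterpreted by swapping the roles of $T$ and $x$. Every element of $\F_q[T][x]$ can be written uniquely as $\sum_{i,j} c_{ij} T^j x^i$ with $c_{ij}\in \F_q$, and this gives the canonical isomorphism $\F_q[T][x]\cong \F_q[x][T]$. Writing $P(x)=\sum_i a_i(T) x^i$ with $a_i(T)=\sum_j c_{ij}T^j$, regrouping gives $P = \sum_j b_j(x)T^j$ with $b_j(x)=\sum_i c_{ij}x^i$. Then
\[
h(P)=\max_{i}\deg a_i(T) = \max\{j:\exists i,\ c_{ij}\neq 0\}=\max\{j:b_j\neq 0\}=\deg_T(P),
\]
where on the right $P$ is viewed as an element of $\F_q[x][T]$.

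Next, since $\F_q[x]$ is an integral domain, so is the polynomial ring $\F_q[x][T]$, and in any polynomial ring over an integral domain the degree is additive on products. Applying this to $P_1,P_2$ gives
\[
h(P_1 P_2)=\deg_T(P_1 P_2)=\deg_T(P_1)+\deg_T(P_2)=h(P_1)+h(P_2),
\]
which is exactly the claim. No step here is a real obstacle; the only thing to verify carefully is the identification $h(P)=\deg_T(P)$ and the fact that $\F_q[x]$ is a domain so that the leading $T$-coefficients of $P_1$ and $P_2$ (which are nonzero elements of $\F_q[x]$) have nonzero product. If a more hands-on proof were preferred, one could instead extract the leading $T$-parts $A(x),B(x)\in\F_q[x]$ of $P_1,P_2$, note $AB\neq 0$, and read off a coefficient of $P_1P_2$ whose $T$-degree is exactly $h(P_1)+h(P_2)$; this is essentially the same argument unpacked.
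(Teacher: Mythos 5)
Your proof is correct, and it takes a cleaner, more conceptual route than the paper's argument. You observe that under the canonical identification $\F_q[T][x]\cong\F_q[x][T]$, the height $h(P)$ is precisely $\deg_T(P)$, and then the claim becomes the standard fact that $\deg$ is additive over the integral domain $\F_q[x]$. The paper instead argues by hand: it isolates, within each of $P_1$ and $P_2$, the sub-polynomial formed by the coefficients of maximal $T$-degree (call these $P_1^{(1)}$, $P_2^{(1)}$), expands $P_1P_2 = (P_1^{(1)}+P_1^{(2)})(P_2^{(1)}+P_2^{(2)})$, and notes that only the $P_1^{(1)}P_2^{(1)}$ cross term can contribute a coefficient of $T$-degree $h(P_1)+h(P_2)$, while all the others fall strictly short. (Strictly speaking, this step still needs a one-line justification that the leading-degree contributions in $P_1^{(1)}P_2^{(1)}$ do not cancel --- which boils down to looking at the top $x$-coefficient, exactly the integral-domain point your isomorphism makes automatic.) Your version buys brevity and robustness, since it reduces everything to one textbook lemma; the paper's version makes the cancellation-tracking explicit in a way that is reused in spirit by Lemma~\ref{lem:factorisation-bound}, where one wants to bound heights of factors rather than just products. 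You in fact sketch the paper's route yourself in your final sentence, so you have both in hand.
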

\begin{proof}
Suppose 
$$P_1(x)=a_0+\dots+a_dx^{d},$$
and 
$$P_2(x)=b_0+\dots+b_e x^{e}$$
for some positive integers $d$ and $e$ and $a_i, b_i \in \F_q[T]$. If we define the sets 
$$\cI=\{ 0\le i \le d \ : \ \deg a_i =h(P_1)  \},$$
$$\cJ=\{ 0\le j \le e \ : \ \deg b_j =h(P_2)  \},$$
then this allows one to partition the coefficients of $P_1$ and $P_2$ as 
$$P_1=P^{(1)}_1+P^{(2)}_1,$$
and 
$$P_2=P^{(1)}_2+P^{(2)}_2,$$
with 
\begin{align*}
P^{(1)}_1(x)=\sum_{i\in \cI}a_ix^{i}, \quad P^{(1)}_2(x)=\sum_{j\in \cJ}b_jx^{j}
\end{align*}
and of course with $P_i^{(2)} = P_i - P_i^{(1)}$.
Note that 
\begin{align*}
h(P^{(1)}_1P^{(1)}_2)=h(P_1)+h(P_2),
\end{align*}
and 
\begin{align*}
 h(P^{(i)}_1P^{(j)}_2)<h(P_1)+h(P_2) \ \ \text{if} \ \  (i,j)\neq (1,1),
\end{align*}
from which the result follows after noting 
$$P_1P_2=(P^{(1)}_1+P^{(2)}_1)(P^{(1)}_2+P^{(2)}_2).$$
\end{proof}
\begin{lemma}
\label{lem:factorisation-bound}
Let  $P(x)=a_0+a_1x+\dots+a_dx^{d}\in \F_q[T][x]$ with each $a_j$ satisfying 
$$|a_j|\le q^{(d-j+a)\ell},$$
for some $\ell\in \mathbb{N}$ and $a\in \mathbb{Z}$.
If there exists $P_1,P_2\in  \F_q[T][x],$ satisfying
$$P=P_1P_2,$$
then 
$$P_1(x)=b_0+b_1x+\dots+b_ex^{e},$$
for some $e\le d$ and $b_j \in \F_q[T]$ satisfying 
$$|b_j|\le q^{(e-j+a)\ell}.$$
\end{lemma}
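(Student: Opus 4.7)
The plan is to reduce the coefficientwise height bound to a single height inequality via a change of variable, and then exploit additivity of the height (Lemma~\ref{lem:height-additive}).

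First I would make the substitution $x = T^{\ell}y$ to rescale the coefficients. Writing
\[
P(T^{\ell}y) = \sum_{j=0}^{d} a_j T^{j\ell} y^{j},
\]
the hypothesis $|a_j|\le q^{(d-j+a)\ell}$ becomes $|a_j T^{j\ell}|\le q^{(d+a)\ell}$ uniformly in $j$, i.e.\ $h\bigl(P(T^{\ell}y)\bigr) \le (d+a)\ell$. The same substitution applied to the factors gives $P_1(T^{\ell}y) = \sum_{j=0}^{e} b_j T^{j\ell} y^{j}$ and an analogous expression for $P_2(T^{\ell}y) = \sum_{j=0}^{d-e} c_j T^{j\ell} y^{j}$, where $e = \deg P_1$ and $d-e = \deg P_2$.

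Next I would get a lower bound on $h(P_2(T^{\ell}y))$. Since $c_{d-e}$ is the leading coefficient of $P_2$, it is a nonzero element of $\F_q[T]$, hence $|c_{d-e}|\ge 1$. The $y^{d-e}$ coefficient of $P_2(T^{\ell}y)$ is $c_{d-e}T^{(d-e)\ell}$, so
\[
h\bigl(P_2(T^{\ell}y)\bigr) \;\ge\; (d-e)\ell.
\]

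Combining these via Lemma~\ref{lem:height-additive} applied to the factorisation $P(T^{\ell}y) = P_1(T^{\ell}y)\,P_2(T^{\ell}y)$ yields
\[
h\bigl(P_1(T^{\ell}y)\bigr) \;=\; h\bigl(P(T^{\ell}y)\bigr) - h\bigl(P_2(T^{\ell}y)\bigr) \;\le\; (d+a)\ell - (d-e)\ell \;=\; (e+a)\ell.
\]
Unpacking, every coefficient $b_j T^{j\ell}$ of $P_1(T^{\ell}y)$ has degree at most $(e+a)\ell$, which rearranges to $|b_j|\le q^{(e-j+a)\ell}$ as required. I do not foresee a real obstacle here; the only point to handle carefully is the case $P_2 = 0$ (excluded since $P\neq 0$ is implicit) and verifying that the substitution $x\mapsto T^{\ell}y$ genuinely converts the non-uniform height condition into a uniform one so that Lemma~\ref{lem:height-additive} can be applied directly.
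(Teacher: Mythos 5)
Your proof is correct and follows essentially the same approach as the paper's: rescale by $x\mapsto T^{\ell}x$ to turn the graded coefficient bound into a uniform height bound, lower-bound $h(P_2(T^\ell x))$ by $\deg P_2 \cdot \ell$ via the leading coefficient, and invoke Lemma~\ref{lem:height-additive} to bound $h(P_1(T^\ell x))$ and unpack. The only minor point (shared with the paper's own write-up) is that taking $e=\deg P_1$ and writing $\deg P_2 = d-e$ tacitly assumes $\deg P = d$; if $a_d$ could vanish, one should instead take $e = d - \deg P_2 \geq \deg P_1$, which still satisfies $e\le d$ and gives the stated bound with the extra $b_j$ set to zero.
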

\begin{proof}
Consider the polynomial 
$$P^*(x)=P(T^{\ell}x)=P_1(T^{\ell}x)P_2(T^{\ell}x)=P_1^*(x)P^*_2(x).$$
We see that 
$$h(P^*)\le \ell(d+a).$$
Since 
$$h(P_2^*)\ge \deg{P_2}\ell,$$
Lemma~\ref{lem:height-additive} now implies 
\begin{align*}
h(P_1^*)\le (d-\deg{P_2}+a)\ell=(\deg{P_1}+a)\ell,
\end{align*}
and hence 
$$\deg(b_j)+\ell j \le (\deg{P_1}+a)\ell,$$
from which the result follows.
\end{proof}
\subsection{Resultants }
Given an integral domain $R$ and two polynomials $P,Q\in R[x]$ of the form
$$P(x)=a_0+\dots+a_dx^{d},$$
and 
$$Q(x)=b_0+\dots+b_e x^{e},$$
we define the Sylvester matrix 
$$S(P,Q) = \left(\begin{array}{c}  A\\  B\\ \end{array}\right) $$
where
\begin{equation*}
A=\left(
  \begin{array}{cccccccc}    a_{d} &  \ldots &  a_{1} &a_{0} & 0 & 0 & \ldots & 0 \\
    0 & a_{d}&  \ldots &  a_{1} &a_{0} & 0 & \ldots & 0  \\
    \vdots & \vdots & \ddots  & \vdots & \vdots & \vdots & \ddots & \vdots \\
    0 & \ldots & 0 & a_{d} &  \ldots & \ldots &  a_{1} &a_{0}\\
  \end{array}
\right)
\end{equation*}
and
\begin{equation*}
B=\left(
  \begin{array}{cccccccc}   b_{e} & \ldots & b_{1}  &  b_{0}  & 0 & 0 & \ldots & 0 \\
    0 & b_{e} & \ldots & b_{1}  &  b_{0} & 0 & \ldots & 0  \\
    \vdots & \vdots & \ddots  & \vdots & \vdots & \vdots & \ddots & \vdots \\
    0 & \ldots & 0 & b_{e} & \ldots & \ldots & b_{1}  &  b_{0}\\
  \end{array}
\right).
\end{equation*}
The resultant of $P$ and $Q$ is then defined as 
$$
\Res(P, Q)=\det S(P,Q).
$$
The main (equivalent) properties of resultants we will use are the following, which are well known. 
\begin{lemma}
\label{lem:resultant}
For any polynomials $P,Q\in R[x]$, 
\begin{itemize}
    \item $\Res(P,Q)=0$
if and only if $\gcd(P,Q) \neq 1$ in $R[x]$. 
\item $\Res(P,Q)=0$
if and only if $P$ and $Q$ have a common root over $\overline \F$, where $\overline \F$ is an algebraically closed field containing $R$.  
\end{itemize}
%$P$ and $Q$ have a common root over $\overline \F$, where $\overline \F$ is an algebraically closed field containing $R$. 
\end{lemma}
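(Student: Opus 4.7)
The plan is to exploit the well-known interpretation of the Sylvester matrix as the matrix of a natural linear map. Let $K=\operatorname{Frac}(R)$ and consider the $K$-linear map
$$\Phi : K[x]_{<e}\times K[x]_{<d} \longrightarrow K[x]_{<d+e}, \quad (U,V)\mapsto UP+VQ,$$
where $K[x]_{<m}$ denotes polynomials of degree less than $m$. In the standard monomial bases, the matrix of $\Phi$ agrees with $S(P,Q)$ up to transposition and reordering. Thus $\Res(P,Q)=\det S(P,Q)=0$ if and only if $\Phi$ has a non-trivial kernel, which is the pivotal equivalence.

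For the forward direction of the first bullet, I would assume $\gcd(P,Q)\ne 1$ in $R[x]$, meaning there is a non-constant common factor $D$ (the degenerate case of a non-unit constant common factor only arises when the leading coefficients vanish, in which case $S(P,Q)$ has a zero row or column and $\Res(P,Q)=0$ directly). Writing $P=DP_1$ and $Q=DQ_1$, the pair $(U,V)=(Q_1,-P_1)$ is a non-zero element of $\ker\Phi$ with $\deg U<e$ and $\deg V<d$, forcing $\Res(P,Q)=0$. For the converse, if $\Res(P,Q)=0$ then there exist $U,V\in K[x]$ not both zero with $\deg U<e$, $\deg V<d$ and $UP+VQ=0$. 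Since $K[x]$ is a PID, the relation $P\mid VQ$ combined with $\gcd_{K[x]}(P,Q)=1$ would force $P\mid V$ and hence $V=0$ (because $\deg V<\deg P$), and then $U=0$, contradicting the non-triviality. Therefore $P$ and $Q$ share a non-constant factor in $K[x]$, which by Gauss's lemma can be cleared of denominators to give a non-constant common factor in $R[x]$.

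The second bullet follows immediately from the first: any non-constant common factor $D\in\overline{\F}[x]$ factors over the algebraically closed field $\overline{\F}$ into linear terms $x-\alpha$, each of which supplies a common root of $P$ and $Q$; conversely, any common root $\alpha$ yields the non-constant common factor $x-\alpha$. Passing between $R[x]$ and $\overline{\F}[x]$ is harmless because the determinant defining the resultant is an integer-coefficient polynomial expression in the coefficients of $P$ and $Q$, so it is preserved under the inclusion $R\hookrightarrow \overline{\F}$.

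The only real subtlety is bookkeeping around degenerate leading coefficients, which I would handle once at the start by observing that if $a_d=0$ or $b_e=0$ then $S(P,Q)$ has a vanishing row or column and both conditions ("$\Res=0$" and "common factor / common root") are trivially satisfied by, say, the constant factor $1$ interpreted appropriately, or by reducing $d$ or $e$. No deeper obstruction is expected.
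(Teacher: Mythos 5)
The paper gives no proof of this lemma: it is cited as a well-known fact about resultants, so there is no "paper approach" to compare against. Your argument via the Sylvester linear map $\Phi:(U,V)\mapsto UP+VQ$ is the standard textbook proof and it is essentially correct; in particular, you correctly work in $K[x]$ with $K=\operatorname{Frac}(R)$ and invoke Gauss's lemma (which requires $R$ to be a UFD — fine here, since $R=\F_q[T]$ throughout the paper), and your conclusion that $\Res(P,Q)=0$ is equivalent to a \emph{non-constant} common factor is actually sharper than the paper's loose phrasing ``$\gcd(P,Q)\neq 1$ in $R[x]$'', which, taken literally, is not equivalent to the common-root criterion (e.g.\ $P=T$, $Q=Tx$ have $\gcd=T\ne 1$ in $\F_q[T][x]$ but $\Res(P,Q)=T\ne 0$ and no common root).

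The one genuinely incorrect sentence is your parenthetical handling of the degenerate cases. It is not true that a non-unit constant common factor ``only arises when the leading coefficients vanish'' (the example $P=T$, $Q=Tx$ above has non-vanishing leading coefficients in $x$), nor is it true that $a_d=0$ or $b_e=0$ forces a vanishing row or column of $S(P,Q)$ (e.g.\ with formal degrees $d=e=1$, $P=1$, $Q=x$, one gets $S=\begin{pmatrix}0&1\\1&0\end{pmatrix}$ with determinant $-1$). These degeneracies don't affect the core argument or the paper's applications, where leading coefficients are always taken coprime to $F$, but the aside as written is false and should either be dropped or replaced by the clean hypothesis that $P$ and $Q$ are written with nonzero leading coefficients.
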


We next present a construct of~\cite[Section~2]{BGKS} which is useful for bounding resultants. Let $m,n\ge 2$ be positive integers and $\sigma\in\R$. Define the $(n-1) \times (m+n-2)$
circulant matrix   $A(m, n,\sigma)$  by
\begin{equation*}
\left(
  \begin{array}{cccccccc}
    \sigma & \sigma+1 & \ldots & \sigma + m-1 & 0 & 0 & \ldots & 0 \\
    0 & \sigma  & \ldots  & \sigma +m-2 &\sigma + m-1 & 0 & \ldots & 0  \\
    \vdots & \ldots & \ldots  & \ldots & \ldots & \ldots & \ldots & \vdots \\
    0 & \ldots & 0 & \sigma &\sigma+1 & \ldots  & \ldots &\sigma + m-1\\
  \end{array}
\right).
\end{equation*}
We call the $(i,j)$-th element of $A(m, n,\sigma)$ \textit{marked} if $i \leq j \leq i+m-1$. If $\sigma > 0$ then being \textit{marked} is equivalent to being non-zero. The following is~\cite[Lemma~1]{BGKS}.
\begin{lemma}
\label{lem:DeterMagic} Let $m,n\ge 2$ be integers and
$\sigma,\vartheta\in\R$. Let $x_{i,j}$ denote the $(i,j)$-th entry of the  $(m+n-2)\times (m+n-2)$ matrix
$$
X(m, n)=\left(
  \begin{array}{c}
    A(m,n,\sigma)\\
    A(n,m,\vartheta) \\
  \end{array}
\right).
$$
For any permutation 
$$\pi:\{1,\dots,m+n-2\}\rightarrow \{1,\dots,m+n-2\},$$
if $x_{j, \pi(j)}$ is marked for all $j$
then
$$
\sum_{j=1}^{m+n-2}x_{j,\pi(j)}
=(m-1+\sigma)(n-1+\vartheta) - \sigma\vartheta.
$$
\end{lemma}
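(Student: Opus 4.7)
The plan is to compute the sum directly by expressing each marked entry explicitly in terms of the row index $j$ and the column index $\pi(j)$, and then exploiting the fact that $\pi$ is a permutation to telescope the $\pi$-dependent terms away.

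First I would read off the entries. For the top block (rows $1 \le j \le n-1$, coming from $A(m,n,\sigma)$), the marked condition $j \le \pi(j) \le j+m-1$ guarantees that the $(j,\pi(j))$-th entry equals $\sigma + (\pi(j)-j)$, since the entries in the $j$-th row of $A(m,n,\sigma)$ are the values $\sigma, \sigma+1, \dots, \sigma+m-1$ read off starting at column $j$. For the bottom block (rows $n \le j \le m+n-2$), shifting the row index back by $n-1$ identifies the entry with the $(j-(n-1), \pi(j))$-th entry of $A(n,m,\vartheta)$; the marked condition $j-(n-1) \le \pi(j) \le j$ gives the entry as $\vartheta + (\pi(j) - (j-(n-1)))$.

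Next, assuming all $x_{j,\pi(j)}$ are marked, I split the sum in two and compute:
\begin{align*}
\sum_{j=1}^{m+n-2} x_{j,\pi(j)}
&= \sum_{j=1}^{n-1}\bigl(\sigma + \pi(j)-j\bigr) + \sum_{j=n}^{m+n-2}\bigl(\vartheta + \pi(j) - j + (n-1)\bigr) \\
&= (n-1)\sigma + (m-1)\vartheta + (m-1)(n-1) + \sum_{j=1}^{m+n-2}\bigl(\pi(j)-j\bigr).
\end{align*}
Since $\pi$ is a permutation of $\{1,\dots,m+n-2\}$, the final sum vanishes, leaving $(n-1)\sigma+(m-1)\vartheta+(m-1)(n-1)$. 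A one-line check shows this equals $(m-1+\sigma)(n-1+\vartheta)-\sigma\vartheta$, which is the claimed identity.

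There is no real obstacle: the entire argument is an index-tracking exercise. The only subtle point is getting the row-offset right when passing from $X(m,n)$ to the $A(n,m,\vartheta)$ block, and correctly reading that a marked entry in a shifted circulant row depends linearly on $\pi(j)-j$ with the appropriate additive constant. Once that bookkeeping is in place, the permutation identity $\sum_j (\pi(j)-j)=0$ produces the desired closed form immediately.
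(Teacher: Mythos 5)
Your proof is correct. The paper does not actually prove this lemma; it cites it directly as \cite[Lemma~1]{BGKS}, so there is no in-paper proof to compare against. Your argument is the natural direct computation: identifying a marked entry $x_{j,\pi(j)}$ as a linear function of $\pi(j)-j$ in each block, summing, and using $\sum_j (\pi(j)-j)=0$ for a permutation $\pi$. The bookkeeping on the row offset for the $A(n,m,\vartheta)$ block (row $j$ of $X$ is row $j-(n-1)$ of $A(n,m,\vartheta)$, with marked range $j-(n-1)\le \pi(j)\le j$ and value $\vartheta+\pi(j)-j+(n-1)$) is handled correctly, and the final algebraic check $(n-1)\sigma+(m-1)\vartheta+(m-1)(n-1)=(m-1+\sigma)(n-1+\vartheta)-\sigma\vartheta$ is right.
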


Lemma~\ref{lem:DeterMagic} allows for a straightforward adaptation of~\cite[Corollary~3]{BGKS} into our setting. 
\begin{lemma}
\label{lem:resultantbound}
Let $d$ and $e$ be positive integers, and let $\ell, a, b \in \mathbb{R}$. Let $P,Q\in \F_q[T][x]$ be polynomials
\begin{align*}
P(x)=a_0+\dots+a_{e-1}x^{e-1}, \quad Q(x)=b_0+\dots+b_{d-1}x^{d-1}
\end{align*}
satisfying 
\begin{align*}
|a_j|\le q^{(e-j+a)\ell}, \quad |b_j| \le q^{(d-j+b)\ell}.
\end{align*}
Then 
\begin{align*}
|\Res(P,Q)| \le q^{((e+a)(d+b)-(a+1)(b+1))\ell}.
\end{align*}
\end{lemma}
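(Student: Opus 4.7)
The plan is to follow the template of the proof of~\cite[Corollary~3]{BGKS}, but adapted to the non-archimedean setting where the ultrametric inequality does the work of the usual triangle inequality. First, I would expand $\Res(P,Q)=\det S(P,Q)$ as a signed sum over permutations $\pi$ of $\{1,\dots,d+e-2\}$ of products $\prod_j S_{j,\pi(j)}$. Since a product vanishes as soon as one of its factors is $0$, only permutations whose selected entries are all nonzero (i.e., \emph{marked} in the sense of Lemma~\ref{lem:DeterMagic}) can contribute.

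Next I would match the Sylvester matrix $S(P,Q)$ to the matrix $X(m,n)$ from Lemma~\ref{lem:DeterMagic}. The top block of $S(P,Q)$ has $d-1$ rows and $d+e-2$ columns, and the coefficient hypothesis $|a_j|\le q^{(e-j+a)\ell}$ rewrites, for the entry in position $(i,j)$ (which is $a_{e-1-(j-i)}$ when marked), as
\begin{equation*}
\deg a_{e-1-(j-i)}\le \bigl((j-i)+(a+1)\bigr)\ell.
\end{equation*}
This exactly matches the entry $\sigma+(j-i)$ of $A(e,d,\sigma)$ with the choice $\sigma=a+1$, if we read the entries of $A(e,d,\sigma)$ as exponents (multiplied by $\ell$) of an upper bound. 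Analogously, the bottom block $B$ is governed by $A(d,e,\vartheta)$ with $\vartheta=b+1$, using the hypothesis on $|b_j|$. Thus I take $X(e,d)$ in Lemma~\ref{lem:DeterMagic} with $(\sigma,\vartheta)=(a+1,b+1)$.

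For any permutation $\pi$ whose selected entries are all marked, Lemma~\ref{lem:DeterMagic} immediately yields
\begin{equation*}
\sum_{j=1}^{d+e-2} x_{j,\pi(j)} = (e-1+\sigma)(d-1+\vartheta)-\sigma\vartheta = (e+a)(d+b)-(a+1)(b+1),
\end{equation*}
so the corresponding term in the determinant expansion has absolute value at most $q^{\bigl((e+a)(d+b)-(a+1)(b+1)\bigr)\ell}$, because $|\cdot|$ on $\F_q[T]$ is multiplicative and each factor's degree is bounded by the corresponding entry of $X(e,d)$ times $\ell$. Finally, summing over permutations and applying the ultrametric inequality (since $|\cdot|$ on $\Ki$ is non-archimedean) gives the same bound for $|\Res(P,Q)|$, which is the desired conclusion. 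No step is really a major obstacle; the only thing to be careful about is getting the indexing $k=e-1-(j-i)$ (respectively $k=d-1-(j-i)$) right so that the bound on $|a_k|$ (resp.\ $|b_k|$) lines up with the entries of $A(e,d,a+1)$ (resp.\ $A(d,e,b+1)$), after which Lemma~\ref{lem:DeterMagic} closes the argument.
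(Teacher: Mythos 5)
Your proposal is correct and follows essentially the same route as the paper's proof: bound $|\Res(P,Q)|$ by the maximum over permutations of the product of Sylvester entries (the non-archimedean step), match the marked entries of the Sylvester matrix to $X(e,d)$ with $\sigma = a+1$, $\vartheta = b+1$, and close with Lemma~\ref{lem:DeterMagic}. You have merely spelled out the indexing $x_{i,j} = a_{e-1-(j-i)}$ more explicitly than the paper, which cites \cite[Corollary~3]{BGKS} for these details.
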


\begin{proof}
The proof is essentially identical to the proof of \cite[Corollary~3]{BGKS}, but we sketch the details here. Let  $x_{i,j}$ denote the $ij$-th element of $S(P,Q)$. Then if by $\pi$ we denote a permutation 
$$\pi: \{1,...,m+n-2\} \to \{1,...,m+n-2\} $$
we have 
\begin{align*}
    \log_q|\Res(P,Q)| 
    &\leq \log_q\max_{\pi}\left|\prod_{j=1}^{d+e-2}x_{j, \pi(j)} \right|\\
    &= \max_{\pi}\sum_{j=0}^{d+e-2} \deg x_{j, \pi(j)}.
\end{align*}
The result now follows from Lemma \ref{lem:DeterMagic} with $\sigma = a+1, ~\vartheta = b+1, m=e$ and $n = d$. 
\end{proof}

\begin{cor}
\label{cor:good}
Let $F\in \F_q[T]$ be an irreducible polynomial of degree $r$ and let $d,e$ and $\ell$ be positive integers with $e \leq d$. Suppose $P,Q_1,\dots,Q_m\in \F_{q}[T][x]$ are linearly independent over $\F_q(T),$ with $P$ irreducible and suppose there exists $s\in \F_{q}[T]$ and $a,b \in \R$ such that
\begin{align}
\label{eq:s-root}
 \quad P(s)\equiv Q_1(s)\equiv \dots \equiv Q_m(s)\equiv 0 \Mod{F}
\end{align}
\begin{align}
\label{eq:Pcoefficientbound}
P(x)=a_0+\dots+a_{e-1}x^{e-1}, ~~|a_j|\le q^{(e-j+a)\ell},
\end{align}
and 
\begin{align}
\label{eq:Qcoefficientbound}
Q_i(x)=b_{i,0}+\dots+b_{i,d_i-1}x^{d_i-1}, ~~ |b_{i,j}|\le q^{(d_i-j+b)\ell}, ~~ d_i\le d.
\end{align}
Further suppose
\begin{align}
\label{eq:deabconds}
((e+a)(d+b)-(a+1)(b+1))\ell<r.
\end{align}
Then 
\begin{align*}
e\le d-m.
\end{align*}
\end{cor}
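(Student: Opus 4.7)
The plan is to combine the resultant estimate of Lemma~\ref{lem:resultantbound} with a linear-algebra argument in $\F_q(T)[x]$. For each $i\in\{1,\dots,m\}$, the coefficient bounds~\eqref{eq:Pcoefficientbound}, \eqref{eq:Qcoefficientbound}, and $d_i\le d$ combine with Lemma~\ref{lem:resultantbound} to give
\[
|\Res(P,Q_i)|\le q^{((e+a)(d+b)-(a+1)(b+1))\ell},
\]
which by~\eqref{eq:deabconds} is strictly less than $q^r=|F|$. On the other hand, the standard Bezout-type identity for resultants produces polynomials $A_i,B_i\in\F_q[T][x]$ with
\[
A_i(x)P(x)+B_i(x)Q_i(x)=\Res(P,Q_i);
\]
evaluating at $x=s$ and invoking~\eqref{eq:s-root} yields $\Res(P,Q_i)\equiv 0\pmod F$, so $F\mid\Res(P,Q_i)$. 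Together with $|\Res(P,Q_i)|<|F|$, this forces $\Res(P,Q_i)=0$. By Lemma~\ref{lem:resultant} and the irreducibility of $P$, we conclude that $P\mid Q_i$ in $\F_q(T)[x]$ for every $i=1,\dots,m$.

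Next, let $V\subseteq\F_q(T)[x]$ be the $\F_q(T)$-subspace spanned by $\{P,Q_1,\dots,Q_m\}$, so $\dim_{\F_q(T)} V=m+1$ by the linear independence hypothesis. By the previous step every element of $V$ is a multiple of $P$, and since $\deg P\le e-1\le d-1$ and $\deg Q_i\le d_i-1\le d-1$, every element of $V$ has $x$-degree at most $d-1$. Taking $\deg P=e-1$ (the extremal case; a lower degree only tightens the bound exponent in~\eqref{eq:deabconds}), $V$ is contained in the $\F_q(T)$-space $\{Pg:g\in\F_q(T)[x],~\deg g\le d-e\}$, which has dimension $d-e+1$. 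Therefore $m+1\le d-e+1$, i.e., $e\le d-m$, as required.

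The principal obstacle is the passage from the \emph{local} condition~\eqref{eq:s-root} (a common root only modulo $F$) to the \emph{global} divisibility $F\mid\Res(P,Q_i)$; this bridge is provided by the Bezout representation of the resultant as an $\F_q[T][x]$-linear combination of $P$ and $Q_i$, which meshes perfectly with the reduction modulo $F$. Once $\Res(P,Q_i)=0$ is in hand, the rest is a direct dimension count paired with the use of irreducibility of $P$ over $\F_q(T)$ to upgrade the coprimality failure $\gcd(P,Q_i)\neq 1$ into the sharper statement $P\mid Q_i$.
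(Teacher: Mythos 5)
Your proposal follows essentially the same route as the paper: bound $|\Res(P,Q_i)|$ via Lemma~\ref{lem:resultantbound}, combine with the divisibility $F\mid\Res(P,Q_i)$ coming from the common root $s$ modulo $F$ to force $\Res(P,Q_i)=0$, invoke irreducibility of $P$ to obtain $P\mid Q_i$, and finish with the dimension count on the space of degree-$\le d-1$ multiples of $P$. The one small difference is that you justify $\Res(P,Q_i)\equiv 0\pmod F$ explicitly via the Bezout identity $A_iP+B_iQ_i=\Res(P,Q_i)$ evaluated at $x=s$, whereas the paper simply cites Lemma~\ref{lem:resultant} together with~\eqref{eq:s-root}; your version makes that step a bit more self-contained, but it is the same argument.
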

\begin{proof}
By Lemma~\ref{lem:resultant} and equation \eqref{eq:s-root} we obtain
\begin{align}
\label{eq:Res1}
\Res(P,Q_i)\equiv 0 \Mod{F}, \quad 1\le i \le m.
\end{align}
By Lemma~\ref{lem:resultantbound}, and  equations~\eqref{eq:Pcoefficientbound} and~\eqref{eq:Qcoefficientbound} we can say
\begin{align*}
\Res(P,Q_i)\le q^{((e+a)(d+b)-(a+1)(b+1))\ell}. 
\end{align*}
Thus by~\eqref{eq:deabconds} and~\eqref{eq:Res1}, we can conclude
\begin{align*}
\Res(P,Q_i)=0.
\end{align*}
Since $P$ is irreducible, another application of Lemma~\ref{lem:resultant} implies 
\begin{align*}
P|Q_i, \quad 1\le i \le m.
\end{align*}
Since $P,Q_1,\dots,Q_m$ are linearly independent and $$\deg{P},\deg{Q_i}\le d-1,$$ we see that 
\begin{align*}
\dim_{\F_q(T)}\{ \  Q\in \F_q(T)[x] \ : \deg{Q}\le d-1, \ \ P|Q \  \}\ge m+1,
\end{align*}
and the result follows since 
$$\dim_{\F_q(T)}\{ \  Q\in \F_q(T)[x] \ : \deg{Q}\le d-1, \ \ P|Q \  \}= d-\deg{P}=d-e+1.$$
\end{proof}

\subsection{A recursive inequality }
The final result of this section is essentially contained in the proof of~\cite[Lemma~15]{BGKS}. 
\begin{lemma}
\label{lem:recursive}
Let $F\in \F_q[T]$ have degree $r$. Let $s \in \F_q[T]$ and suppose $\gcd(s,F)=1$. Further suppose $\ell$ is a positive integer and $u_0,\dots,u_{d-1} \in \F_q[T]$ are not all zero modulo $F$ and satisfy
\begin{align}
\label{eq:uju0}
u_0s^{j}\equiv u_j \Mod{F}, \quad |u_j|\le q^{\ell((d^2-(2\beta+1)d+2)/2 + j) + d},
\end{align}
for some $d \geq 2$ satisfying
\begin{align}
\label{eq:alphaconds}
\ell  < \frac{r}{d^2-(2\beta+1)d+5}\left(1-\frac{3d}{2r} \right). 
\end{align}
Then there exists $a$ and $b$, both non-zero modulo $F$,  satisfying $bs\equiv a \mod{F}$ and 
\begin{align*}
\quad |a|\leq q^{\ell d(d+1-2\beta)/(2d-2) + d/(d-1)}, \quad |b|\leq q^{\ell(d^2-(2\beta+1)d+2)/(2d-2) + d/(d-1)}.
\end{align*}
\end{lemma}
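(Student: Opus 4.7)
The strategy is to construct a polynomial $P \in \F_q[T][x]$ of degree exactly $1$ with $P(s) \equiv 0 \Mod{F}$ and controlled coefficients; writing $P(x) = bx - a$ then directly yields the required pair $(a, b)$ with $bs \equiv a \Mod{F}$. To produce such a $P$, I would first construct by pigeonhole a non-trivial polynomial $P_0$ of degree less than $d$ vanishing at $s$ modulo $F$, extract an irreducible factor $P$ via Lemma~\ref{lem:factorisation-bound}, and then combine Corollary~\ref{cor:good} with the auxiliary polynomials $R_i(x) := u_i - u_0 x^i$ for $1 \le i \le d - 1$ to force $\deg P \le 1$.

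Each $R_i$ satisfies $R_i(s) \equiv 0 \Mod{F}$ by the hypothesis $u_0 s^i \equiv u_i \Mod{F}$, and the given bounds on $|u_j|$ translate into coefficient bounds of the form~\eqref{eq:Qcoefficientbound} with parameter $b_\star := c_0 - 1 + d/\ell$, where $c_0 := (d^2 - (2\beta+1)d + 2)/2$. Separately, a pigeonhole count on the evaluation map $(\gamma_0, \ldots, \gamma_{d-1}) \mapsto \sum_j \gamma_j s^j \bmod F$ restricted to tuples with $|\gamma_j| \le q^{(d - j + a_\star)\ell}$ produces a non-zero polynomial $P_0(x) = \sum \gamma_j x^j$ with $P_0(s) \equiv 0 \Mod{F}$ whenever $a_\star$ is taken just above $(r - d)/(\ell d) - (d+1)/2$. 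Lemma~\ref{lem:factorisation-bound} then provides an irreducible factor $P$ of $P_0$ of degree $e - 1$ with $|p_j| \le q^{(e - j + a_\star)\ell}$, matching~\eqref{eq:Pcoefficientbound} with parameter $a_\star$.

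The hypothesis~\eqref{eq:deabconds} of Corollary~\ref{cor:good} simplifies via the identity
\begin{align*}
(d + a_\star)(d + b_\star) - (a_\star + 1)(b_\star + 1) = (d - 1)(d + 1 + a_\star + b_\star),
\end{align*}
to the condition $(d - 1)(d + 1 + a_\star + b_\star)\ell < r$, which is equivalent to the given hypothesis~\eqref{eq:alphaconds} on $\ell$ after the above substitutions. Now consider the hyperplane $W := \{(p_0, \ldots, p_{d-1}) \in \F_q(T)^d : \sum_j u_j p_j = 0\}$; a direct computation using $R_i = u_i e_0 - u_0 e_i$ shows $W$ is precisely the $\F_q(T)$-span of $R_1, \ldots, R_{d-1}$. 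If $P \notin W$, then $P, R_1, \ldots, R_{d-1}$ are $d$ linearly independent polynomials over $\F_q(T)$, so Corollary~\ref{cor:good} with $m = d - 1$ forces $e \le 1$, making $P$ a non-zero constant with $P(s) \equiv 0 \Mod{F}$; combined with $|P| < |F|$, this gives $P = 0$, a contradiction. Hence $P \in W$; since $P$ has a non-zero coefficient at position $j_0 := e - 1 \ge 1$, the $d - 2$ polynomials $\{R_i : 1 \le i \le d - 1, \ i \ne j_0\}$ (which span $W \cap \{p_{j_0} = 0\}$) are linearly independent from $P$, and Corollary~\ref{cor:good} with $m = d - 2$ now forces $e \le 2$, i.e., $\deg P \le 1$.

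Writing $P(x) = p_0 + p_1 x$, one must have $p_1 \ne 0$ (else $P$ is a non-zero constant, yielding the same contradiction as before). Setting $b := p_1$ and $a := -p_0$ gives $bs \equiv a \Mod{F}$, with $a, b$ both non-zero modulo $F$ since $|a|, |b| < |F|$. The bounds $|p_0| \le q^{(2 + a_\star)\ell}$ and $|p_1| \le q^{(1 + a_\star)\ell}$ translate to the target bounds after substituting the chosen $a_\star$. The main obstacle throughout is parameter balancing: the parameter $a_\star$ must be chosen to simultaneously satisfy the pigeonhole count, the hypothesis~\eqref{eq:deabconds} of Corollary~\ref{cor:good}, and deliver the precise bounds on $(a, b)$ in the conclusion—the specific form of~\eqref{eq:alphaconds} is calibrated exactly to make all three compatible.
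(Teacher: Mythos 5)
Your approach is genuinely different from the paper's, but it has a gap that I do not believe can be repaired.

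The paper's proof is an induction that upgrades the congruences $u_0 s^j \equiv u_j \Mod{F}$ to the exact identities $u_0 u_2 = u_1^2$, then $r_k a^{k+1} = u_{k+1} b$, and so on, using~\eqref{eq:alphaconds} to show both sides have absolute value $< q^r$. This yields the factorization $u_0 = r_{d-1}b^{d-1}$, $u_{d-1} = r_{d-1}a^{d-1}$ with $\gcd(a,b)=1$, and the bounds on $a,b$ are then read off as $|a| \le |u_{d-1}|^{1/(d-1)}$, $|b| \le |u_0|^{1/(d-1)}$. The $(d-1)$-th root is the mechanism that converts the hypothesis bounds on $u_0,u_{d-1}$ into the conclusion bounds.

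Your approach replaces this with a pigeonhole construction of a short polynomial $P_0$ (then using Corollary~\ref{cor:good} with the $R_i = u_i - u_0x^i$ to cut its degree to $1$). The linear-algebra bookkeeping around $W$, $j_0$, and the two invocations of Corollary~\ref{cor:good} is sound, but the quantitative balancing fails. To make the pigeonhole nontrivial you need $a_\star$ above $(r-d)/(\ell d) - (d+1)/2$; the resulting degree-$\le 1$ factor $P = p_0 + p_1 x$ then satisfies only $|p_1| \le q^{(1+a_\star)\ell}$. Taking $a_\star$ at its pigeonhole floor gives $|p_1| \lesssim q^{\,r/d - \ell(d-1)/2}$, and for this to be $\le q^{\ell c_0/(d-1) + d/(d-1)}$ (the required bound on $b$, with $c_0 = (d^2-(2\beta+1)d+2)/2$) one needs
\begin{align*}
\ell \;\gtrsim\; \frac{2\bigl[r(d-1) - d(2d-1)\bigr]}{d\bigl(2d^2 - (2\beta+3)d + 3\bigr)},
\end{align*}
a \emph{lower} bound on $\ell$ of order $r/d^2$. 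But~\eqref{eq:alphaconds} gives only an \emph{upper} bound of the same order, and for concrete parameters (e.g.\ $d=3$, $\beta=1$, $r$ large) the lower bound exceeds the upper bound, so no admissible $a_\star$ exists. Moreover, even the constraint~\eqref{eq:deabconds} for Corollary~\ref{cor:good} with $e=d$, $b_\star = c_0 - 1 + d/\ell$ and $a_\star$ at the pigeonhole floor reduces to roughly $\ell \lesssim r/d^4$, strictly stronger than~\eqref{eq:alphaconds}. The root cause is that your $(a,b)$ come from a pigeonhole polynomial unrelated to the $u_j$'s, so there is no mechanism analogous to the paper's $(d-1)$-th root to bring the bounds down; the hypothesis~\eqref{eq:alphaconds} is calibrated for the inductive factorization, not for a pigeonhole-plus-resultant argument.
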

\begin{proof}
We will first prove by induction that there exists coprime $a,b\in \F_q[T]$ such that for each $2\le j \le d-1$ there exists $r_j\in \F_q[T]$ such that 
\begin{align}
\label{eq:u01j}
u_0=r_jb^{j}, \quad u_1=r_jab^{j-1}, \quad u_j=r_j a^{j}.
\end{align}

Our base case is  $j=2$. We note that~\eqref{eq:uju0} and~\eqref{eq:alphaconds} imply that 
\begin{align*}
u_0u_2=u_1^{2}.
\end{align*}
Hence there exists $r_2,a,b\in \F_q[T]$ with $\gcd(a,b) = 1$ satisfying 
\begin{align*}
u_0=r_2b^2, \quad u_2=r_2a^2, \quad u_1=r_2ab.
\end{align*}
%%we have $u_1s \equiv u_2s^2$ so $u_1 \equiv u_2s$. Thus $u_0u_2 \equiv u_1su_2 \equiv u_1^2. $
Now for the inductive step, assume that for some integer $k < d-1$ we have that for all $j \leq k$, ~\eqref{eq:u01j} holds.  From~\eqref{eq:uju0}, we have 
\begin{align*}
u_1^{k+1}\equiv u_{k+1}u_0^{k} \Mod{F}.
\end{align*}
By~\eqref{eq:u01j} (with $j=k$), this implies 
\begin{align}
\label{eq:Fmod}
r_{k}a^{k+1}\equiv u_{k+1}b \Mod{F}.
\end{align}
Another application of our inductive hypothesis combined with~\eqref{eq:uju0} and~\eqref{eq:alphaconds} implies 
\begin{align*}
   |r_k a^{k+1}|\le |r_ka^{k}|^{(k+1)/k}\le |u_k|^{(k+1)/k}&\le q^{\ell((d^2-(2\beta+1)d+2)/2+d/\ell + k)(k+1)/k}\\
   &\le q^{\ell((d^2-(2\beta+1)d+2)/2+d/\ell + 2)3/2}\\
   &\leq q^{\ell(d^2-(2\beta+1)d+5) + \frac{3d}{2}}<q^{r}.
\end{align*}
A similar calculation shows that
\begin{align*}
|u_{k+1}b| <q^{r}.
\end{align*}
Combining the above with~\eqref{eq:Fmod}, we see that
$$
r_ka^{k+1}= u_{k+1}b,$$
and hence 
$$u_{k+1}=r_{k+1}a^{k+1}$$ 
with $r_{k+1}=r_k/b$ (and note that $r_{k+1}\in \F_{q}[T]$ since $a$ and $b$ are coprime). Substituting into~\eqref{eq:u01j} with $j=k$ this completes the inductive step. 

Applying~\eqref{eq:u01j} with $j=d-1$, by~\eqref{eq:uju0} we have 
\begin{align*}
bs\equiv a \Mod{F},  
\end{align*}
and 
\begin{align*}
&|a|\le |u_{d-1}|^{1/(d-1)}\le q^{\ell d(d+1-2\beta)/(2d-2) + d/(d-1)},\\
&|b|\le |u_0|^{1/(d-1)}\le q^{\ell(d^2-(2\beta+1)d+2)/(2d-2) + d/(d-1)},
\end{align*}
which completes the proof.

\end{proof}

\section{Counting solutions to equations over \texorpdfstring{$\F_q[T]$}{} }\label{sec:counting_global}

The main focus of this paper is to develop techniques useful for counting solutions to congruences in $\F_q[T]/\langle F\rangle $ and often this is achieved by lifting congruences modulo $F$ to equations over $\F_q[T]$.  Hence we require some preliminary results regarding counting solutions to equations over $\F_q[T]$, which are presented in this section. 

First we have the following very important bound from \cite[Lemma 1]{CS2013} which will be used in multiple proofs. 
\begin{lemma}
\label{lem:divisorbound}
For any $x\in \F_q[T]$, 
%HERE
 the number of divisors of $x$ is $q^{o(\deg x)}$.
\end{lemma}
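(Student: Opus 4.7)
The plan is to adapt the classical proof of the divisor bound over $\Z$ (which gives $d(n)=n^{o(1)}$) to the setting of $\F_q[T]$, using the fact that there are relatively few monic irreducibles of each bounded degree.

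First I would factor $x = c \prod_{i=1}^{s} P_i^{a_i}$ uniquely with $c \in \F_q^*$ and $P_i$ distinct monic irreducibles in $\F_q[T]$. Every divisor of $x$ is of the form $c' \prod P_i^{b_i}$ with $c' \in \F_q^*$ and $0 \le b_i \le a_i$, so the number of divisors equals $(q-1)\prod_{i=1}^{s}(a_i+1)$. Since the prefactor $(q-1)$ is bounded, it suffices to show that $\prod(a_i+1) = q^{o(\deg x)}$.

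The key step is to split the product according to the degree of $P_i$. Fix an arbitrary $\varepsilon > 0$ and choose an integer $D=D(\varepsilon)$ large enough that $q^{\varepsilon D} \ge 2$. For indices $i$ with $\deg P_i > D$ one has the crude inequality $a_i+1 \le 2^{a_i} \le q^{\varepsilon a_i \deg P_i}$, so
\[
\prod_{\deg P_i > D}(a_i+1) \;\le\; q^{\varepsilon \sum_{\deg P_i > D} a_i \deg P_i} \;\le\; q^{\varepsilon \deg x}.
\]
For indices $i$ with $\deg P_i \le D$, the number of such monic irreducibles $P_i$ is at most $\sum_{k \le D} q^k \le 2q^D$, a constant depending only on $\varepsilon$; and each factor $a_i+1$ is trivially bounded by $\deg x + 1$. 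Therefore the contribution from small-degree irreducibles is at most $(\deg x + 1)^{2q^D}$, which is $q^{o(\deg x)}$ since $D$ is fixed.

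Combining the two parts gives $d(x) \le q^{\varepsilon \deg x + o(\deg x)}$, and since $\varepsilon > 0$ was arbitrary, we obtain $d(x) = q^{o(\deg x)}$. The only subtlety I anticipate is keeping track of the dependence of the implicit constants on $\varepsilon$, but since we may let $\varepsilon \to 0$ slowly (for instance, $\varepsilon = 1/\log \deg x$) the $o(\deg x)$ conclusion is clean and no delicate quantitative tracking is required.
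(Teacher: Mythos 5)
Your argument is correct, and it is the standard adaptation to $\F_q[T]$ of the classical proof that $d(n) = n^{o(1)}$ over $\Z$: split the prime (irreducible) factors of $x$ at a threshold degree $D$, bound the contribution of the large-degree factors by $q^{\varepsilon \deg x}$ via the inequality $a_i+1 \le 2^{a_i} \le q^{\varepsilon a_i \deg P_i}$, and bound the small-degree contribution by $(\deg x + 1)^{O_q(q^D)}$ using that there are $O(q^D)$ monic irreducibles of degree at most $D$. Each step checks out, and the final passage from ``for every $\varepsilon>0$, $d(x)\le q^{\varepsilon\deg x + o_\varepsilon(\deg x)}$'' to ``$d(x)=q^{o(\deg x)}$'' is legitimate (e.g.\ by choosing $\varepsilon\to 0$ slowly as you note).

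For context: the paper does not actually prove this lemma; it is cited directly from \cite[Lemma 1]{CS2013} (Cilleruelo--Shparlinski), where it is also established by essentially the same two-range argument you give. So your blind reconstruction matches the known proof, and there is no gap to report. One small cosmetic point: whether one counts all divisors (giving the harmless extra factor $q-1$) or only monic divisors is immaterial here, as you observe, but it is worth stating explicitly which convention is in force so that the identity $d(x)=(q-1)\prod_i(a_i+1)$ is unambiguous.
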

 This is analogous to the classical bound on the divisor function $\tau(n) \ll n^{o(1)}$. 
 
Next, the following is~\cite[Lemma 2.6]{SZ2018}.
\begin{lem}\label{lem:SOI:inverses_equality}
For a fixed positive integer $k$ and $\beta\in \overline{\F}_{q}(T)$, the number of solutions to 
\begin{align*}
    \frac{1}{x_1+\beta}+...+\frac{1}{x_k+\beta} = \frac{1}{x_{k+1}+\beta}+...+\frac{1}{x_{2k}+\beta}
\end{align*}
with $(x_1,...,x_{2k}) \in \cI_m^{2k}$ is bounded above by 
$q^{km + o(m)}.$
\end{lem}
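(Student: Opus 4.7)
The plan is to reduce the equation to a polynomial identity, isolate the diagonal contribution, and then prove the off-diagonal bound by induction on $k$ using a single-variable pairing together with the function-field divisor bound (Lemma~\ref{lem:divisorbound}).

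Setting $P(y) = \prod_{i=1}^{k}(y + x_i)$ and $Q(y) = \prod_{i=k+1}^{2k}(y+x_i)$, after clearing denominators the given equation becomes the polynomial identity
\[
P'(\beta)Q(\beta) = P(\beta)Q'(\beta),
\]
which says that $\beta$ is a critical point of the rational function $P/Q$. Away from the exceptional case $Q(\beta)=0$ (which forces $\beta\in\F_q[T]$ to coincide with $-x_i$ for some $i>k$ and hence contributes a negligible lower-order term), setting $c = P(\beta)/Q(\beta)$ the identity is equivalent to $(y-\beta)^2$ dividing $P(y)-cQ(y)$. The diagonal contribution from multiset-equal tuples $\{x_1,\dots,x_k\}=\{x_{k+1},\dots,x_{2k}\}$ (for which $P=Q$ and $c=1$) accounts for exactly $k!\,q^{mk}$ ordered tuples, already within the target bound.

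For the off-diagonal part I would induct on $k$. The base case $k=1$ forces $x_1=x_2$, giving $q^m$ solutions. For $k\ge 2$, fix $(x_2,\dots,x_k,x_{k+2},\dots,x_{2k})\in\cI_m^{2k-2}$ and view the remaining equation as $1/(x_1+\beta)-1/(x_{k+1}+\beta)=c'$ where $c'$ is determined by the fixed variables. When $c'=0$ we have $x_1=x_{k+1}$, contributing a factor of $q^m$, and the residual equation on the remaining $2k-2$ variables is exactly the $(k-1)$-problem; the inductive hypothesis yields $q^m\cdot q^{(k-1)m+o(m)} = q^{km+o(m)}$. When $c'\ne 0$, cross-multiplying gives the factorization
\[
(x_1-r)(x_{k+1}-a) = b
\]
in $\overline{\F}_q(T)$, for explicit constants $a,b,r$ of height $O(m)$ depending on $\beta$ and $c'$, and Lemma~\ref{lem:divisorbound} bounds the number of such factorizations by $q^{o(m)}$ per value of $c'$.

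The main obstacle is controlling the aggregate contribution from the $c'\ne 0$ case: naively summing $q^{o(m)}$ over the $q^{(2k-2)m}$ choices of fixed variables yields $q^{(2k-2)m+o(m)}$, which for $k\ge 3$ exceeds the target $q^{km+o(m)}$. To overcome this I expect to stratify the sum by the height of $c'$ and exploit that each height class admits only $q^{o(m)}$ distinct values of $c'$, combining this with a Cauchy–Schwarz step over those values and a careful book-keeping of the $c'=0$ recursion so that the excess is absorbed into the $o(m)$-error term. The resultant-based tools of Section~\ref{sec:misc_prelim} should be useful here for bounding the polynomial heights that arise upon clearing denominators in $c'$.
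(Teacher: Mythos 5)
The paper does not prove this lemma; it cites it as \cite[Lemma~2.6]{SZ2018}, so there is no in-paper proof to compare against. Judging your argument on its own merits: the reduction to the Wronskian condition $P'(\beta)Q(\beta)=P(\beta)Q'(\beta)$ and the identification of the diagonal contribution are fine, but the inductive off-diagonal analysis contains a gap that you yourself flag, and the proposed repair does not close it.

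The difficulty is structural, not a matter of book-keeping. After fixing the $2k-2$ variables $(x_2,\dots,x_k,x_{k+2},\dots,x_{2k})$, the quantity $c'$ is \emph{determined}, and for each such tuple the factorisation step yields at most $q^{o(m)}$ pairs $(x_1,x_{k+1})$. Summing over all $q^{(2k-2)m}$ choices of the fixed tuple therefore gives $q^{(2k-2)m+o(m)}$, and no per-tuple refinement can improve on this since the $q^{o(m)}$ factor is already essentially optimal per tuple. Your suggested fix rests on the assertion that ``each height class admits only $q^{o(m)}$ distinct values of $c'$,'' but this is false: $c'$ is a sum/difference of $2k-2$ Kloosterman fractions $1/(x_i+\beta)$ with $x_i$ ranging over $\cI_m$, and after clearing denominators the numerators and denominators are polynomials in $\beta$ with coefficients of degree $O(m)$, so the number of distinct $c'$ in a given height range grows exponentially in $m$, not polynomially. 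Cauchy--Schwarz also runs the wrong way here: you would be bounding a sum by $\sqrt{\text{(number of classes)}\cdot\sum N(c')^2}$, which does not beat the trivial $\sum N(c')=q^{(2k-2)m}$. For $k\ge 3$ this exceeds the target $q^{km+o(m)}$, and the induction does not close.

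There is also a secondary issue in the appeal to Lemma~\ref{lem:divisorbound}: the factorisation $(x_1-r)(x_{k+1}-a)=b$ holds in $\overline{\F}_q(T)$, while $x_1,x_{k+1}\in\F_q[T]$ and $r,a,b$ are algebraic over $\F_q(T)$. The divisor bound concerns divisors in $\F_q[T]$ and does not directly apply; one must first pass to norms $\Nm(x_1-r)\in\F_q[T]$ (after fixing an embedding and clearing denominators), check that $\Nm(x_1-r)$ divides a fixed polynomial of degree $O(m)$, and then argue that each norm value yields only $O(1)$ choices of $x_1$. This is fixable but is not addressed in the proposal.

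To salvage the argument you should work globally with the Wronskian $W(y)=P'(y)Q(y)-P(y)Q'(y)$, a polynomial of degree at most $2k-2$ in $y$ with coefficients bilinear in the elementary symmetric functions of the $x_i$, rather than fixing variables two at a time. The condition $W(\beta)=0$ is equivalent to $g\mid W$ in $\F_q(T)[y]$, where $g$ is the minimal polynomial of $\beta$. If $W\equiv 0$ then $P$ and $Q$ have proportional logarithmic derivatives, which forces the multisets of roots to agree up to characteristic-$p$ phenomena, and this gives the $O(q^{km})$ diagonal count. If $W\not\equiv 0$ the divisibility $g\mid W$ is a strong rigid constraint (in particular it is vacuous once $\deg g>2k-2$) that should be analysed directly; the two-variable pairing throws away this structure and cannot recover it.
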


We also require the following result from \cite[Theorem 1]{S2017}.
\begin{lem}\label{lem:Bombieri_pila}
Fix a positive integer $d$. Let $\Phi(x,y) \in \F_q[T][x,y]$ have degree $d$ and be irreducible over $\F_q[T]$. Then the number of solutions to 
$$\Phi(x,y) = 0, ~~(x,y) \in \cI_m^2$$
is bounded above by $q^{m/d + o(m)}$. 
\end{lem}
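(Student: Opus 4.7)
This is a function-field analogue of the Bombieri-Pila theorem, and my plan is to adapt the classical determinant method to this setting. The trivial bound comes from fixing $x \in \cI_m$ and observing that $\Phi(x, y)$, viewed as a polynomial in $y$, has degree at most $d$, giving $O(q^m)$ solutions; the aim is to save a factor of roughly $q^{m(1 - 1/d)}$.

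The plan is to partition $\cI_m^2$ into $O(q^{2(m - \ell)})$ sub-boxes of side $q^\ell$ for a parameter $\ell \leq m$ to be optimised. Within each sub-box, after a polynomial translation, any solution $(x, y)$ of $\Phi(x,y)=0$ has $|x|, |y| < q^\ell$. I would then produce an auxiliary polynomial $G(x, y) \in \F_q[T][x, y]$ of total degree at most $D = d - 1$ vanishing at every solution in the sub-box. This is achieved by linear algebra: consider the matrix whose rows are indexed by solutions and whose columns are indexed by monomials $x^i y^j$ with $i + j \leq D$; a nontrivial left null vector exists once the number of solutions exceeds $\binom{D+2}{2}$, and applying Cramer's rule to a maximal minor controls the coefficient heights of $G$ by $O(\ell D)$.

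Since $\Phi$ is irreducible of degree $d$ and $\deg G < d$, we have $\gcd(\Phi, G) = 1$ in $\F_q(T)[x, y]$, so $\Res_y(\Phi, G) \in \F_q[T][x]$ is a nonzero polynomial of controlled $x$-degree. A Sylvester-matrix height bound in the spirit of Lemma~\ref{lem:resultantbound}, together with a Bezout-type count, then shows that each sub-box contains $O(dD) = O(d^2)$ solutions of $\Phi = 0$. Summing over the $O(q^{2(m - \ell)})$ sub-boxes yields a total of $O(d^2 \cdot q^{2(m - \ell)})$ solutions, and optimising $\ell$ subject to the constraint that the auxiliary $G$ of degree $d-1$ can indeed be produced (which requires the sub-box scale $q^\ell$ to be small enough for the relevant Vandermonde-type determinant to have nonpositive $T$-degree) delivers the target exponent $m/d + o(m)$.

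The main obstacle is ensuring that the auxiliary polynomial $G$ of degree strictly less than $d$ actually exists for sub-boxes of the appropriate scale. In the $\Z$-setting this relies on the observation that an integer-valued Vandermonde-type determinant with absolute value less than $1$ must vanish; in $\F_q[T]$ the analogue is that an $\F_q[T]$-valued determinant of negative $T$-degree must be zero. Making this quantitative, and verifying that the resulting degree inequalities balance exactly to produce the exponent $1/d$ (as opposed to a weaker exponent), is where the bulk of the technical work lies; a minor subtlety is the separate handling of solutions sharing an $x$-coordinate, which contribute only $O(d)$ per value of $x$ and are absorbed into the final bound.
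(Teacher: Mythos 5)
The paper does not prove Lemma~\ref{lem:Bombieri_pila}; it is imported verbatim from \cite[Theorem~1]{S2017}, so there is no internal proof to measure you against. Your outline follows the same underlying strategy as that source — the Bombieri--Pila determinant method transported to $\F_q[T]$, with the $\Ki$-degree of a determinant playing the role of the absolute value of an integer — so at a high level you are on the right track. As written, though, the sketch has gaps that are more than bookkeeping.

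The choice $D = d-1$ for the auxiliary degree cannot deliver the exponent $1/d$. Already for the model curve $y = P(x)$ with $\deg P = d$ one can do the Wronskian/Vandermonde factorisation explicitly: parametrising by $t = x - x_0$ inside a side-$q^\ell$ sub-box, the constraint $|P(x_0+t) - P(x_0)| < q^\ell$ confines $t$ to $|t| < q^{\ell - \deg a_d - (d-1)\deg x_0}$, and after factoring out the Vandermonde the determinant has non-negative $T$-degree unless $\ell < m\bigl(1 - \tfrac{4(d+1)}{3d(D+3)}\bigr) + o(m)$. That forces at least $q^{4(d+1)m/(3d(D+3))}$ relevant sub-boxes, each contributing up to $dD$ points; the exponent $\tfrac{4(d+1)}{3d(D+3)}$ drops to $1/d$ only when $D \geq (4d-5)/3$, which exceeds $d-1$ for every $d \geq 3$ (e.g.\ $d=3$, $D=2$ yields exponent $16/45 > 1/3$). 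In Bombieri--Pila and in \cite{S2017}, $D$ grows with $1/\epsilon$ — that is precisely the source of the $o(m)$ loss — and once $D \geq d$ one must work with monomials spanning a basis modulo $\langle \Phi \rangle$ to keep $\gcd(G,\Phi) = 1$, a complication your $D < d$ shortcut was meant to avoid.

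Separately, ``summing over the $O(q^{2(m-\ell)})$ sub-boxes'' over-counts by a factor $q^{m-\ell}$: since the curve is one-dimensional, only $O(q^{m-\ell})$ of the sub-boxes meet it, and that saving is load-bearing. With $q^{2(m-\ell)}$ and the $\ell$ permitted by the determinant estimate (roughly $\ell \lesssim m/2$ for $d=2$) the bound collapses to $q^{2(m-m/2)} = q^m$, no better than trivial. Finally, you correctly flag that forcing the determinant to vanish — i.e.\ proving the auxiliary $G$ actually exists at the required scale — is ``where the bulk of the technical work lies''; that is exactly the content of \cite{S2017}, and it requires a careful local power-series parametrisation of the curve in $\Ki$ together with a case analysis where the chosen coordinate degenerates, none of which is present here.
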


The rest of this section is now dedicated to applying certain orthogonality relations that allow us to count solutions to certain systems of equations. 

It is well established that $\Ki$ is locally compact. One way to see this is that the valuation ring of $\Ki$ is $\F_q[[1/T]]$, whose unique maximal ideal is $\langle 1/T \rangle$. Now we have that $\F_q[[1/T]]/\langle 1/T \rangle \cong \F_q$, which implies that $\Ki$ is locally compact by \cite[Ch. 2, Prop. 1]{S2013}.
This now implies, by \cite{H1933}, that there exists a unique Haar measure $\mu$ on $\Ki$, which we normalize such that $\int_{B_0} 1 d\mu(\alpha) = 1$. Recall that a Haar measure is invariant under translation. 

For any $x \in \Ki$ if we have 
$$x = \sum_{i= -\infty}^{n}a_iT^i $$
for simplicity we will write $[x]_{-1} := a_{-1}$. We thus define the additive character on $\Ki$, 
$$e(x) = \exp\bigg{(}\frac{2\pi i}{p}\Tr[x]_{-1} \bigg{)} $$
where $\Tr$ is the absolute trace of $\F_q$ and $p$ is the characteristic of $\F_q$. This character satisfies the orthogonality relation (see \cite[Theorem 3.5]{H1966})
\begin{align*}
    \int_{B_0}e(x\alpha)d\mu(\alpha) = 
\begin{cases}
0, &\text{ if } |x| \geq 1\\
1, &\text{ if }|x| < 1
\end{cases}
\end{align*}
which in particular implies (as stated in \cite[Equation 17.1]{W2019})
\begin{align}\label{eq:fourier_orthogonality}
    \int_{B_0}e(x\alpha)d\mu(\alpha) = 
\begin{cases}
0, &\text{ if } x \in \F_q[T]\setminus\{0\}\\
1, &\text{ if }x = 0.
\end{cases}
\end{align}
The following is a special case of \cite[Theorem 17.1]{W2019}, and can be regarded as a strong version of Vinogradov's mean value theorem for function fields. 
\begin{lem}\label{lem:wooley_VMVT}
Let $s$ and $k$ be fixed positive integers such that $s \leq k(k+1)/2$ and suppose $\mathcal{S} \subseteq \cI_{n}$ for a positive integer $n$. Provided that $\textup{char}(\F_q) > k$ one has 
$$\int_{B_0^k}\bigg{|}\sum_{x \in \mathcal{S}}e(\alpha_1x + ... + \alpha_kx^k) \bigg{|}^{2s}d\mu(\alpha_1)...d\mu(\alpha_k) \leq q^{o(n)}|\mathcal{S}|^s. $$
This immediately implies, by (\ref{eq:fourier_orthogonality}), that 
\begin{align*}
    \big{|}\big{\{} (x_1,..&.,x_{2s}) \in \cS^{2s} : \\
    &x_1^j + ... +x_{s}^j = x_{s+1}^j + ... + x_{2s}^j, ~1 \leq j \leq k\big{\}}\big{|} \leq ~q^{o(n)}|\cS|^s.
\end{align*}
\end{lem}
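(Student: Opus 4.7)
The plan is straightforward since the first displayed inequality is asserted to be a special case of Wooley's Theorem~17.1 in \cite{W2019}, so the real work consists of matching parameters and then deducing the combinatorial count by orthogonality. Wooley's theorem gives, in the diagonal regime $s \le k(k+1)/2$ and provided $\mathrm{char}(\F_q) > k$, the mean value bound
\begin{align*}
\int_{B_0^k} \left| \sum_{x \in \cI_n} \e(\alpha_1 x + \dots + \alpha_k x^k) \right|^{2s} d\mu(\alpha_1)\cdots d\mu(\alpha_k) \le q^{o(n)} q^{ns},
\end{align*}
which is the $\F_q[T]$ analogue of Vinogradov's mean value theorem proved via efficient congruencing. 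The characteristic restriction is what ensures the Vandermonde-type determinants arising at the base step of the efficient congruencing iteration are invertible. To pass from $\cI_n$ to an arbitrary subset $\cS \subseteq \cI_n$, one inserts the indicator function $\mathbf{1}_{\cS}$ as a bounded coefficient sequence; since Wooley's argument is robust to bounded weights, the resulting bound holds with $|\cS|^s$ in place of $q^{ns}$.

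For the second assertion I would simply expand the $2s$-th power and apply the orthogonality relation~(\ref{eq:fourier_orthogonality}). Writing
\begin{align*}
\left| \sum_{x \in \cS} \e(\alpha_1 x + \dots + \alpha_k x^k) \right|^{2s}
= \sum_{(x_1,\dots,x_{2s}) \in \cS^{2s}} \e\!\left( \sum_{j=1}^k \alpha_j D_j \right),
\end{align*}
where $D_j = x_1^j + \dots + x_s^j - x_{s+1}^j - \dots - x_{2s}^j \in \F_q[T]$, and integrating over $(\alpha_1,\dots,\alpha_k) \in B_0^k$, each factor $\int_{B_0} \e(\alpha_j D_j)\, d\mu(\alpha_j)$ equals $1$ when $D_j = 0$ and vanishes otherwise. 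Consequently the integral is exactly the number of $2s$-tuples $(x_1,\dots,x_{2s}) \in \cS^{2s}$ satisfying the system $D_1 = \dots = D_k = 0$, which is the count appearing on the right-hand side of the second displayed inequality. Combining this with the first bound yields the stated count.

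The only genuine point requiring care — not so much an obstacle as a verification — is that Wooley's theorem really does accommodate arbitrary subsets $\cS \subseteq \cI_n$ rather than only the full interval. This is expected, since the integer analogue is well known to hold uniformly for bounded coefficient sequences, but one should pin down the precise statement of \cite[Theorem~17.1]{W2019}. No further ideas are needed.
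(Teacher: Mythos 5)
Your proposal matches the paper's treatment exactly: the paper does not supply an independent proof but cites \cite[Theorem~17.1]{W2019} for the mean-value bound and then observes that the combinatorial count follows by expanding the $2s$-th power and invoking~(\ref{eq:fourier_orthogonality}). Your orthogonality computation is the standard one and is precisely what the paper has in mind; the remark about verifying that Wooley's statement accommodates arbitrary subsets $\cS \subseteq \cI_n$ (rather than only the full interval) is a legitimate bookkeeping point, but since the paper asserts the lemma is a ``special case'' of Wooley's theorem, no further argument is expected.
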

For any $\lambda = (\lambda_1,...,\lambda_k) \in \F_q[T]^k$ we define $J_{\lambda,k,s}(\cS)$ to count the number of solutions to the system 
 \begin{align}\label{eq:J_system}
     x_1^j + ... + x_{2s}^j = \lambda_j, ~ 1\leq j \leq k, ~x_i \in \cS.
 \end{align}
The following is now implied by the previous result. 
\begin{cor}
\label{cor:wooley}
Let $\cS \subseteq \cI_n$ for some positive integer $n$. For fixed positive integers $s$, $k$  and $\lambda \in \F_q[T]^k$, if $\textup{char}(\F_q) > k$ and $s \leq k(k+1)/2$ one has 
$$J_{\lambda,k,s}(\cS) \leq q^{o(n)}|\mathcal{S}|^s. $$
\end{cor}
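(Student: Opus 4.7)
The plan is to represent $J_{\lambda,k,s}(\cS)$ as a Fourier integral over $B_0^k$ and then reduce directly to Lemma~\ref{lem:wooley_VMVT} via the triangle inequality.

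First, since $\cS \subseteq \cI_n \subseteq \F_q[T]$ and each $\lambda_j \in \F_q[T]$, the quantity $x_1^j + \ldots + x_{2s}^j - \lambda_j$ lies in $\F_q[T]$ for every choice of $(x_1,\ldots,x_{2s}) \in \cS^{2s}$. Hence~\eqref{eq:fourier_orthogonality} encodes the equation $x_1^j + \ldots + x_{2s}^j = \lambda_j$ as an integral over $\alpha_j \in B_0$. Multiplying these indicator representations for $1 \le j \le k$, summing over $(x_1,\ldots,x_{2s}) \in \cS^{2s}$, and interchanging sum with integral yields
$$J_{\lambda,k,s}(\cS) = \int_{B_0^k} e\left(-\sum_{j=1}^k \alpha_j \lambda_j\right) \left(\sum_{x \in \cS} e(\alpha_1 x + \ldots + \alpha_k x^k)\right)^{2s} d\mu(\alpha_1)\cdots d\mu(\alpha_k),$$
where the inner $2s$-fold sum has been factored in the usual way by writing $e(\alpha_j(x_1^j+\ldots+x_{2s}^j)) = \prod_i e(\alpha_j x_i^j)$ and separating variables.

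Taking absolute values, the phase $e(-\sum_j \alpha_j \lambda_j)$ has modulus one, so the triangle inequality bounds $J_{\lambda,k,s}(\cS)$ by the mean value integral appearing in Lemma~\ref{lem:wooley_VMVT}. Under the hypotheses $s \le k(k+1)/2$ and $\textup{char}(\F_q) > k$, that lemma gives the desired bound $q^{o(n)} |\cS|^s$.

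There is no real obstacle to the argument; the key observation is that the unimodular factor $e(-\sum_j \alpha_j \lambda_j)$ drops out under the triangle inequality, so the uniform-in-$\lambda$ bound follows from the same $\lambda$-independent integral estimate used to handle the special case $\lambda = 0$ (i.e.\ Vinogradov's system). The only point to verify carefully is that the arguments of $e(\cdot)$ do lie in $\F_q[T]$, so that~\eqref{eq:fourier_orthogonality} applies as an honest indicator rather than the weaker statement about $|x| < 1$.
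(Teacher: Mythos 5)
Your argument is correct and follows essentially the same route as the paper: encode the system via the orthogonality relation~\eqref{eq:fourier_orthogonality}, factor the $2s$-fold sum into a power of a single exponential sum, drop the unimodular phase $e(-\sum_j\alpha_j\lambda_j)$ by the triangle inequality, and invoke Lemma~\ref{lem:wooley_VMVT}. No gap; the only deviation from the paper is cosmetic (the paper's proof uses the index $d$ where $k$ is meant).
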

\begin{proof}
Let $\lambda = (\lambda_1, ..., \lambda_k)$. By~equation~(\ref{eq:fourier_orthogonality}),
\begin{align*}
   &J_{\lambda,k,s}(\cS) \\
   &~~= \sum_{x_1,...,x_{2s} \in \mathcal{S}}\int_{B_0^d}e\bigg{(}\sum_{j=1}^d\alpha_j\bigg{(}\sum_{i=1}^s(x_i^j+x_{s+i}^j) - \lambda_j\bigg{)}\bigg{)}d\mu(\alpha_1)...d\mu(\alpha_d)\\
   &~~= \int_{B_0^d}\bigg{(}\sum_{x \in \cS}e(\alpha_1x + ... + \alpha_dx^d)\bigg{)}^{2s}e\bigg{(}-\sum_{j=1}^d\alpha_j\lambda_j \bigg{)}d\mu(\alpha_1)...d\mu(\alpha_d)\\
   &~~\leq \int_{B_0^d}\bigg{|}\sum_{x \in \cS}e(\alpha_1x + ... + \alpha_dx^d)\bigg{|}^{2s}d\mu(\alpha_1)...d\mu(\alpha_d)
\end{align*}
and now the result follows by Lemma \ref{lem:wooley_VMVT}.
\end{proof}

\section{Background on Lattices }
\label{sec:lattices}

\subsection{Mahler's results } 
We will now provide the necessary preliminaries regarding lattices and convex bodies in $\Ki^d$, and the rest of the paper will build upon and apply the ideas presented here. As previously mentioned, it was Mahler \cite{M1941} who initially developed these ideas, as an analogue to Minkowski's results \cite{M1910}. 

We fix a positive integer $d$ and define an $\F_q[T]$-lattice $\cL$ in $\Ki^d$ to be an  $\F_q[T]$-submodule of $\Ki^d$ of the form 
$$\cL = \text{span}_{\F_q[T]}\{v_1, ..., v_d\} $$
where $\{v_1, ..., v_d\}$ is an $\Ki$-basis of $\Ki^d$. Equivalently, a lattice is of the form 
$$\cL= A_\cL\F_q[T]^d $$
for some invertible $A_\cL \in \text{GL}(\Ki^d)$. For ease of notation we write 
$$\det \cL = |\det A_\cL|. $$

We similarly define a convex body $\cB$ in $\Ki^d$ to be a set of the form 
$$\cB = U_\cB B_1^d $$
for some $U_\cB \in \text{GL}(\Ki^d)$ and where $B_1 = B_1(0)$ is as in (\ref{eq:openball}).  Note that $\cB$ forms a $B_1$-module, so in particular a convex body is closed under addition. This is different than the initial definition used by Mahler in \cite[page 491]{M1941}, but he proves that these are equivalent \cite[page 498]{M1941}. We then define 
$$\vol \cB = |\det U_\cB|. $$
Although we measure volume here using determinants, \cite{C2012} shows that this can all be equivalently framed in terms of the Haar measure $\mu$ as introduced in Section \ref{sec:counting_global}.
Next, given such a convex body $\cB$ we associate to it a norm function $N_\cB$ given by 
$$N_\cB(x) = \|U_\cB^{-1}x\|. $$
Note that 
\begin{align*}
    \cB = \{x \in \Ki^d: N_\cB(x) \leq 1\}.
\end{align*}
We could also write 
\begin{align}\label{eq:normfunction_inf}
N_\cB(x) 
    &= \min_{\substack{\xi \in \Ki \\ \|U_\cB^{-1}x\| \leq |\xi|}} |\xi|
    = \min_{\substack{\xi \in \Ki\\\ U_\cB^{-1}x \in \xi B_1}}|\xi|
    ~= \min_{\substack{ \xi \in \Ki  \\  x \in \xi\cB}}|\xi|
\end{align}
which is analogous to definitions that typically appear when dealing with lattices and convex bodies in Euclidean space.

We now define the successive minima of a lattice $\cL$ with respect to a convex body $\cB$. Let 
$$\sigma_1 = \min_{\substack{x \neq 0 \\ x \in \cL}}N_\cB(x) $$
and let $x^{(1)}$ be a point at which this minimum is attained. Then recursively define 
$$\sigma_i = \min\{N_\cB(x)~:~x \in \cL \text{ and } x \text{ is } \F_q[T] \text{ independent of } x^{(1)},...,x^{(i-1)}\} $$
and $x^{(i)}$ a point at which this minimum is attained. 
We call $\sigma_1,...,\sigma_d$ the successive minima of $\cL$ with respect to $\cB$, and $x^{(1)},...,x^{(d)}$ their corresponding vectors.

If we let $\ell$ be the largest positive integer such that $\sigma_i \leq 1$ for $i \leq \ell$, then the definition of successive minima almost immediately implies 
$$\cL \cap \cB \subseteq \text{span}_{\F_q[T]}\{x^{(1)},...,x^{(\ell)}\}. $$

The following is proven by Mahler in \cite[equations (24) and (25)]{M1941}.

\begin{lem}\label{lem:Mahler}
Let $\cB$ be a convex body. If $\sigma_1,...,\sigma_d$ are the successive minima of $\F_q[T]^d$ with respect to $\cB$ then $x^{(1)},...,x^{(d)}$ form an $\F_q[T]$-basis for $\F_q[T]^d$ and 
$$\sigma_1...\sigma_d = \frac{1}{\vol \cB}. $$
\end{lem}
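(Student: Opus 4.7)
The plan is to prove the two assertions in sequence, using the non-archimedean structure of $\Ki$: every $c \in \Ki$ admits a canonical decomposition $c = p + f$ with $p \in \F_q[T]$ and $|f| < 1$, with no ``carries'' to worry about. This is the key ingredient distinguishing $\Ki$ from $\R$ and ultimately yields an equality (rather than only a two-sided inequality as in~\eqref{eq:Minkowski_second_theorem}).

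For the basis property, I would argue by contradiction. Suppose $x^{(1)}, \ldots, x^{(d)}$ do not $\F_q[T]$-span $\F_q[T]^d$ and pick $y \in \F_q[T]^d$ outside their span. Since $\{x^{(i)}\}$ is an $\Ki$-basis of $\Ki^d$, write $y = \sum_{i=1}^d c_i x^{(i)}$ with $c_i \in \Ki$, and let $j$ be the largest index with $c_j \notin \F_q[T]$. Decomposing $c_i = p_i + f_i$ with $p_i \in \F_q[T]$ and $|f_i| < 1$ (so $f_i = 0$ for $i > j$ and $f_j \neq 0$), I form
\[ z = y - \sum_{i=1}^d p_i x^{(i)} = \sum_{i \le j} f_i x^{(i)} \in \F_q[T]^d. \]
Then $z \neq 0$, and by $\Ki$-linear independence of the $x^{(i)}$ together with $f_j \neq 0$, the element $z$ is $\F_q[T]$-linearly independent of $x^{(1)}, \ldots, x^{(j-1)}$. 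However, $N_\cB(z) \le \max_{i \le j}|f_i|\sigma_i \le q^{-1}\sigma_j < \sigma_j$, contradicting the minimality of $\sigma_j$.

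For the product formula, let $M = [x^{(1)}|\dots|x^{(d)}]$. The basis property gives $\det M \in \F_q^*$, hence $|\det M| = 1$, so $|\det(U_\cB^{-1}M)| = 1/\vol \cB$. Setting $v_i = U_\cB^{-1} x^{(i)}$, we have $\|v_i\| = N_\cB(x^{(i)}) = \sigma_i$, and the non-archimedean Hadamard inequality $|\det(v_1,\ldots,v_d)| \le \prod_i \|v_i\|$ yields $\prod_i \sigma_i \ge 1/\vol \cB$. For the matching upper bound I would establish that $\{x^{(i)}\}$ is $N_\cB$-orthogonal, namely $N_\cB\bigl(\sum_{i=1}^d a_i x^{(i)}\bigr) = \max_{1 \le i \le d}|a_i|\sigma_i$ for all $a_i \in \Ki$. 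Granted orthogonality, after rescaling $\tilde v_i = v_i/\pi_i$ with $|\pi_i| = \sigma_i$ so that $\|\tilde v_i\| = 1$, the reductions of the $\tilde v_i$ modulo $B_1$ are $\F_q$-linearly independent vectors in $\F_q^d$ (any nontrivial $\F_q$-relation would force a combination of $N_\cB$-value strictly below what orthogonality permits). This forces $|\det(\tilde v_1, \ldots, \tilde v_d)| = 1$ and hence $|\det(v_1, \ldots, v_d)| = \prod_i \sigma_i$.

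The main obstacle is the orthogonality step, since its analogue over $\R$ is genuinely false. The plan to establish it is to run an argument structurally identical to that for the basis property: if orthogonality fails for some $(a_1, \ldots, a_d)$ with $j$ the largest index achieving the maximum $|a_j|\sigma_j$, then suitable polynomial rounding of the coefficients $a_i$ produces a nonzero element of $\F_q[T]^d$ that is $\F_q[T]$-independent of $x^{(1)}, \ldots, x^{(j-1)}$ and has $N_\cB$-value strictly less than $\sigma_j$, contradicting its minimality.
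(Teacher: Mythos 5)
The paper offers no proof of this lemma; it is attributed directly to Mahler's 1941 paper (equations~(24)--(25)), so there is no in-paper argument to compare against. Your blind proof is correct. The basis-property argument by polynomial rounding is exactly the right non-archimedean move, and the product formula via $N_\cB$-orthogonality followed by lifting the determinant through the residue field $\F_q$ is a clean route to Mahler's equality. The step you leave only as a plan, namely that orthogonality follows from the same rounding contradiction, does go through without obstruction: after scaling so that $|a_j|=1$ at the largest index $j$ maximizing $|a_i|\sigma_i$, the polynomial parts $p_i$ of the coefficients $a_i$ vanish for $i>j$ (since then $|a_i|<1$) and $p_j$ is a nonzero scalar in $\F_q$; consequently the rounded vector $\sum_i p_i x^{(i)}$ is a nonzero element of $\F_q[T]^d$ that is $\F_q[T]$-independent of $x^{(1)},\dots,x^{(j-1)}$, and the ultrametric inequality shows it has $N_\cB$-value strictly less than $\sigma_j$, giving the required contradiction. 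One stylistic remark: once orthogonality and the residue-field argument deliver $|\det(v_1,\dots,v_d)|=\prod_i\sigma_i$, the Hadamard-inequality paragraph becomes superfluous, since you already obtain the equality rather than just the lower bound.
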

\begin{rem}
Although Mahler initially defines volume differently, he shows that this this notion of using determinants is equivalent \cite[equation (21)]{M1941}. 
\end{rem}

It is a simple corollary that this idea holds more generally (for any lattice $\cL$ not just for the integral lattices $\F_q[T]^{d}$). In~\cite{A1967}, the author states Mahler's results in this more general form, but we have not been able to find any details in the literature, so we provide them for completeness.

\begin{cor}\label{cor:Mahler_general}
Let $\cL$ be a lattice and $\cB$ a convex body. If $\sigma_1,...,\sigma_d$ are the successive minima of $\cL$ with respect to $\cB$ then $x^{(1)},...,x^{(d)}$ form an $\F_q[T]$-basis for $\cL$ and 
$$\sigma_1...\sigma_d = \frac{\det \cL}{\vol \cB}. $$
\end{cor}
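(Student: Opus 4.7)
The plan is to deduce the general case directly from Mahler's result (Lemma~\ref{lem:Mahler}) by transporting the whole setup along the $\F_q[T]$-module isomorphism $A_\cL : \F_q[T]^d \to \cL$. Write $\cL = A_\cL \F_q[T]^d$ and $\cB = U_\cB B_1^d$, and introduce the auxiliary set
$$\cB' = A_\cL^{-1}\cB = (A_\cL^{-1} U_\cB) B_1^d.$$
Since $A_\cL^{-1} U_\cB \in \GL_d(\Ki)$, the set $\cB'$ is again a convex body in the sense used in this paper, and
$$\vol \cB' = |\det(A_\cL^{-1} U_\cB)| = \frac{\vol \cB}{\det \cL}.$$

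Next I would observe that the $\F_q[T]$-linear bijection $\phi: \Ki^d \to \Ki^d$, $\phi(x) = A_\cL^{-1} x$, sends $\cL$ bijectively to $\F_q[T]^d$ and identifies the two norm functions: setting $U_{\cB'} = A_\cL^{-1} U_\cB$, one has
$$N_{\cB'}(\phi(x)) = \|U_{\cB'}^{-1} A_\cL^{-1} x\| = \|U_\cB^{-1} x\| = N_\cB(x).$$
Since $\phi$ is $\F_q[T]$-linear, it also preserves $\F_q[T]$-linear independence. Therefore the successive minima $\sigma_1,\dots,\sigma_d$ of $\cL$ with respect to $\cB$ coincide with those of $\F_q[T]^d$ with respect to $\cB'$, and a sequence $x^{(1)},\dots,x^{(d)} \in \cL$ achieving them corresponds under $\phi$ to a sequence $y^{(i)} = \phi(x^{(i)}) \in \F_q[T]^d$ achieving the minima for the pair $(\F_q[T]^d, \cB')$.

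Finally, applying Lemma~\ref{lem:Mahler} to $\F_q[T]^d$ and $\cB'$ gives that $y^{(1)},\dots,y^{(d)}$ form an $\F_q[T]$-basis of $\F_q[T]^d$ and that $\sigma_1\cdots\sigma_d = 1/\vol \cB'$. Pulling back by $\phi^{-1} = A_\cL$, which is an $\F_q[T]$-module isomorphism $\F_q[T]^d \to \cL$, it follows that $x^{(i)} = A_\cL y^{(i)}$ form an $\F_q[T]$-basis of $\cL$. Substituting the expression for $\vol \cB'$ computed above yields $\sigma_1\cdots\sigma_d = \det \cL / \vol \cB$, as required.

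There is no serious obstacle here; the argument is essentially a change-of-variables formalism. The only point that merits care is verifying that the definition of convex body used in this paper is stable under multiplication by an element of $\GL_d(\Ki)$ (so that $\cB'$ really is a convex body), and that the determinantal definition of $\vol$ behaves multiplicatively under this transformation so that the volume ratio cleanly gives $\det \cL / \vol \cB$.
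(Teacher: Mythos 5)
Your proof is correct and takes essentially the same route as the paper: both arguments set $\cB' = A_\cL^{-1}\cB$, deduce that the successive minima of $\cL$ w.r.t.\ $\cB$ agree with those of $\F_q[T]^d$ w.r.t.\ $\cB'$ (the paper by unwinding the definitions directly, you by the slightly slicker observation that $A_\cL^{-1}$ is an $\F_q[T]$-module isomorphism matching the norm functions), and then invoke Lemma~\ref{lem:Mahler} together with $\vol \cB' = \vol\cB/\det\cL$.
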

\begin{proof}
As before we write $\cB = U_\cB B_1^d$ and $\cL = A_\cL \F_q[T]^d$. Note that if $\sigma'_1,...,\sigma'_d$ are the successive minima of $A_{\cL}^{-1}\cB$ with respect to $\F_q[T]^d$ (with corresponding vectors $x'^{(1)},...,x'^{(d)}$) then Lemma \ref{lem:Mahler} implies
$$\sigma'_1...\sigma'_d = \frac{1}{|\det A_\cL^{-1}U_\cB|} = \frac{\det \cL}{\vol \cB}. $$
It thus suffices to show that for each $i$,
$$\sigma_i = \sigma_i'$$
and that $x'^{(i)}$ can each be chosen such that 
$$ x'^{(i)} = A^{-1}_\cL x^{(i)}. $$
This second part ensures that $x^{(1)},...,x^{(d)}$ form a basis for $\cL$. 

Firstly, by definition we have 
\begin{align*}
    \sigma'_1
    &= \min_{\substack{x \neq 0 \\ x \in \F_q[T]^d}}\|U_\cB^{-1}A_\cL x\|\\
    &= \min_{\substack{x \neq 0 \\ A_\cL^{-1}x \in \F_q[T]^d}}\|U_\cB^{-1}x\|\\
    &= \min_{\substack{x \neq 0 \\ x \in A_\cL \F_{q}[T]^d}}\|U_\cB^{-1}x\|
    = \sigma_1.
\end{align*}
Also by construction 
$\|U_\cB^{-1}x^{(1)}\| = \sigma_1 $
which now implies 
$$\|U_\cB^{-1}A_\cL (A_\cL^{-1}x^{(1)})\| = \sigma'_1 $$ 
so we can choose $x'^{(1)} = A^{-1}_\cL x^{(1)}$. 

We now obtain for $i > 1$ that
\begin{align*}
    &\sigma_i\\
    &= \min\{\|U_\cB^{-1}x\|:x \in A_\cL\F_q[T]^d \\
    &\quad\quad\quad\quad  \text{ and } x \text{ is } \F_q[T] \text{ independent of } A_\cL x'^{(1)},...,A_\cL x'^{(i-1)}\} \\
  &= \min\{\|U_\cB^{-1}A_\cL x\|:x \in \F_q[T]^d \\
    &\quad\quad\quad\quad  \text{ and } x \text{ is } \F_q[T] \text{ independent of }  x'^{(1)},..., x'^{(i-1)}\}\\
    &= \sigma_i'
\end{align*}
which shows that $\sigma_i = \sigma_i'$. Again this now implies 
$$\|U_\cB^{-1}A_\cL (A_\cL^{-1}x^{(i)})\| = \sigma'_i $$ 
so we can choose $x'^{(i)} = A^{-1}_\cL x^{(i)}$.
\end{proof}

We will now introduce some background for \textit{dual lattices}. We firstly define a bilinear form (the dot product) $\langle ~,~\rangle $ on $\Ki^d$ by 
$$\langle(x_1,...,x_d),(y_1,..., y_d)\rangle = \sum_{i=1}^dx_iy_i. $$
Given a lattice $\cL$ we define its dual lattice 
$$\cL^* = \{x \in \Ki^d : \langle x, y \rangle \in \F_q[T] \text{ for all } y \in \cL\}. $$
Note that if $\cL = A_\cL\F_q[T]^d$ then $\cL^* = A^{-T}_\cL\F_q[T]^d$, since  $x \in \cL^*$ if and only if $A_\cL^Tx \in \F_q[T]^d$. So $\cL^*$ is in fact a lattice. 
%simple proof, go through each direction of the iff
Given a convex body $B$ we define its dual body 
$$\cB^* = \{x \in \Ki^d : |\langle x, y \rangle| \leq 1 \text{ for all } y \in \cB\}. $$
Note that again, if $\cB = U_\cB B_1^d$ then $\cB^* = U_\cB^{-T}B_1^d$ so the dual body is in fact a convex body. 

In the proof of Corollary \ref{cor:Mahler_general} we showed that the successive minima of $\cL$ with respect to $\cB$ are the same as the successive minima of $\F_q[T]^d$ with respect to $A_{\cL}^{-1}\cB$. Thus the successive minima of $\cL^* = A^{-T}_\cL\F_q[T]^d$ with respect to $\cB^*$ are equal to the successive minima of $\F_{q}[T]^d$ with respect to $A_{\cL}^TB^* = (A_\cL^{-1}\cB)^*$. Although the following result due to Mahler is only stated in \cite[equation (28)]{M1941} in the case $\cL = \F_q[T]^d$, the above discussion shows it holds in the form presented below. Perhaps the only point in \cite{M1941} that may require clarification with regards to this result is the following: given a convex body $\cB = U_\cB B_1^d$, Mahler defines \cite[page 503]{M1941} the dual to be $\cB^* = U_{\cB}^*B_1^d$ where he calls $U_{\cB}^*$ the ``complementary matrix" to $U_\cB$. His defining property of the ``complementary matrix'' on page 502 is that of the inverse transpose, and is thus consistent with our definition. 
\begin{lem}\label{lem:Mahler_dual}
Let $\cL$ be a lattice and $\cB$ a convex body. If $\sigma_1,...,\sigma_d$ are the successive minima of $\F_q[T]^d$ with respect to $\cB$ and $\sigma_1^*,...,\sigma_d^*$ are the successive minima of $\cL^*$ with respect to $\cB^*$ then 
$$\sigma_i\sigma^*_{d-i+1} = 1, ~1 \leq i \leq d. $$
\end{lem}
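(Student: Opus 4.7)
The plan is to follow Mahler's original proof, exploiting the fact that the reduction from a general lattice $\cL$ to the case $\cL = \F_q[T]^d$ has essentially been carried out in the discussion preceding the lemma. With $\cL = A_\cL \F_q[T]^d$ and $\cB = U_\cB B_1^d$, the proof of Corollary~\ref{cor:Mahler_general} shows that the successive minima of $\cL$ with respect to $\cB$ coincide with those of $\F_q[T]^d$ with respect to $A_\cL^{-1}\cB$. Applied to the dual pair, and using the direct verification that $A_\cL^T \cB^* = (A_\cL^{-1}\cB)^*$, the same argument shows that the successive minima of $\cL^* = A_\cL^{-T}\F_q[T]^d$ with respect to $\cB^* = U_\cB^{-T}B_1^d$ agree with the successive minima of $\F_q[T]^d$ with respect to $(A_\cL^{-1}\cB)^*$. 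Since the claimed identity $\sigma_i \sigma_{d-i+1}^* = 1$ is preserved under this pair of replacements, we may assume $\cL = \F_q[T]^d$.

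In this reduced setting the identity is exactly Mahler's equation~(28) in~\cite{M1941}. The only verification required is the compatibility of our notion of dual body $\cB^* = U_\cB^{-T}B_1^d$ with Mahler's definition, which is provided by identifying his ``complementary matrix'' with the inverse transpose, as already noted in the paragraph preceding the lemma.

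If a self-contained argument were desired, one would pick a basis $v_1,\dots,v_d$ of $\F_q[T]^d$ attaining the successive minima (furnished by Lemma~\ref{lem:Mahler}) and pass to the dual basis $v_1^*,\dots,v_d^*$ defined by $\langle v_i, v_j^* \rangle = \delta_{ij}$; the change-of-basis matrix lies in $\GL_d(\F_q[T])$, so the $v_j^*$ again form a basis of $\F_q[T]^d$. The central computation is to establish $N_{\cB^*}(v_{d-i+1}^*) = 1/\sigma_i$, after which a dimension-counting argument identifies these values as the successive minima of the dual. The main obstacle in such a direct approach would be verifying the ``ultrametric orthogonality'' of the basis attaining the successive minima, namely $N_\cB(\sum_j a_j v_j) = \max_j |a_j|\sigma_j$ for all $a_j \in \Ki$; this strong orthogonality, which fails over $\R$ and is responsible for the equality (as opposed to a two-sided inequality up to a dimensional constant), is the feature that makes the proof work cleanly in the non-archimedean setting and is already implicit in Lemma~\ref{lem:Mahler}.
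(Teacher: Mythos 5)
Your proof follows the paper's own treatment exactly: reduce to $\cL = \F_q[T]^d$ via the transfer observed in the proof of Corollary~\ref{cor:Mahler_general} together with the identity $A_\cL^T\cB^* = (A_\cL^{-1}\cB)^*$, then invoke Mahler's equation (28) after confirming that his ``complementary matrix'' is the inverse transpose. The sketch of a self-contained dual-basis argument in your final paragraph is a reasonable supplement, but the paper itself stops at the citation to Mahler.
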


\subsection{Modular lattices }
We will most often apply the above results to modular lattices; that is, lattices defined by certain equations in $\F_q[T]/\langle F \rangle$. Our next step is to establish their basic properties. 
\begin{lem}\label{lem:det_of_modular_lattice}
Let $a_1,...,a_d \in \F_q[T]$ with $a_1$ coprime to $F \in \F_{q}[T]$, and let 
$$\cL = \{(x_1,...,x_d) \in \F_q[T]^d~:~ a_1x_1 + ... + a_dx_d \equiv 0 \Mod{F}\}. $$
Then $\cL$ is a lattice, and $\det \cL = |F|$. 
\end{lem}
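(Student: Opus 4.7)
The plan is to exhibit an explicit generator matrix $A_\cL \in \GL_d(\Ki)$ such that $\cL = A_\cL \F_q[T]^d$, and read off the determinant from its triangular form.

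Since $\gcd(a_1, F) = 1$, we may choose $b_i \in \F_q[T]$ with $a_1 b_i \equiv -a_i \Mod{F}$ for $i = 2, \ldots, d$. Consider the matrix
\begin{equation*}
A_\cL = \begin{pmatrix} F & b_2 & b_3 & \cdots & b_d \\ 0 & 1 & 0 & \cdots & 0 \\ 0 & 0 & 1 & \cdots & 0 \\ \vdots & & & \ddots & \vdots \\ 0 & 0 & 0 & \cdots & 1 \end{pmatrix},
\end{equation*}
whose columns $(F,0,\ldots,0)^T$ and $(b_i, 0, \ldots, 1, \ldots, 0)^T$ (with the $1$ in the $i$-th slot) each manifestly lie in $\cL$, since $a_1 \cdot F \equiv 0$ and $a_1 b_i + a_i \equiv 0 \Mod F$. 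The matrix is upper triangular with $\det A_\cL = F \neq 0$, so $A_\cL \in \GL_d(\Ki)$ and $|\det A_\cL| = |F|$.

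It remains to show that the columns of $A_\cL$ generate $\cL$ as an $\F_q[T]$-module, i.e.\ that every $(x_1, \ldots, x_d) \in \cL$ lies in $A_\cL \F_q[T]^d$. Given such a tuple, $a_1 x_1 + \cdots + a_d x_d \equiv 0 \Mod F$ together with the coprimality of $a_1$ and $F$ yields
\begin{equation*}
x_1 \equiv b_2 x_2 + \cdots + b_d x_d \Mod{F},
\end{equation*}
so there exists $z \in \F_q[T]$ with $x_1 = Fz + b_2 x_2 + \cdots + b_d x_d$. Then
\begin{equation*}
(x_1, x_2, \ldots, x_d)^T = A_\cL\, (z, x_2, \ldots, x_d)^T,
\end{equation*}
which exhibits $(x_1,\ldots,x_d)$ as an $\F_q[T]$-combination of the columns of $A_\cL$. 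The reverse containment having already been checked, $\cL = A_\cL \F_q[T]^d$, confirming that $\cL$ is a lattice in the sense of Section~\ref{sec:lattices}, and $\det \cL = |\det A_\cL| = |F|$.

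There is no real obstacle here: the only structural input is that $a_1$ is a unit modulo $F$, which allows the triangular reduction above. The argument is the direct function-field analogue of writing down the Hermite-normal-form basis of a modular sublattice of $\Z^d$.
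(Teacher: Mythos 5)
Your proof is correct and takes essentially the same approach as the paper: you exhibit the same upper-triangular generator matrix (with $b_i$ playing the role of the paper's $-\ov{a}_1 a_i$), verify both containments, and read off the determinant. The only cosmetic difference is that you allow $b_i$ to be any representative of $-\ov{a}_1 a_i$ modulo $F$, which changes nothing.
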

\begin{proof}
It suffices to show 
\begin{align}\label{eq:matrix_of_modular_lattice}
    \cL = \begin{pmatrix}F & -\ov{a}_1a_2 & -\ov{a}_1a_3 & ...& -\ov{a}_1a_{d-1} & -\ov{a}_1a_d \\ 0 & 1 & 0 & ... & 0 & 0 \\ 0 & 0 & 1 & ... & 0 & 0 \\ \vdots & \vdots & \vdots & \ddots & \vdots & \vdots \\ 0 & 0 & 0 & ... &1 & 0 \\ 0 & 0 & 0 & ... & 0 & 1\end{pmatrix}\F_q[T]^d
\end{align}
where $\ov{a}_1 \in \F_q[T]$ is chosen such that $a_1\ov{a}_1 \equiv 1 \Mod{F}$. If $(x_1,...,x_d) \in \cL$ then  of course $x_1 = -\ov{a}_1a_2x_2 - \ov{a}_1a_3x_3 - ... - \ov{a}_1a_dx_d + kF$ for some $k \in \F_q[T]$. So we can write 
\begin{align*}
    (x_1,...,x_d)
    &= k(F,0,...,0) + x_2(-\ov{a}_1a_2, 1, ..., 0) + ... + x_d(-\ov{a}_1a_d, 0, ..., 1) 
\end{align*}
which is an element of the right hand side of (\ref{eq:matrix_of_modular_lattice}). 

Conversely if $(x_1, ..., x_d)$ is an element of the right hand side of (\ref{eq:matrix_of_modular_lattice}) then there exists $f_1,...,f_d \in \F_q[T]$ such that 
\begin{align*}
    x_1 &= f_1F - f_2\ov{a}_1a_2 -...-f_d\ov{a}_1a_d,\\
    x_2 &= f_2,\\
    &\vdots \\
    x_d &= f_d.
\end{align*}
This implies
\begin{align*}
    a_1x_1 + ... &+a_dx_d\\
    &= a_1f_1F - a_1f_2\ov{a}_1a_2 -...-a_1f_d\ov{a}_1a_d + a_2f_2 + ... + a_df_d
\end{align*}
and we see that since $a_1\ov{a}_1 \equiv 1 \Mod{F}$ we have 
$$a_1x_1 + ... + a_dx_d \equiv 0 \Mod{F}.$$
So $(x_1,...,x_d) \in \cL$. 
\end{proof}

We next note the following, analogous to \cite[Lemma 3.4]{BK2020}.
\begin{lem}\label{lem:dual_modular_eq}
Let $a_1,...,a_d \in \F_q[T]$ be coprime to $F$, and let 
$$\cL = \{(x_1,...,x_d) \in \F_q[T]^d~:~a_1x_1+...+a_dx_d \equiv 0 \Mod{F}\}. $$
Then 
\begin{align*}
    \cL^* = \bigg{\{}\bigg{(}&\frac{y_1}{F},...,\frac{y_d}{F}  \bigg{)} \in \F_q(T)^d ~:~ y_i \in \F_q[T]^d \text{ and } \\
     &\text{there exists some } w \in \F_q[T]^d \text{ such that } a_iw \equiv y_i \Mod{F}        \bigg{\}}.
\end{align*}
\end{lem}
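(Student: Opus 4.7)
The plan is to verify both inclusions directly from the definition
$$\cL^* = \{x \in \Ki^d : \langle x, y\rangle \in \F_q[T] \text{ for all } y \in \cL\}.$$
Call the right-hand side of the claimed identity $\cM$. The inclusion $\cM \subseteq \cL^*$ is the straightforward direction, while $\cL^* \subseteq \cM$ requires producing a witness $w$, which is where the argument is slightly less trivial.

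For $\cM \subseteq \cL^*$, take any $x = (y_1/F, \ldots, y_d/F) \in \cM$, with $y_i \in \F_q[T]$ and some $w \in \F_q[T]$ satisfying $a_i w \equiv y_i \Mod{F}$ for all $i$. For any $z = (z_1, \ldots, z_d) \in \cL$, I compute
$$\langle x, z\rangle = \frac{y_1 z_1 + \cdots + y_d z_d}{F}.$$
Substituting $y_i \equiv a_i w \Mod{F}$, the numerator is congruent to $w(a_1 z_1 + \cdots + a_d z_d) \equiv 0 \Mod{F}$, so $\langle x, z\rangle \in \F_q[T]$, as required.

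For $\cL^* \subseteq \cM$, take $x = (x_1, \ldots, x_d) \in \cL^*$. Testing against the lattice vectors $F e_i \in \cL$ (where $e_i$ is the $i$-th standard basis vector) gives $x_i F \in \F_q[T]$, so $x_i = y_i/F$ for some $y_i \in \F_q[T]$. To produce $w$, let $\ov{a}_1$ be the inverse of $a_1$ modulo $F$ and set $w = \ov{a}_1 y_1 \in \F_q[T]$; then $a_1 w \equiv y_1 \Mod{F}$ automatically. For each $i \geq 2$, consider the vector $z^{(i)} = -\ov{a}_1 a_i \, e_1 + e_i$; since $a_1(-\ov{a}_1 a_i) + a_i \equiv 0 \Mod{F}$, this lies in $\cL$ (and is in fact one of the basis columns appearing in the proof of Lemma \ref{lem:det_of_modular_lattice}). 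The constraint $\langle x, z^{(i)}\rangle \in \F_q[T]$ then reads $(-\ov{a}_1 a_i y_1 + y_i)/F \in \F_q[T]$, i.e., $y_i \equiv \ov{a}_1 a_i y_1 = a_i w \Mod{F}$. Hence the same $w$ works for every $i$, placing $x$ in $\cM$. The only potential obstacle is finding a single witness $w$ valid across all indices, but this is forced: the $i=1$ condition determines $w \Mod{F}$, and the lattice vectors $z^{(i)}$ then verify the remaining congruences.
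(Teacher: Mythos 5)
Your proof is correct and takes essentially the same approach as the paper: you verify both inclusions directly, using the same test vectors $F e_i$ to pin down the form $y_i/F$, and (up to a harmless unit multiple) the same lattice vectors $a_i e_1 - a_1 e_i$ to force the single witness $w = \ov{a}_1 y_1$ to work for every index.
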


\begin{proof}
We let 
\begin{align*}
    \Gamma = \bigg{\{}\bigg{(}&\frac{y_1}{F},...,\frac{y_d}{F}  \bigg{)} \in \F_q(T)^d ~:~ y_i \in \F_q[T]^d \text{ and } \\
     &\text{there exists } w \in \F_q[T]^d \text{ such that } a_iw \equiv y_i \Mod{F}        \bigg{\}} 
\end{align*}
and of course aim to show $\Gamma = \cL^*$.

Firstly, let 
$$ y = \bigg{(}\frac{y_1}{F},...,\frac{y_d}{F}  \bigg{)} \in \Gamma. $$
Then for any $x = (x_1,...,x_d) \in \cL$, by the definitions of $\Gamma$ and $\cL$ there exists $w, k_i \in \F_q[T]$ such that
$$\langle x,y \rangle = \sum_{i=1}^d\frac{x_iy_i}{F} = \sum_{i=1}^d\frac{Fk_ix_i + a_iwx_i}{F} =  \sum_{i=1}^dk_ix_i + \frac{w}{F}\sum_{i=1}^d{a_ix_i} \in \F_{q}[T].$$
Thus $\Gamma \subseteq \cL^*$.

Next, note that if $e_i$ denotes the $i$-th standard basis vector for $\Ki^d$ then $Fe_i \in \cL$. Thus for any $x = (x_1,...,x_d) \in \cL^*$ we must have 
$$\langle x, Fe_i \rangle = x_iF \in \F_q[T] $$
which means we can write $x_i = \frac{g_i}{F}$ for some $g_i \in \F_q[T]$. So we can write any $x \in \cL^*$ as 
$$x = \bigg{(}\frac{g_1}{F},...,\frac{g_d}{F}\bigg{)} $$
for some $g_i \in \F_q[T]$. 
If we now let $y_i$ be the vector with $a_i$ in the first coordinate and $-a_1$ in the $i$-th coordinate (and zeroes elsewhere) then $y_i \in \cL$ so taking $x$ as above this forces 
$$\langle x,y_i \rangle  = \frac{a_ig_1 - a_1g_i}{F} \in \F_q[T]$$
which means 
$$wa_i \equiv g_i \Mod{F} $$
where $w \equiv g_1a_1^{-1} \Mod{F}$, which of course does not depend on $i$. 

\end{proof}

\section{Counting and local-to-global techniques}\label{sec:Counting_and_localtoglobal}

In this section, we will build upon Mahler's ideas as presented in Section \ref{sec:lattices} to develop a series of new results that are useful for bounding intersections of lattices and convex bodies. These ideas will be crucial for proving all of our main results.

We first show that the intersection of a convex body and a lattice can be written precisely in terms of successive minima. We have taken inspiration from \cite{H2002}, but with some additional work we can be more exact in this setting. But before doing so, we need the following lemma. 

\begin{lem}
Suppose that
$$\cL = \text{span}_{\F_q[T]}\{v_1,...,v_d\}$$ for some basis $v_i \in \Ki$ and  let 
$$\Tilde{\cL} = \text{span}_{\F_q[T]}\{x_1v_1,...,x_dv_d\}$$
for some $x_i \in \F_q[T]$. Then 
$$|\cB \cap \cL| \leq |\cB \cap \Tilde{\cL}|\prod_{i=1}^d|x_i|. $$
\end{lem}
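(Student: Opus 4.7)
The plan is to exploit the fact, emphasised in the paper, that a convex body $\cB \subseteq \Ki^d$ is an additive subgroup (in fact a $B_1$-module), together with the obvious observation that $\tilde{\cL}$ is a sublattice of $\cL$ of finite index equal to $\prod_{i=1}^d |x_i|$. The argument is then a one-line coset counting.

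First I would record the index computation. Since the $v_i$ form an $\F_q[T]$-basis of $\cL$, the quotient $\cL/\tilde{\cL}$ is an $\F_q[T]$-module isomorphic to
\begin{equation*}
\bigoplus_{i=1}^d \F_q[T]/\langle x_i \rangle,
\end{equation*}
which, as a finite set, has cardinality $\prod_{i=1}^d q^{\deg x_i} = \prod_{i=1}^d |x_i|$ (treating the factor $\F_q[T]/\langle 0\rangle$ as absent or infinite if some $x_i=0$, in which case the inequality is trivial). Hence $[\cL:\tilde\cL] = \prod_{i=1}^d |x_i|$.

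Next I would partition $\cB\cap \cL$ according to cosets of $\tilde{\cL}$ in $\cL$:
\begin{equation*}
|\cB \cap \cL| \;=\; \sum_{c+\tilde\cL \in \cL/\tilde\cL} |(c+\tilde\cL) \cap \cB|.
\end{equation*}
The key step is to bound each summand by $|\cB \cap \tilde\cL|$. If $(c+\tilde\cL)\cap \cB$ is non-empty, fix any $y_0 \in (c+\tilde\cL)\cap \cB$. For any other $y$ in this intersection, $y - y_0 \in \tilde\cL$; but since $\cB$ is a convex body in $\Ki^d$ it is closed under subtraction, so $y-y_0 \in \cB$ as well. Therefore $y-y_0 \in \cB\cap \tilde\cL$, and the translation $y \mapsto y-y_0$ embeds $(c+\tilde\cL)\cap \cB$ into $\cB\cap \tilde\cL$, giving $|(c+\tilde\cL)\cap \cB| \le |\cB \cap \tilde\cL|$.

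Combining this with the index computation,
\begin{equation*}
|\cB \cap \cL| \;\le\; [\cL:\tilde\cL]\cdot |\cB \cap \tilde\cL| \;=\; |\cB \cap \tilde\cL|\,\prod_{i=1}^d |x_i|,
\end{equation*}
as required. The only subtlety, which is really the whole content of the lemma, is the use of additive-subgroup structure of $\cB$ — this is cleaner than the real case (where one typically loses constants through Minkowski-type arguments) and reflects the running theme of the paper that $\Ki^d$ convex bodies are genuine subgroups.
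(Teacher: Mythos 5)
Your proof is correct and takes essentially the same route as the paper: both bound the number of points of $\cB \cap \cL$ in each coset of $\tilde{\cL}$ by $|\cB\cap\tilde\cL|$ (you via an explicit translation injection, the paper via a pigeonhole contradiction, which are the same idea), then multiply by the index $[\cL:\tilde\cL]=\prod_i|x_i|$. One small caveat: your parenthetical that the case $x_i=0$ is ``trivial'' is off, since with the paper's convention $|0|=0$ the right-hand side would vanish while $0\in\cB\cap\cL$ always — both you and the paper are implicitly assuming each $x_i\neq 0$, which is the only case used.
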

\begin{proof}
Let $m = |\cB \cap \Tilde{\cL}|$, and suppose there exist at least $m+1$ different points $a_1,...,a_{m+1} \in \cB\cap \cL$ such that $a_i \equiv a_1 \Mod{\Tilde{\cL}}$ for each $1 \leq i \leq m+1$. Then we have 
$$a_1 - a_i \in \cB\cap \Tilde{\cL} $$
which contradicts that $|\cB \cap \Tilde{\cL}| = m$. Thus each residue class of $\cL$ modulo $\Tilde{\cL}$ contains at most $m$ points in $\cL \cap \cB$. The result is proved, after noting that the number of residue classes modulo $\Tilde{\cL}$ in $\cL$ is exactly $\prod|x_i|$.
\end{proof}

This now leads to the following result. We note that this is essentially given in \cite[Lemma B.5]{Lee2011} in the case that $\cB = B_m^d$, and one could potentially use \cite{Lee2011} to simplify the following proof by using the ideas in the proof of Corollary \ref{cor:Mahler_general}. 

\begin{lem}\label{lem:lattice_body_intersection}
If $\sigma_1,...,\sigma_d$ are the successive minima of $\cL$ with respect to $B$ then 
$$|\cB \cap \cL| = \prod_{i=1}^d \bigg{\lceil} \frac{q}{\sigma_i} \bigg{\rceil}. $$
\end{lem}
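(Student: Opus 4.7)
The plan is to induct on the dimension $d$, using the successive minima basis $v_1, \ldots, v_d$ of $\cL$ with respect to $\cB$, which is an $\F_q[T]$-basis of $\cL$ by Corollary~\ref{cor:Mahler_general}. The base case $d=1$ is a direct computation: $\cL = \F_q[T]v_1$ with $N_\cB(v_1) = \sigma_1$, so $av_1 \in \cB$ iff $|a|\sigma_1 \leq 1$, and writing $\sigma_1 = q^{s_1}$ a short case analysis on $s_1$ shows that the number of such $a \in \F_q[T]$ equals $\lceil q/\sigma_1 \rceil$.

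For the inductive step, set $\cL_1 = \text{span}_{\F_q[T]}\{v_1,\ldots,v_{d-1}\}$, so every $z \in \cL$ decomposes uniquely as $z = z_1 + a_d v_d$ with $z_1 \in \cL_1$ and $a_d \in \F_q[T]$. I plan to count $z \in \cL \cap \cB$ by fixing $a_d$ and counting admissible $z_1$. If $|a_d|\sigma_d \leq 1$, then $a_d v_d \in \cB$, and since $\cB$ is an additive subgroup of $\Ki^d$ the translate $\cB - a_d v_d$ equals $\cB$, so the admissible $z_1$ are exactly the elements of $\cL_1 \cap \cB$. Since $v_1,\ldots,v_{d-1}$ also realize the successive minima of $\cL_1 \subseteq \cL$ with respect to $\cB$ (a direct check from the definition), the inductive hypothesis gives $|\cL_1 \cap \cB| = \prod_{i<d}\lceil q/\sigma_i \rceil$, and the base-case count shows the number of admissible $a_d$ is $\lceil q/\sigma_d \rceil$, yielding the formula---provided I can rule out admissible $z_1$ when $|a_d|\sigma_d > 1$.

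That remaining step is the main obstacle, and I would handle it via a non-archimedean rounding argument. Suppose, for contradiction, that $z = z_1 + a_d v_d \in \cB$ with $|a_d|\sigma_d > 1$. Expand $z_1/a_d = \sum_{i<d} u_i v_i$ with $u_i \in \Ki$, let $c_i \in \F_q[T]$ denote the polynomial part of $u_i$ (so $|u_i - c_i| \leq 1/q$), and define $\tilde z = \sum_{i<d} c_i v_i + v_d \in \cL$. From $\tilde z = z/a_d + \sum_{i<d}(c_i - u_i) v_i$ and the ultrametric property of $N_\cB$,
\[
N_\cB(\tilde z) \;\leq\; \max\!\left(\tfrac{1}{|a_d|},\; \tfrac{\sigma_{d-1}}{q}\right) \;<\; \sigma_d,
\]
where the strict inequality uses $|a_d|\sigma_d > 1$ together with $\sigma_{d-1} \leq \sigma_d$. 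But the $v_d$-coefficient of $\tilde z$ is $1$, so $\tilde z$ is nonzero and $\F_q[T]$-independent of $v_1,\ldots,v_{d-1}$, contradicting the defining minimality of $\sigma_d$. The crux is this rounding step: the non-archimedean structure of $\Ki$ guarantees $\F_q[T]$-approximations accurate to $1/q$, which is exactly the slack needed to beat $\sigma_d$ and produce the contradiction.
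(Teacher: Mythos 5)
Your proof is correct in its essential mathematical content, but it takes a genuinely different route from the paper, and there is one technical point you need to address to make the induction airtight.

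\textbf{Comparison with the paper's approach.} The paper proves the two inequalities $|\cL \cap \cB| \geq \prod_i \lceil q/\sigma_i\rceil$ and $|\cL\cap\cB| \leq \prod_i \lceil q/\sigma_i\rceil$ separately. For the lower bound it counts the ``box'' $\{\sum_i a_i x^{(i)} : |a_i| \le 1/\sigma_i\}$, relying on $\cB$ being closed under addition. For the upper bound it passes to a rescaled sublattice $\tilde{\cL} = \mathrm{span}_{\F_q[T]}\{T^{m_i}x^{(i)}\}$ and shows $\cB\cap\tilde{\cL}=\{0\}$ by scaling a putative point down by $T^{-m_k}$ to contradict the minimality of $\sigma_k$; a preliminary lemma relating $|\cB\cap\cL|$ to $|\cB\cap\tilde{\cL}|$ via cosets then closes the count. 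Your proof instead inducts on dimension, splitting $z = z_1 + a_d v_d$ along the last basis vector, uses the fact that $\cB$ is an additive group to reduce the count to $|\cL_1 \cap \cB|\cdot |\{a_d : |a_d|\sigma_d\le 1\}|$, and eliminates the case $|a_d|\sigma_d>1$ by a non-archimedean rounding of $z_1/a_d$. The rounding step is correct: the polynomial-part approximation achieves $|c_i - u_i| \le q^{-1}$, the ultrametric inequality gives $N_\cB(\tilde z) \le \max(1/|a_d|, \sigma_{d-1}/q) < \sigma_d$, and $\tilde z$ has $v_d$-coefficient $1$, so it is independent of $v_1,\dots,v_{d-1}$, contradicting the definition of $\sigma_d$. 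This replaces the paper's scaling trick with a rounding trick; both exploit the same ultrametric slack, but your version proves the equality in one pass and avoids the auxiliary sublattice lemma, which is arguably cleaner.

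\textbf{The gap you need to fill.} In the inductive step you invoke the inductive hypothesis on $\cL_1 = \mathrm{span}_{\F_q[T]}\{v_1,\dots,v_{d-1}\}$, a rank-$(d-1)$ discrete $\F_q[T]$-submodule sitting inside $\Ki^{d}$, paired with the ambient convex body $\cB \subseteq \Ki^{d}$. But the inductive hypothesis, as a statement of the lemma in dimension $d-1$, concerns a \emph{full-rank} lattice in $\Ki^{d-1}$ and a convex body in $\Ki^{d-1}$. These are not formally the same object. The cleanest fix is to reformulate the lemma for $\F_q[T]$-submodules $\Lambda \subseteq \Ki^d$ of arbitrary rank $k\le d$, with $\Ki$-linearly independent generators, and induct on the rank $k$ rather than on the ambient dimension. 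Alternatively, one can identify $V := \mathrm{span}_{\Ki}\{v_1,\dots,v_{d-1}\}$ with $\Ki^{d-1}$ and check that $\cB \cap V$ is again a convex body in $V$; this amounts to constructing an orthogonal basis of $V$ for the norm $N_\cB|_V$, which is standard over a complete non-archimedean field but deserves a remark or citation. Your verification that $v_1,\dots,v_{d-1}$ realize the successive minima of $\cL_1$ is correct and anticipates part of this, but the dimensional bookkeeping still needs to be made explicit before the induction closes.
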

\begin{proof}
First we will prove that 
$$|\cB \cap \cL| \geq \prod_{i=1}^d \bigg{\lceil} \frac{q}{\sigma_i} \bigg{\rceil}.  $$

Let $t$ satisfy  $\sigma_1,...,\sigma_t \leq 1$ and $\sigma_{t+1} > 1$. We note that 
$$\prod_{i=1}^d \bigg{\lceil} \frac{q}{\sigma_i} \bigg{\rceil} = \prod_{i=1}^t\frac{q}{\sigma_i} $$
since any $x \in \cL \cap \cB$ can be written as 
$$x = a_1x^{(1)} + ... +a_tx^{(t)} $$
for some $a_i \in \F_q[T]$. Since $\cL \cap \cB$ is closed under addition, we could certainly undercount the size of $\cL \cap \cB$, by counting all $a_i$ such that 
$$a_ix^{(i)} \in \cB. $$
By construction, and (\ref{eq:normfunction_inf}), we have 
$$\sigma_i = \min_{\substack{\xi \in \Ki \\ \xi x^{(i)} \in \cB}}|\xi|^{-1}. $$
Let $\xi_i$ be the point at which this minimum is achieved. Of course, if $a_ix^{(i)} \in \cB$ then $|a_i| \leq |\xi_i|$. Conversely, suppose $|a_i| \leq |\xi_i|$. Then there exists some $b_i \in B_1$ such that $a_i = \xi_ib_i$. Now since $\cB$ is a $B_1$-module and $\xi_i x^{(i)} \in \cB$ this implies that $a_ix^{(i)} = \xi_ib_ix^{(i)} \in \cB$. 
Thus, we may count all $a_i$ such that $|a_i| \leq |\xi_i| = 1/\sigma_i$. Since we are only counting $a_i \in \F_q[T]$, there are $q/\sigma_i$ such $a_i$, as desired.

Finally we show that 
$$|\cB \cap \cL| \leq \prod_{i=1}^d \bigg{\lceil} \frac{q}{\sigma_i} \bigg{\rceil}.  $$
For each $1 \leq i \leq d$ let 
$$m_i = \log_q\big{\lceil}q/\sigma_i \big{\rceil} $$
and let 
$$\Tilde{\cL} = \text{span}_{\F_q[T]}\{T^{m_1}x^{(1)},...,T^{m_d}x^{(d)}\}. $$
This means, by the previous Lemma, that 
$$|\cB \cap \cL| \leq |\cB \cap \Tilde{\cL}|\prod_{i=1}^d|T^{m_i}| = |\cB \cap \Tilde{\cL}|\prod_{i=1}^d \bigg{\lceil} \frac{q}{\sigma_i} \bigg{\rceil}.$$
Now it suffices to show that $|\cB \cap \Tilde{\cL}| = 1$. For the sake of contradiction, suppose $a \in |\cB \cap \Tilde{\cL}| \setminus \{0\}$. This means we can write
$$a = a_1T^{m_1}x^{(1)} + ... + a_kT^{m_k}x^{(k)}.$$
for some $a_i \in \F_q[T]$ where $a_k \neq 0$. This also means 
$$T^{-m_k}a = a_1T^{m_1-m_k}x^{(1)} + ... + a_kx^{(k)} \in \cL, $$
since $m_1 \geq m_2 \geq ... \geq m_k$ from the definition of successive minima. 
We can also note that $T^{-m_k}a$ is $\F_q[T]$-independent of $x^{(1)},...,x^{(k-1)}$, since $a_k\neq 0$.  Hence
\begin{align*}
    \sigma_k
     &=\min\{N_\cB(x): x \in \cL, ~x \text{ independent of } x^{(1)}, ..., x^{(k-1)}\}\\
     &\leq N_\cB(T^{-m_k}a)\\
     &= \min_{\substack{\xi \neq 0 \\ \xi T^{-m_k}a \in \cB}}|\xi|^{-1}
     \leq |T^{m_k}|^{-1}
     \leq \sigma_k/q
\end{align*}
which is a contradiction. Thus $|\cB \cap \Tilde{\cL}| = 1$, as desired. 
\end{proof}

The following is now a simple consequence of Corollary \ref{cor:Mahler_general} and Lemma \ref{lem:lattice_body_intersection}.

\begin{lemma}
\label{lem:largest} 
In general, 
$$|\cL\cap \cB|\geq q^d\frac{\vol\cB}{\det\cL}.$$
and if $\sigma_d\le 1$ then this is an equality. 
\end{lemma}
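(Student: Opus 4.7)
The plan is to combine the exact point-count formula of Lemma \ref{lem:lattice_body_intersection} with the product formula for successive minima from Corollary \ref{cor:Mahler_general}. The inequality $|\cL\cap\cB|\geq q^d\vol\cB/\det\cL$ then drops out immediately from the trivial bound $\lceil x\rceil\geq x$, and the equality statement reduces to showing that each ceiling is already tight.

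First, I would write, using Lemma \ref{lem:lattice_body_intersection},
$$|\cL\cap\cB|=\prod_{i=1}^{d}\left\lceil\frac{q}{\sigma_i}\right\rceil\geq \prod_{i=1}^{d}\frac{q}{\sigma_i}=\frac{q^d}{\sigma_1\cdots\sigma_d}.$$
Then I would substitute Corollary \ref{cor:Mahler_general}, namely $\sigma_1\cdots\sigma_d=\det\cL/\vol\cB$, to obtain the desired lower bound $q^d\vol\cB/\det\cL$.

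For the equality statement when $\sigma_d\leq 1$, the key observation is that each $\sigma_i$ is the absolute value of some element of $\Ki$, and such absolute values are integer powers of $q$ (by the definition of $|\cdot|$ on $\F_q(T)$ and its extension to $\Ki$). Writing $\sigma_i=q^{k_i}$ for integers $k_i$, the ordering $\sigma_1\leq\cdots\leq\sigma_d\leq 1$ gives $k_i\leq 0$ for all $i$, so $q/\sigma_i=q^{1-k_i}$ is itself a positive integer. Hence $\lceil q/\sigma_i\rceil=q/\sigma_i$ for every $i$, and the inequality in the first display becomes an equality.

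I do not anticipate any real obstacle here: both of the two inputs (Lemma \ref{lem:lattice_body_intersection} and Corollary \ref{cor:Mahler_general}) are already available, and the only subtlety is the remark that absolute values on $\Ki$ take values in $q^{\mathbb Z}\cup\{0\}$, which is essentially the definition of $|\cdot|$ in Section \ref{sec:notation}. Thus the whole argument should fit in just a few lines.
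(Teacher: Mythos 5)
Your proof is correct and fills in exactly the argument the paper indicates but leaves implicit: the paper's entire justification for Lemma~\ref{lem:largest} is the remark that it is ``a simple consequence of Corollary \ref{cor:Mahler_general} and Lemma \ref{lem:lattice_body_intersection},'' and your proposal supplies precisely that chain of reasoning, including the correct observation that each $\sigma_i$ is a power of $q$, so $\lceil q/\sigma_i\rceil = q/\sigma_i$ whenever $\sigma_i\le 1$.
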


We also have the following with regards to dual lattices. 
\begin{lemma}
\label{lem:dualcount} 
Let $t$ be the largest integer such that 
$\sigma_{t}\le 1.$ Then 
$$|\cL^{*}\cap \cB^{*}|= \frac{\det\cL}{q^{2t-d}\vol\cB}|\cL\cap \cB|.$$
\end{lemma}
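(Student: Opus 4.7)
The plan is to compute both $|\cL\cap\cB|$ and $|\cL^*\cap\cB^*|$ explicitly in terms of the successive minima $\sigma_1 \le \dots \le \sigma_d$ of $\cL$ with respect to $\cB$, using Lemma~\ref{lem:lattice_body_intersection} twice, and then to use the duality relation $\sigma^*_j = 1/\sigma_{d-j+1}$ together with Corollary~\ref{cor:Mahler_general} to eliminate the product of successive minima in favor of $\det\cL/\vol\cB$. A useful preliminary observation is that each $\sigma_i$ lies in $q^{\Z}$, since $N_\cB$ takes values in $q^{\Z}\cup\{0\}$. This discreteness makes the ceilings in Lemma~\ref{lem:lattice_body_intersection} collapse to either $q/\sigma_i$ (when $\sigma_i \leq 1$) or $1$ (when $\sigma_i \geq q$).

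First I would combine this collapse with the definition of $t$ to rewrite
\[
|\cL\cap\cB| \;=\; \prod_{i=1}^d \bigl\lceil q/\sigma_i\bigr\rceil \;=\; \frac{q^t}{\sigma_1\cdots\sigma_t}.
\]
Next I would identify the index $t^*$, the largest $j$ with $\sigma^*_j \le 1$, for the dual configuration. Using Lemma~\ref{lem:Mahler_dual}, extended to arbitrary $\cL$ via the change of variables in the proof of Corollary~\ref{cor:Mahler_general}, we get $\sigma^*_j = 1/\sigma_{d-j+1}$; hence $\sigma^*_j \le 1$ iff $\sigma_{d-j+1} \ge 1$. Under the generic hypothesis that no $\sigma_i$ equals $1$ (the boundary case being the main technical subtlety), this condition translates to $d-j+1 > t$, giving $t^* = d-t$. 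Applying Lemma~\ref{lem:lattice_body_intersection} to $\cL^*\cap\cB^*$ then yields
\[
|\cL^*\cap\cB^*| \;=\; \frac{q^{t^*}}{\sigma^*_1\cdots\sigma^*_{t^*}} \;=\; q^{d-t}\prod_{i=t+1}^d \sigma_i.
\]

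To finish, I would use Corollary~\ref{cor:Mahler_general} in the form $\sigma_1\cdots\sigma_d = \det\cL/\vol\cB$ to write $\prod_{i=t+1}^d \sigma_i = (\det\cL/\vol\cB)/(\sigma_1\cdots\sigma_t)$, and then substitute $\sigma_1\cdots\sigma_t = q^t/|\cL\cap\cB|$ from the first step. This gives
\[
|\cL^*\cap\cB^*| \;=\; q^{d-t}\cdot\frac{\det\cL}{\vol\cB}\cdot\frac{|\cL\cap\cB|}{q^t} \;=\; \frac{\det\cL}{q^{2t-d}\vol\cB}\,|\cL\cap\cB|,
\]
as claimed.

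The main obstacle is the clean identification $t^* = d-t$: this is automatic when none of the $\sigma_i$ equal $1$, but requires additional bookkeeping at the boundary $\sigma_i = 1$, where the inequalities $\sigma_i \le 1$ and $\sigma^*_{d-i+1}\le 1$ can both hold. In such degenerate cases the same ceiling-collapse and duality arguments apply, but one must carefully account for how the ranges of indices with $\sigma_i \le 1$ and $\sigma^*_j \le 1$ overlap so that the final exponent of $q$ comes out correctly; the computation otherwise proceeds identically.
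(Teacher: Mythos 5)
Your write-up takes the same route as the paper's proof: compute $|\cL\cap\cB|$ and $|\cL^*\cap\cB^*|$ via Lemma~\ref{lem:lattice_body_intersection}, and relate them using the duality $\sigma^*_j = 1/\sigma_{d-j+1}$ from Lemma~\ref{lem:Mahler_dual} together with $\sigma_1\cdots\sigma_d = \det\cL/\vol\cB$ from Corollary~\ref{cor:Mahler_general}.

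However, the ``boundary case'' you flag at the end is not a benign technicality that ``proceeds identically'': the identity actually fails there, both in your argument and in the paper's. If $\sigma_t = 1$, then $\sigma^*_{d-t+1} = 1/\sigma_t = 1 \leq 1$, so the largest $j$ with $\sigma^*_j \leq 1$ strictly exceeds $d-t$, and Lemma~\ref{lem:lattice_body_intersection} applied to $(\cL^*,\cB^*)$ picks up extra factors of $q$ beyond $q^{d-t}\prod_{j\le d-t}(1/\sigma^*_j)$. A concrete counterexample: take $d = 1$, $\cL = \F_q[T]$, $\cB = B_1$. Then $\sigma_1 = 1$, $t = 1$, $|\cL\cap\cB| = q$; the dual pair is $\cL^* = \F_q[T]$, $\cB^* = B_1$, so $|\cL^*\cap\cB^*| = q$, whereas the formula $\frac{\det\cL}{q^{2t-d}\vol\cB}|\cL\cap\cB|$ evaluates to $1$. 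The correct unconditional statement is the inequality $|\cL^*\cap\cB^*| \ge \frac{\det\cL}{q^{2t-d}\vol\cB}|\cL\cap\cB|$, with equality exactly when $\sigma_i \neq 1$ for all $i$. This is harmless for the paper's application in Lemma~\ref{lem:17}, where only the lower-bound direction is used via equation~\eqref{eq:poly_lat_lem_dual_bound}, but your proof (like the paper's) needs either the hypothesis $\sigma_i \neq 1$ for all $i$ or a downgrade from equality to ``$\geq$''.
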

\begin{proof}
Note that  Lemma~\ref{lem:lattice_body_intersection} and Lemma~\ref{lem:Mahler_dual} imply 
\begin{align}
\label{eq:LLstar1}
|\cL\cap \cB|=q^{t}\prod_{j\le t}\frac{1}{\sigma_j}, \quad |\cL^{*}\cap \cB^{*}|=q^{d-t}\prod_{j\le d-t}\frac{1}{\sigma^{*}_j}.
\end{align}
Hence from Corollary~\ref{cor:Mahler_general} and Lemma~\ref{lem:Mahler_dual}
\begin{align*}
|\cL\cap \cB|=q^{t}\prod_{j\le t}\frac{1}{\sigma_j}&=q^{t}\prod_{j\le d}\frac{1}{\sigma_j}\prod_{t+1\le j \le d}\sigma_j \\
&=q^{2t-d}\frac{\vol{\cB}}{\det\cL}\prod_{1\le j \le d-t}\frac{q}{\sigma^{*}_j},
\end{align*}
and the result follows from~\eqref{eq:LLstar1}.
\end{proof}

We next have a series of general results regarding the intersections of lattices, convex bodies and arbitrary sets. By a \textit{multiset} we will mean a set with repetition. If we intersect a multiset with a set, we will again take into account repetition from the multiset. That is, if $\{x, x\}$ is a multiset then $\{x, x\} \cap \{x\} = \{x, x\}$ and $|\{x, x\}| = 2$. But of course, this will only apply if we specify that we are dealing with multisets. This flexibility will allow for greater generality with regard to the following two lemmas. 

\begin{lemma}
\label{lem:first}
Let $\cS \subseteq \F_q[T]^d \setminus \{0\}$ and $\cB\subseteq \Ki^{d}$ be a multiset and convex body, respectively. Let 
$$\cL_{\alpha}\subseteq \F_q[T]^{d}, \quad \alpha \in \cI$$ be a family of lattices such that for any distinct $\alpha, \alpha' \in I$, 

$$\cL_{\alpha} \cap \cL_{\alpha'} \cap \cB = \{0\}.$$
For each $\alpha \in \cI$ let
$$\sigma_{j,\alpha}, \quad 1\le j\le d$$
denote the successive minima of $\cL_{\alpha}$ with respect to $\cB$. If 
\begin{align}
\label{eq:only-first}
\sigma_{1,\alpha}\le 1, \quad \sigma_{2,\alpha}>1,
\end{align}
then
\begin{align*}
\sum_{\alpha \in I}|\cL_{\alpha}\cap \cS \cap \cB|^2~\leq ~ q\sum_{\substack{s_1, s_2, \in \cS \\ \lambda \in \F_q(T) \\ s_1 = \lambda s_2} }1.
\end{align*}
\end{lemma}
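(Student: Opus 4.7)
The plan is straightforward: expand the square on the left-hand side as a sum over ordered pairs, swap the order of summation, and then use the hypotheses to bound the resulting inner count and identify which pairs $(s_1,s_2)$ can contribute. Concretely, I would write
$$\sum_{\alpha \in \cI} |\cL_\alpha \cap \cS \cap \cB|^2 \;=\; \sum_{(s_1,s_2) \in \cS \times \cS} \#\{\alpha \in \cI : s_1, s_2 \in \cL_\alpha \cap \cB\},$$
so the task splits into a multiplicity bound and a structural statement.

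For the multiplicity bound, I would invoke the disjointness hypothesis $\cL_\alpha \cap \cL_{\alpha'} \cap \cB = \{0\}$ for $\alpha \neq \alpha'$: since $\cS \subseteq \F_q[T]^d \setminus\{0\}$, any $s_1 \in \cL_\alpha \cap \cS \cap \cB$ is a \emph{nonzero} element of $\cB$, hence can belong to at most one of the lattices $\cL_\alpha$. Consequently, for every fixed $(s_1,s_2) \in \cS \times \cS$ the inner cardinality above is at most $1$.

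For the structural step I would use condition~\eqref{eq:only-first}. By the definition of the second successive minimum, $\sigma_{2,\alpha}>1$ says that no vector of $\cL_\alpha$ satisfying $N_\cB(\cdot)\le 1$ is $\F_q[T]$-linearly independent of the first-minimum vector $x^{(1)}_\alpha$. Therefore every element of $\cL_\alpha \cap \cB$ is $\F_q[T]$-dependent on $x^{(1)}_\alpha$, and so (since $x^{(1)}_\alpha \neq 0$) lies in the one-dimensional $\F_q(T)$-subspace $\F_q(T)\cdot x^{(1)}_\alpha$. In particular, any two nonzero vectors $s_1,s_2 \in \cL_\alpha\cap \cB$ are proportional over $\F_q(T)$, i.e.\ $s_1 = \lambda s_2$ for a unique $\lambda \in \F_q(T)$.

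Combining the two observations shows that only $\F_q(T)$-proportional pairs contribute to the interchanged sum, and each contributes at most $1$, which gives the claimed inequality (the factor of $q$ on the right providing comfortable slack that absorbs the diagonal and any degenerate contributions). I expect the main obstacle to be phrasing the structural step correctly: the successive-minima definition is stated in terms of $\F_q[T]$-independence, but the conclusion we need is $\F_q(T)$-proportionality, and a little care is needed to pass cleanly between these two notions. All other steps are routine reordering of sums.
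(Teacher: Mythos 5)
Your argument is correct and is a genuinely cleaner route than the paper's. The paper proves the structural fact that each $s\in\cS\cap\cB\cap\cL_\alpha$ is an $\F_q[T]$-multiple of a fixed $a_\alpha$, expands the square into sums over tuples $(s_1,s_2,\lambda_1,\lambda_2,a_\alpha)$, and then passes to coprime pairs $(\lambda_1/g,\lambda_2/g)$ via a $\gcd$-based injectivity argument; the factor $q$ on the right-hand side is there to absorb the $\F_q^\times$-ambiguity in representing the ratio $\lambda_1/\lambda_2$ as a coprime fraction. You instead expand the square, interchange the order of summation so that the inner sum counts $\#\{\alpha: s_1,s_2\in\cL_\alpha\cap\cB\}$, and immediately observe that this count is at most one: a nonzero $s_1\in\cL_\alpha\cap\cB$ can lie in at most one of the $\cL_\alpha$ by the pairwise-disjointness hypothesis. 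The structural step ($\sigma_{1,\alpha}\le 1<\sigma_{2,\alpha}$ forces everything in $\cL_\alpha\cap\cB$ into the $\F_q(T)$-line through $x^{(1)}_\alpha$) is used only to identify which pairs $(s_1,s_2)$ can contribute, and uniqueness of the proportionality constant $\lambda$ (which holds because $s_2\neq 0$) lets you pass from counting pairs to counting triples. Net effect: you sidestep the $\gcd$ manipulation entirely and in fact establish the inequality \emph{without} the extra factor of $q$, so your version is both simpler and slightly sharper. One small cosmetic point: since $x^{(1)}_\alpha$ is part of an $\F_q[T]$-basis of $\cL_\alpha$ (Corollary~\ref{cor:Mahler_general}), every $s\in\cL_\alpha\cap\cB$ is in fact an $\F_q[T]$-multiple of $x^{(1)}_\alpha$, not merely an $\F_q(T)$-multiple; this is what the paper uses, but your weaker $\F_q(T)$-span observation already suffices for the claimed conclusion.
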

\begin{proof}
The assumption~\eqref{eq:only-first} implies that for each $\alpha\in \cI$ there exists some non-zero
$$a_{\alpha}\in \F_q[T]^{d},$$
such that any $s\in \cS\cap \cB\cap \cL_{\alpha}$ satisfies 
$$s=\lambda a_{\alpha},$$
for some $\lambda \in \F_q[T]\setminus \{0\}$.  For a given $\alpha$, let $\Lambda_\alpha$ be the set of all such $\lambda$ such that $\lambda a_{\alpha} \in \cL_{\alpha}\cap \cB$. Therefore 
\begin{align*}
\sum_{\alpha \in I}|\cL_{\alpha}\cap \cS \cap \cB|^2
\leq\sum_{\alpha \in I}\left(\sum_{\substack{s\in \cS}}\sum_{\substack{\lambda \in \Lambda_\alpha \\ s = \lambda a_\alpha}}1\right)^2 
&= \sum_{s_1, s_2 \in \cS}~\mathop{\sum}\limits_{\substack{\alpha \in I \\ \lambda_1, \lambda_2 \in \Lambda_\alpha^2 \\ s_1=\lambda_1a_{\alpha} \\ s_2=\lambda_2a_{\alpha}}}1.
\end{align*}
 Now for any $\lambda_1, \lambda_2$ in the above sum, let $g$ denote their greatest common divisor. Then we argue that the tuple $(s_1, s_2, \lambda_1/g, \lambda_2/g, ga_{\alpha})$ is uniquely determined by the tuple $(s_1, s_2, \lambda_1, \lambda_2, a_{\alpha})$. To see this, suppose that we had 
$$(s_1, s_2, \lambda_1/g, \lambda_2/g, ga_{\alpha}) = (s_1', s_2', \lambda_1'/g', \lambda_2'/g', g'a_{\alpha}')$$
for $(s_1, s_2, \lambda_1, \lambda_2, a_{\alpha}),(s_1', s_2', \lambda_1', \lambda_2', a_{\alpha}')$ both elements in the above sum. Then the equality $g'a_{\alpha}' = ga_{\alpha}$ implies that $a_\alpha = a_{\alpha}'$ since $g'a_{\alpha}' \in \cL_{\alpha'} \cap \cB$ and $ga_{\alpha} \in \cL_{\alpha} \cap \cB$. A simple substitution then yields $\lambda_1 = \lambda_1'$  and $\lambda_2 = \lambda_2'$. 

Thus we may write 
\begin{align*}
\sum_{\alpha \in I}|\cL_{\alpha}\cap \cS \cap \cB|^2
&\leq 
\sum_{s_1, s_2 \in \cS}~\mathop{\sum}\limits_{\substack{\lambda_1, \lambda_2 \in \F_q[T]\setminus \{0\}\\ (\lambda_1, \lambda_2) = 1 \\ a \in \F_q[T]^d \\ s_1=\lambda_1a \\ s_2=\lambda_2a}}1.
\end{align*}
Of course in the above, $(s_1, s_2, \lambda_1, \lambda_2)$ uniquely determines $a$, so we have 
\begin{align*}
\sum_{\alpha \in I}|\cL_{\alpha}\cap \cS \cap \cB|^2
&\leq 
\sum_{s_1, s_2 \in \cS}~\mathop{\sum}\limits_{\substack{\lambda_1, \lambda_2 \in \F_q[T]\setminus \{0\}\\ (\lambda_1, \lambda_2) = 1  \\ s_1=s_2 \lambda_1/\lambda_2}}1 \\
&\leq q\sum_{\substack{s_1, s_2, \in \cS \\ \lambda \in \F_q(T) \\ s_1 = \lambda s_2} }1
\end{align*}
as desired. 
\end{proof}

\begin{lemma}
\label{lem:first-last}
Let notation be as in Lemma \ref{lem:first}. If 
\begin{align}
\label{eq:only-first-1}
\sigma_{d,\alpha}\le 1,
\end{align}
and for each $\alpha\in \cI$ 
\begin{align}
\label{eq:fl-lb}
\det{\cL_{\alpha}}\ge A
\end{align}
for some non-zero $A \in \mathbb{R}$ then 
\begin{align*}
\sum_{\alpha \in I}|\cL_{\alpha}\cap \cS \cap \cB|^2 
&\le  q^d\frac{\vol{B}}{A}\sum_{\substack{s_1, s_2 \in \cS \\ s_1 = s_2}}1.
\end{align*}
\end{lemma}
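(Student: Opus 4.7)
The plan is to bound $|\cL_\alpha \cap \cS \cap \cB|^2$ via a Cauchy--Schwarz step that produces the factor $|\cL_\alpha \cap \cB|$, control that factor using the hypothesis $\sigma_{d,\alpha}\le 1$ together with Lemma~\ref{lem:largest}, and then use the disjointness assumption to collapse the sum over $\alpha$. Let $m_\cS(s)$ denote the multiplicity of $s$ in the multiset $\cS$, so that
$$|\cL_\alpha \cap \cS \cap \cB| = \sum_{s \in \cL_\alpha \cap \cB} m_\cS(s).$$
By Cauchy--Schwarz applied to the pairing $m_\cS(s)\cdot 1$ over the (finite) set $\cL_\alpha \cap \cB$,
$$|\cL_\alpha \cap \cS \cap \cB|^2 \le |\cL_\alpha \cap \cB| \sum_{s \in \cL_\alpha \cap \cB} m_\cS(s)^2.$$

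Next, since $\sigma_{d,\alpha}\le 1$, the equality case of Lemma~\ref{lem:largest} gives $|\cL_\alpha \cap \cB| = q^d \vol\cB / \det\cL_\alpha$, and then~\eqref{eq:fl-lb} yields $|\cL_\alpha \cap \cB| \le q^d \vol\cB / A$. Substituting this bound, summing over $\alpha \in \cI$, and interchanging the order of summation,
$$\sum_{\alpha \in \cI} |\cL_\alpha \cap \cS \cap \cB|^2 \le q^d \frac{\vol\cB}{A} \sum_{s} m_\cS(s)^2 \sum_{\alpha \in \cI}[s \in \cL_\alpha \cap \cB].$$

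Finally, for any non-zero $s$, the disjointness hypothesis $\cL_\alpha \cap \cL_{\alpha'} \cap \cB = \{0\}$ for $\alpha \neq \alpha'$ forces $s$ to lie in at most one set $\cL_\alpha \cap \cB$, so the inner indicator sum is at most $1$; since $\cS \subseteq \F_q[T]^d\setminus\{0\}$ contains no zero vectors, this applies to every $s$ with $m_\cS(s)>0$. It remains to note that $\sum_s m_\cS(s)^2$ is precisely the count of ordered pairs $(s_1,s_2)\in \cS\times \cS$ with $s_1=s_2$, yielding the claimed bound.

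The only real subtlety, rather than a genuine obstacle, is that $\cS$ is a multiset and hence one cannot bound $|\cL_\alpha \cap \cS \cap \cB|$ directly by $|\cL_\alpha \cap \cB|$; using Cauchy--Schwarz to extract one clean factor of $|\cL_\alpha \cap \cB|$ (with the multiplicities gathered into $\sum m_\cS(s)^2$) is what makes the disjointness argument convert the sum over $\alpha$ into the diagonal $s_1 = s_2$ count on the right-hand side.
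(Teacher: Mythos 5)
Your proof is correct and follows essentially the same route as the paper's: Cauchy--Schwarz to extract $|\cL_\alpha\cap\cB|$, the equality case of Lemma~\ref{lem:largest} with~\eqref{eq:fl-lb} to bound that factor uniformly, and the disjointness hypothesis to collapse the $\alpha$-sum into the diagonal count. You also make explicit (which the paper leaves implicit) that $\cS\subseteq\F_q[T]^d\setminus\{0\}$ is what ensures the indicator sum over $\alpha$ is at most $1$ for every relevant $s$.
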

\begin{proof}
Firstly by the Cauchy-Schwarz inequality 
\begin{align*}
\sum_{\alpha \in I}|\cL_{\alpha}\cap \cS \cap \cB|^2&=\sum_{\alpha \in I}\left(\sum_{x\in \cL_{\alpha}\cap \cB}\sum_{\substack{s\in \cS \\ s=x}}1\right)^2 \\ 
& \le \sum_{\alpha \in \cI}|\cL_{\alpha}\cap \cB|\sum_{x\in \cL_{\alpha}\cap \cB}|\{ s\in \cS \ : \ s=x\}|^2.
\end{align*}
The assumptions~\eqref{eq:only-first-1} and~\eqref{eq:fl-lb} combined with Corollary \ref{cor:Mahler_general} and Lemma \ref{lem:largest} imply 
\begin{align*}
\sum_{\alpha \in I}|\cL_{\alpha}\cap \cS \cap \cB|^2 
& \le q^d\frac{\vol{B}}{A}\sum_{\alpha \in \cI}\sum_{x\in \cL_{\alpha}\cap \cB}|\{ s\in \cS \ : \ s=x\}|^2\\
&\leq q^d\frac{\vol{B}}{A}\sum_{\substack{s_1, s_2 \in \cS}}\sum_{\alpha \in \cI}\sum_{\substack{x\in \cL_{\alpha}\cap \cB \\ x = s_1 \\ x=s_2}}1.
\end{align*}
Since, by assumption, any two lattices have trivial intersection inside $\cB$ we can now write 
\begin{align*}
\sum_{\alpha \in I}|\cL_{\alpha}\cap \cS \cap \cB|^2 
&\leq q^d\frac{\vol{B}}{A}\sum_{\substack{s_1, s_2 \in \cS}}\sum_{\substack{x \in \F_q[T]^d \\ x=s_1 \\ x=s_2}}1 \\
&=  q^d\frac{\vol{B}}{A}\sum_{\substack{s_1, s_2 \in \cS \\ s_1 = s_2}}1
\end{align*}
as desired. 

\end{proof}

\begin{lemma}
\label{lem:last} 
Let $\cB$ be a convex body and let $\cS,\cL\subseteq \F_q[T]^{d}$ be a set  and lattice, respectively. Let $\sigma_1,\dots,\sigma_d$ denote the successive minima of $\cL$ with respect to $\cB$. Let $t$ denote the largest integer such that
\begin{align}
\label{eq:all-last}
\sigma_{t}\le 1.
\end{align}
For any integer $k$ and  $\beta \in [1, \infty]$, we have  
\begin{align}
\label{eq:last1}
|\cL\cap \cS\cap \cB|^{k}\le \|\cS^{(k)}\|_{\beta}\left(\prod_{j\le t}\frac{q}{\sigma_j}\right)^{1-1/\beta}
\end{align}
with $\cS^{(k)}$ as in (\ref{eq:kth_convolution}). In particular, if $\sigma_d\le 1$ (that is, $t = d$) then 
\begin{align}
\label{eq:last2}
|\cL\cap \cS\cap \cB|^{k}\le q^{d(1-1/\beta)}\|\cS^{(k)}\|_{\beta}\left(\frac{\vol{\cB}}{\det{\cL}}\right)^{1-1/\beta}.
\end{align}
\begin{proof}
If $s_1,\dots,s_k\in \cL\cap \cB$ then 
$$s_1+\dots+s_k\in \cL \cap \cB.$$
Therefore 
\begin{align*}
|\cL\cap \cS\cap \cB|^{k}&\le\sum_{\substack{s_1,\dots,s_k\in \cS \\ s_1+\dots+s_k\in \cL\cap B}}1 \\ 
&=\sum_{\substack{\lambda \in \cL\cap \cB}}r(\lambda),
\end{align*}
where 
$$r(\lambda)=|\{ s_1,\dots,s_k\in \cS \ : \ s_1+\dots+s_k=\lambda\}|.$$
By H\"{o}lder's inequality 
\begin{align*}
|\cL\cap \cS\cap \cB|^{k}&\le \left( \sum_{\lambda\in \F_q[T]}r(\lambda)^{\beta}\right)^{1/\beta}|\cL\cap \cB|^{1-1/\beta} \\ 
&= \|\cS^{(k)}\|_{\beta}|\cL\cap \cB|^{1-1/\beta}.
\end{align*}
By Lemma~\ref{lem:lattice_body_intersection} and~\eqref{eq:all-last}
\begin{align*}
|\cL\cap \cS\cap B|^{k}\le \|\cS^{(k)}\|_{\beta}\left(\prod_{j\le t}\frac{q}{\sigma_j}\right)^{1-1/\beta},
\end{align*}
which establishes~\eqref{eq:last1}. The inequality~\eqref{eq:last2} follows after combining~\eqref{eq:last1} with Corollary~\ref{cor:Mahler_general}.
\end{proof}
\end{lemma}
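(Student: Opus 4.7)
The plan is to exploit the fact that convex bodies in $\Ki^d$ are additive subgroups (as noted in the excerpt), so $\cL \cap \cB$ is itself an $\F_q[T]$-module closed under addition. In particular, if $s_1, \ldots, s_k$ all lie in $\cL \cap \cS \cap \cB$, then $s_1 + \cdots + s_k$ lies in $\cL \cap \cB$. This is the core structural observation that makes the convolution $\cS^{(k)}$ naturally enter the bound.

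First, I would raise $|\cL \cap \cS \cap \cB|$ to the $k$-th power by summing over $k$-tuples of $\cS$ whose sum lies in $\cL \cap \cB$. Concretely,
\begin{align*}
|\cL\cap \cS\cap \cB|^{k} \le \bigl|\{(s_1,\dots,s_k)\in \cS^{k} : s_1+\cdots+s_k \in \cL \cap \cB\}\bigr| = \sum_{\lambda \in \cL \cap \cB} \cS^{(k)}(\lambda),
\end{align*}
using the definition of $\cS^{(k)}$ from \eqref{eq:kth_convolution}. Then I apply H\"older's inequality with exponents $\beta$ and $\beta/(\beta-1)$ to split the sum into an $\ell_\beta$-norm of $\cS^{(k)}$ and a counting factor for the number of $\lambda$, which gives
\begin{align*}
\sum_{\lambda \in \cL \cap \cB} \cS^{(k)}(\lambda) \le \|\cS^{(k)}\|_\beta \cdot |\cL \cap \cB|^{1-1/\beta}.
\end{align*}

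For \eqref{eq:last1}, I would then invoke Lemma~\ref{lem:lattice_body_intersection} to express $|\cL \cap \cB|$ exactly. Since the successive minima are powers of $q$, the hypothesis $\sigma_j > 1$ for $j > t$ forces $\lceil q/\sigma_j \rceil = 1$ in those coordinates, and we obtain $|\cL \cap \cB| = \prod_{j \le t} q/\sigma_j$, which immediately yields \eqref{eq:last1}. For \eqref{eq:last2}, the assumption $t = d$ places us in the equality case of Lemma~\ref{lem:largest}, so $|\cL \cap \cB| = q^d \vol \cB / \det \cL$, and substituting gives the result.

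There is no genuine obstacle here: the entire argument is a short combination of the additive closure of $\cL \cap \cB$, H\"older's inequality, and the exact lattice point count from Section~\ref{sec:Counting_and_localtoglobal}. The only subtle point worth double-checking is that $\lceil q/\sigma_j \rceil$ really does collapse to $1$ for $j > t$, which relies on the discreteness of the value group of $|\cdot|$ on $\Ki$; this is implicit in the construction of successive minima over $\F_q[T]$ and does not require additional argument.
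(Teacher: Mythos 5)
Your proposal matches the paper's proof essentially step for step: additive closure of $\cL\cap\cB$, the bound $|\cL\cap\cS\cap\cB|^k \le \sum_{\lambda\in\cL\cap\cB}\cS^{(k)}(\lambda)$, H\"older, and then Lemma~\ref{lem:lattice_body_intersection} (with Corollary~\ref{cor:Mahler_general}, equivalently Lemma~\ref{lem:largest}, for the $t=d$ case). Your remark that $\lceil q/\sigma_j\rceil = 1$ for $j>t$ is exactly what makes the product collapse to $\prod_{j\le t} q/\sigma_j$, and the argument is correct as written.
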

Our next result is a consequence of Lemma~\ref{lem:last}. 
\begin{cor}
\label{cor:small-point}
With notation and conditions as in Lemma~\ref{lem:last}, suppose that $t \geq 1$. Then there exists some $x\neq 0$ satisfying
\begin{align*}
x\in \cL \cap T^\alpha\cB
\end{align*}
where
$$\alpha = \left\lfloor\log_q q\left(\frac{\|\cS^{(k)}\|_\beta}{|\cL \cap \cS \cap \cB|^k}\right)^{\beta(\beta-1)^{-1}t^{-1}}\right\rfloor.$$
\end{cor}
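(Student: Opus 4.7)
The plan is to deduce Corollary~\ref{cor:small-point} directly from Lemma~\ref{lem:last} by extracting information about the first successive minimum $\sigma_1$ of $\cL$ with respect to $\cB$, and then exhibiting the vector $x^{(1)}$ that realizes $\sigma_1$.

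First I would apply the bound~\eqref{eq:last1} from Lemma~\ref{lem:last} and rearrange to isolate the product $\prod_{j\le t}\sigma_j^{-1}$. Since the successive minima satisfy $\sigma_1\le \sigma_2\le\dots\le \sigma_t$, we have the elementary comparison
$$\prod_{j\le t}\frac{q}{\sigma_j} \le \frac{q^t}{\sigma_1^t}.$$
Substituting this into~\eqref{eq:last1} and solving for $\sigma_1$ yields
$$\sigma_1 \le q\left(\frac{\|\cS^{(k)}\|_\beta}{|\cL\cap\cS\cap\cB|^k}\right)^{\beta(\beta-1)^{-1}t^{-1}}.$$

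Next I would observe that $\sigma_1$ is automatically an integral power of $q$. Indeed, from~\eqref{eq:normfunction_inf} the norm $N_\cB$ takes values in $\{q^n : n\in\Z\}\cup\{0\}$ (since $|\xi|\in q^{\Z}\cup\{0\}$ for all $\xi\in \Ki$), and $\sigma_1 = N_\cB(x^{(1)})$ for the first minimum vector $x^{(1)}\in\cL\setminus\{0\}$. Therefore $\log_q \sigma_1$ is an integer, and the displayed upper bound on $\sigma_1$ allows us to pass to the floor:
$$\log_q \sigma_1 \le \left\lfloor \log_q q\left(\frac{\|\cS^{(k)}\|_\beta}{|\cL\cap\cS\cap\cB|^k}\right)^{\beta(\beta-1)^{-1}t^{-1}} \right\rfloor = \alpha.$$

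Finally, I would conclude by translating $N_\cB(x^{(1)})\le q^\alpha$ into the containment $x^{(1)}\in T^\alpha\cB$. From the definition $N_\cB(x^{(1)}) = \min_{\xi:\, x^{(1)}\in \xi\cB}|\xi|$, there exists $\xi\in \Ki$ with $x^{(1)}\in\xi\cB$ and $|\xi|\le q^\alpha$; writing $x^{(1)} = T^\alpha\cdot(T^{-\alpha}\xi)\,b$ for some $b\in\cB$, the factor $T^{-\alpha}\xi$ lies in $B_1$, and since $\cB$ is a $B_1$-module we conclude $x^{(1)}\in T^\alpha\cB$, completing the proof. There is no serious obstacle here; the only point requiring care is the integrality of $\log_q\sigma_1$, which legitimizes the appearance of the floor function in $\alpha$.
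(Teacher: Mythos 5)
Your proof is correct and follows essentially the same route as the paper: rearrange \eqref{eq:last1} using $\sigma_1\le\sigma_j$ to isolate $\sigma_1$, invoke the fact that $\sigma_1$ is an integer power of $q$ to justify taking the floor, and then unpack $N_\cB(x^{(1)})\le q^\alpha$ via the $B_1$-module structure of $\cB$ to conclude $x^{(1)}\in T^\alpha\cB$. The only difference is cosmetic — you write out the comparison $\prod_{j\le t}q/\sigma_j\le(q/\sigma_1)^t$ explicitly, which the paper applies silently.
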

\begin{proof}
Under the assumption that $\sigma_t \leq 1$ Lemma \ref{lem:last} implies 
\begin{align*}
|\cL\cap \cS\cap \cB|^{k}\le \|\cS^{(k)}\|_{\beta} \left(\frac{q}{\sigma_1}\right)^{t(1-1/\beta)}.
\end{align*}
For any positive real number $A$ we let $\lfloor A \rfloor_q$ denote the largest power of $q$ less than or equal to $A$. Now since $\sigma_1$ is a power of $q$ we obtain 
\begin{align*}
    \sigma_1 \leq  \left\lfloor q\left(\frac{\|\cS^{(k)}\|_\beta}{|\cL \cap \cS \cap \cB|^k}\right)^{\beta(\beta-1)^{-1}t^{-1}}\right\rfloor_q.
\end{align*}
If $ x^{(1)} \in \cL $ is the vector corresponding to $\sigma_1$, then by definition of successive minima this means that there exists some $\xi \neq 0 $ such that $ x^{(1)} \in \xi\cB$ and 
$$|\xi| \leq  \left\lfloor q\left(\frac{\|\cS^{(k)}\|_\beta}{|\cL \cap \cS \cap \cB|^k}\right)^{\beta(\beta-1)^{-1}t^{-1}}\right\rfloor_q. $$
Since $\cB$ is closed under multiplication from $B_1$, the result follows. 
\end{proof}

\begin{lem}\label{lem:intersection_3_cases} 
Let $F \in \F_q[T]$ of degree $r$ and let $a_1, a_2, a_3$ be coprime to $F$. Let $N,M,L \geq 1$ be real numbers. Define the lattice 
$$\cL = \{(x_1,x_2,x_3) \in \F_{q}[T]^3~:~ a_1x_1 + a_2x_2 + a_3x_3 \equiv 0 \Mod{F}\}  $$
and the convex body 
$$\cB = \{(x_1, x_2, x_3) \in \F_q[T]^3~:~|x_1| \leq N,~ |x_2| \leq M,~ |x_3| \leq L\}.  $$
Let $\sigma_1, \sigma_2, \sigma_3$ denote the successive minima of $\cL$ with respect of $\cB$. Then at least one of the following holds:
\begin{enumerate}
    \item[(a)] $|\cL \cap \cB| \leq \max\{NMLq^{3-r}, 1\}, $
    \item[(b)] $\sigma_1 \leq 1$ and $\sigma_2 > 1$, or
    \item[(c)] there exists some $w$ which is coprime to $F$ and $x_1, x_2, x_3 \in \F_q[T]$ satisfying
    $$|x_1| \ll \frac{ML}{|\cL \cap \cB|},~ |x_2| \ll \frac{NL}{|\cL \cap \cB|},~ |x_3| \ll \frac{NM}{|\cL \cap \cB|}$$
    such that $a_iw \equiv x_i \Mod{F}$. 
\end{enumerate}
\end{lem}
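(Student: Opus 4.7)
The plan is to case split on how the successive minima $\sigma_1 \le \sigma_2 \le \sigma_3$ of $\cL$ with respect to $\cB$ compare to $1$. By Lemma~\ref{lem:det_of_modular_lattice}, $\det \cL = |F| = q^r$, and by construction $\vol \cB = NML$. Corollary~\ref{cor:Mahler_general} gives $\sigma_1\sigma_2\sigma_3 = q^r/(NML)$, and Lemma~\ref{lem:lattice_body_intersection} says $|\cL\cap\cB| = \prod_{i=1}^{3}\lceil q/\sigma_i\rceil$.

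The easy subcases dispose of everything but the dual regime. If $\sigma_1 > 1$ then every factor above is $1$ and $|\cL\cap\cB|=1$, yielding (a); if $\sigma_3\le 1$ then every factor is $q/\sigma_i$ and multiplying gives $|\cL\cap\cB|=q^3\vol\cB/\det\cL=NMLq^{3-r}$, again giving (a); if $\sigma_1\le 1<\sigma_2$, this is precisely (b). So the substantive case is $\sigma_2\le 1<\sigma_3$, in which $|\cL\cap\cB|=q^2/(\sigma_1\sigma_2)$, and we must produce the $w$ of (c).

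To do this I would dualize. Lemma~\ref{lem:Mahler_dual} forces $\sigma_1^*=1/\sigma_3<1\le \sigma_2^*,\sigma_3^*$, so the first dual minimum provides a nonzero $v^*\in \cL^*$ with $N_{\cB^*}(v^*)=\sigma_1^*$. By Lemma~\ref{lem:dual_modular_eq} this vector is $v^*=(y_1/F,y_2/F,y_3/F)$ with $y_i\equiv a_iw\pmod F$ for some $w\in\F_q[T]$. Since $\cB^*$ is the box with sides $1/N,1/M,1/L$, membership in $\sigma_1^*\cB^*$ gives $|y_i|\le |F|\sigma_1^*\cdot\mathrm{side}_i^{-1}$. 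Substituting $\sigma_1^*=1/\sigma_3$, and using $\sigma_3=|F|\cdot|\cL\cap\cB|/(NMLq^2)$, which follows from the formulas $\sigma_1\sigma_2\sigma_3=|F|/(NML)$ and $|\cL\cap\cB|=q^2/(\sigma_1\sigma_2)$, yields $|F|\sigma_1^*=NMLq^2/|\cL\cap\cB|$, hence
\[
|y_1|\ll \frac{ML}{|\cL\cap\cB|},\quad |y_2|\ll \frac{NL}{|\cL\cap\cB|},\quad |y_3|\ll \frac{NM}{|\cL\cap\cB|},
\]
as required.

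The delicate point, which I expect to be the main obstacle, is ensuring $\gcd(w,F)=1$. Since $v^*\neq 0$ and each $a_i$ is a unit modulo $F$, we automatically get $w\not\equiv 0\pmod F$, so when $F$ is irreducible $\gcd(w,F)=1$ is immediate. For general $F$ the set of admissible $w$'s forms an ideal of $\F_q[T]/F$, and we need it to contain a unit; I would handle this either by restricting to the irreducible case relevant to the applications, or by extracting $g=\gcd(w,F)$, passing to the smaller modulus $F/g$ where $w/g$ is coprime, and arguing that after this reduction we can either restore coprimality modulo the original $F$ at the cost of adjusting the implied constants in (c), or else absorb the degenerate subcase into (a) via the resulting size bound on $|\cL\cap\cB|$.
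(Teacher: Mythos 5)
Your proof takes the same route as the paper's: case split on the successive minima relative to $1$, with the three easy subcases giving (a) or (b), and the regime $\sigma_2\le 1<\sigma_3$ handled by dualizing to $\cL^*$ and $\cB^*$ via Lemmas~\ref{lem:Mahler_dual}, \ref{lem:dual_modular_eq}, \ref{lem:lattice_body_intersection}, with the same bookkeeping ($|F|\sigma_1^*\asymp NML/|\cL\cap\cB|$) producing the bounds on $|x_i|$.

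The one place you go beyond the paper's writeup is in flagging the coprimality requirement $\gcd(w,F)=1$ in (c); the paper asserts this in the statement but never verifies it. Your argument for irreducible $F$ is correct once stated carefully: it is not merely $v^*\neq 0$ that forces $w\not\equiv 0 \Mod{F}$, but the combination of $\sigma_1^*<1$ and $N,M,L\ge 1$, which gives $|y_i|<|F|$, so that $F\mid y_i$ would force $y_i=0$ for all $i$. For composite $F$, your second hedge (extracting $g=\gcd(w,F)$) does not directly work, since the congruences $a_iw\equiv x_i$ must hold modulo the original $F$, not $F/g$; and since $\gcd(a_i,F)=1$ gives $\gcd(x_i,F)=\gcd(w,F)$ for all $i$, the $x_i$ would all share the factor $g$ with $F$, which cannot simply be normalized away. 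So as stated for arbitrary $F$ the lemma is slightly stronger than either proof justifies; this is moot in the paper's only application (Lemma~\ref{lem:K_delta} inside Theorem~\ref{thm:T4_bound}, where $F$ is irreducible), but it is a genuine observation about the paper.
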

\begin{proof}
Firstly, we may write
$$\cB  = \{(x_1, x_2, x_3) \in \F_q[T]^3~:~|x_1| \leq q^n,~ |x_2| \leq q^m,~ |x_3| \leq q^\ell\} $$
where $n = \lfloor\log_q(N)\rfloor$, $m = \lfloor\log_q(M)\rfloor$ and $\ell = \lfloor\log_q(L)\rfloor$. This will help simplify some notation throughout. 

We assume (a) does not hold. Then $|\cL \cap B| > \max\{q^3NMLq^{-r}, 1\}$ so in particular $|\cL \cap B| > 1$. Lemma \ref{lem:lattice_body_intersection} implies that $\sigma_1 \leq 1$, and we wish to show that $\sigma_3 > 1$. For the sake of contradiction if we suppose $\sigma_3 \leq 1$ then Lemma \ref{lem:lattice_body_intersection} implies 
\begin{align}\label{eq:K_bound1}
   |\cL \cap B| = \frac{q^3}{\sigma_1\sigma_2\sigma_3}.  
\end{align}
One can see that $\det \cL= q^r$ by Lemma \ref{lem:det_of_modular_lattice}, and $\vol B  = q^{m + n + \ell}$ so by Corollary \ref{cor:Mahler_general} together with (\ref{eq:K_bound1}) we have 
$$|\cL \cap B| \leq  q^{3 + m+n+\ell-r} \leq q^{3-r}NML $$
which contradicts (a) not holding. Thus if (a) does not hold we must have $\sigma_3 > 1$. If $\sigma_2 > 1$ then this is (b).

Thus, we finally consider when 
\begin{align*}
\sigma_2\le 1 \quad \text{and} \quad \sigma_3>1.
\end{align*}
By Lemma \ref{lem:lattice_body_intersection} and Corollary  \ref{cor:Mahler_general}, 
$$|\cL \cap \cB| \leq \frac{q^2}{\sigma_1\sigma_2} \leq \sigma_3q^{2 + m+n+\ell-r}.$$
By Lemma \ref{lem:Mahler_dual} this implies
$$\sigma_1^* \ll \frac{q^{m+n+\ell-r}}{|\cL \cap \cB|}.$$
The result follows from the definition of successive minima, after noting that Lemma \ref{lem:dual_modular_eq} implies
\begin{align*}
    \cL^* = \bigg{\{}\bigg{(}&\frac{x_1}{F},\frac{x_2}{F}, \frac{x_3}{F}  \bigg{)} \in \F_q(T)^d ~:~ x_i \in \F_q[T]^d \text{ and } \\
     &\text{there exists some } w \in \F_q[T]^d \text{ such that } a_iw \equiv x_i \Mod{F}        \bigg{\}}
\end{align*}
and
$$\cB^* = \{(y_1, y_2, y_3) \in \F_q[T]^3:|x_1| \leq q^{-n},~ |x_2| \leq q^{-m}, ~|x_3| \leq q^{-\ell}\}. $$
\end{proof}

We next turn our attention to a specific type of modular lattice which can correspond to polynomials with a given root modulo $F$. The following result should be considered a variant of~\cite[Lemma~15]{BGKS}.

\begin{lemma}
\label{lem:17}
Let $F\in \F_{q}[T]$ be an irreducible polynomial of degree $r$ and suppose $0<\beta \le 1$.  For positive integers $d,\ell$ with $d > 2$ and $s\in \F_{q}[T]$ suppose the lattice
$$\cL=\{ (x_0,\dots,x_{d-1})\in \F_q[T]^{d} \ : x_0+x_1s+\dots+x_{d-1}s^{d-1}\equiv 0 \Mod{F} \},$$
and convex body 
$$\cB=\{ (x_0,\dots,x_{d-1})\in \F_q(T)_{\infty}^{d} \ : \ |x_j|< q^{\ell(d-j)}\},$$
satisfy 
\begin{align}
\label{eq:lowerboundassumption}
|\cL\cap \cB|\ge q^{\ell(\beta d-1)}.
\end{align}
Additionally suppose 
\begin{align}
\label{eq:lfconds}
\ell<\min \left\{  \frac{r-3d/2}{d^2-(2\beta+1)d+5},\frac{d-2}{d-1}\frac{r}{d^2-(\beta+1)d-1} \right\}.
\end{align}
Then either:
\begin{enumerate}
\item There exists some $v \leq d-1$ and $u_0,\dots,u_{v-1}$, not all zero modulo $F$,  such that 
\begin{align}
\label{eq:8-case1}
u_0+\dots+u_{v-1}s^{v-1}\equiv 0 \mod{F}, \quad |u_j|\leq q^{\ell(v-j-(\beta d-1)/(d-2))}.
\end{align}
\item There exists $u_0,\dots,u_{d-1}$, not all zero modulo $F$, such that 
\begin{align}
\label{eq:8-case2}
u_0+\dots+u_{d-1}s^{d-1}\equiv 0 \mod{F}, \quad |u_j|\leq q^{\ell(d-j-(\beta d-1)/2) + d}.
\end{align}
\end{enumerate}
\end{lemma}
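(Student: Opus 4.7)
The plan is to apply Mahler's successive minima framework to the lattice $\cL$ and the convex body $\cB$. Denote the successive minima by $\sigma_1 \leq \cdots \leq \sigma_d$ with corresponding $\F_q[T]$-linearly independent vectors $x^{(i)} = (u_0^{(i)}, \ldots, u_{d-1}^{(i)}) \in \cL$. Each $x^{(i)}$ encodes a polynomial $P_i(x) = \sum_j u_j^{(i)} x^{j}$ of degree at most $d-1$ with $P_i(s) \equiv 0 \pmod F$, and the definition of successive minima together with the product structure of $\cB$ gives $|u_j^{(i)}| \leq \sigma_i \cdot q^{\ell(d-j)}$. By Lemma~\ref{lem:det_of_modular_lattice}, $\det \cL = q^{r}$, while $\vol \cB = q^{\ell d(d+1)/2}$; Corollary~\ref{cor:Mahler_general} then yields $\sigma_1 \cdots \sigma_d = q^{r - \ell d(d+1)/2}$, and Lemma~\ref{lem:lattice_body_intersection} combined with the hypothesis~\eqref{eq:lowerboundassumption} forces $\sigma_1 \cdots \sigma_t \leq q^{t - \ell(\beta d - 1)}$, where $t$ is the number of minima with $\sigma_i \leq 1$. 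In particular $\sigma_1$ must be genuinely small.

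Next I would attempt to produce case~\eqref{eq:8-case1}. The idea is to find the smallest $v \leq d$ such that the short vectors $x^{(1)}, \ldots, x^{(v)}$ admit a nontrivial $\F_q(T)$-linear combination $y = \sum_i c_i x^{(i)}$ whose top $d - v$ coordinates vanish, so that $y$ corresponds (after clearing denominators) to a polynomial of degree at most $v-1$ with $s$ a root mod $F$. The coefficients $c_i$ are extracted via Cramer's rule on a suitable $(d-v) \times v$ submatrix, and the resulting bound on $|y_j|$ combines the individual $|u_j^{(i)}|$ estimates with the control on $\sigma_1 \cdots \sigma_v$ coming from Lemma~\ref{lem:lattice_body_intersection}. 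The factor $1/(d-2)$ appearing in the shift $(\beta d - 1)/(d-2)$ of~\eqref{eq:8-case1} should arise naturally from distributing the total weight $\ell(\beta d - 1)$ across the relevant minima in the Cramer determinant.

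If no choice of $v \leq d - 1$ provides the needed reduction, then effectively $\sigma_1$ is only moderately small. In that regime direct arithmetic using $\sigma_1 \sigma_2 \cdots \sigma_d = q^{r - \ell d(d+1)/2}$ together with~\eqref{eq:lowerboundassumption} yields $\sigma_1 \leq q^{d - \ell(\beta d - 1)/2}$, and then $x^{(1)}$ itself directly gives case~\eqref{eq:8-case2}, since
\[
|u_j^{(1)}| \leq \sigma_1 \cdot q^{\ell(d-j)} \leq q^{\ell(d - j - (\beta d - 1)/2) + d}.
\]

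The main obstacle I anticipate is the bookkeeping in the Cramer's rule step for case~\eqref{eq:8-case1}: one must precisely match the coefficient bound $q^{\ell(v - j - (\beta d - 1)/(d-2))}$, and one must rule out degenerate sub-cases in which an irreducible factor of some $P_i$ (extracted via Lemma~\ref{lem:factorisation-bound}) would create an unexpected small lattice point conflicting with the $\F_q[T]$-linear independence of the $x^{(i)}$. For this, I would invoke Corollary~\ref{cor:good}: the two-part bound on $\ell$ in~\eqref{eq:lfconds} is calibrated precisely so that the resultant hypothesis~\eqref{eq:deabconds} holds both when testing an irreducible factor against the $P_i$ and when propagating from $v \leq d-1$ back to the full $d$-dimensional system, ensuring that one of the two stated conclusions must be reached.
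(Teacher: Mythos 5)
Your overall framing—Mahler successive minima, the product formula from Corollary~\ref{cor:Mahler_general}, the intersection count from Lemma~\ref{lem:lattice_body_intersection}—is the right setting, and the fallback in your last displayed inequality does correctly handle the regime $t=1$ (where $t$ is the number of minima at most $1$). But the core of the proof contains two substantial gaps.

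First, your mechanism for producing conclusion~\eqref{eq:8-case1} is not the paper's, and it does not give the stated bound. The paper does not take a Cramer-type $\F_q(T)$-combination of the minimal vectors $x^{(1)},\dots,x^{(v)}$; instead, in the regime $2\le t\le d-2$ it uses Corollary~\ref{cor:small-point} (with $k=1$, $\beta=\infty$) to produce a \emph{single} nonzero $v\in\cL\cap T^{\alpha}\cB$ with $\alpha$ controlled by $|\cL\cap\cB|^{1/t}$; the factor $(\beta d-1)/(d-2)$ in~\eqref{eq:8-case1} comes precisely from $t\le d-2$. One then passes to an irreducible factor of the associated polynomial via Lemma~\ref{lem:factorisation-bound}, and uses Corollary~\ref{cor:good} against the polynomials attached to $x^{(2)},\dots,x^{(t)}$ to force the degree down. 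The Cramer approach fails on the bounds: already for $d=3$, $v=2$, eliminating the top coordinate gives $|y_j|\ll\sigma_1\sigma_2\,q^{\ell(d-j+1)}$, and the constraint from~\eqref{eq:lowerboundassumption} only forces $\sigma_1\sigma_2\le q^{2-\ell(\beta d-1)}$; this yields $|y_j|\ll q^{\ell(d-j-(\beta d-1))+\ell+2}$, which is far weaker than $q^{\ell(v-j-(\beta d-1)/(d-2))}$ for $d=3$, where the target exponent is $\ell(2-j-(\beta d-1))$. The extra $+\ell+2$ cannot be absorbed.

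Second, you treat ``if no small-$v$ reduction works, use $\sigma_1$ directly'' as a complete fallback, but this collapses two genuinely different cases. When $t=d$, the bound $\sigma_1\cdots\sigma_d=\det\cL/\vol\cB$ together with~\eqref{eq:lowerboundassumption} contradicts~\eqref{eq:lfconds}, so this case must be excluded outright. More seriously, when $t=d-1$ the naive $\sigma_1$-fallback does not reach~\eqref{eq:8-case2}: from $\prod_{i\le d-1}\sigma_i\le q^{d-1-\ell(\beta d-1)}$ you only get $\sigma_1\le q^{1-\ell(\beta d-1)/(d-1)}$, and requiring $q^{\ell(d-j-(\beta d-1)/(d-1))+1}\le q^{\ell(d-j-(\beta d-1)/2)+d}$ amounts to $\ell\le 2(d-1)^2/((\beta d-1)(d-3))$, which is \emph{not} implied by~\eqref{eq:lfconds} when $r$ (hence $\ell$) is large. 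The paper instead passes to the dual lattice $\cL^{*}$ and dual body $\cB^{*}$ via Lemma~\ref{lem:Mahler_dual} and Lemma~\ref{lem:dual_modular_eq} to extract a vector of the special form $u_j\equiv u_0 s^{j}\pmod F$ with sharp bounds, and then invokes the recursive factorization argument of Lemma~\ref{lem:recursive}; this is precisely where the first branch of~\eqref{eq:lfconds} (the one with $d^2-(2\beta+1)d+5$ in the denominator) is consumed. Your sketch never touches the dual lattice or Lemma~\ref{lem:recursive}, so this case is simply open in your argument.

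In short: the $t=1$ case in your proposal is fine, but the $t=d$ contradiction is missing, the $t=d-1$ case requires the dual-lattice/recursive mechanism that your fallback cannot replace, and your Cramer construction for~\eqref{eq:8-case1} produces the wrong exponents. You should replace the Cramer step with Corollary~\ref{cor:small-point} $\to$ Lemma~\ref{lem:factorisation-bound} $\to$ Corollary~\ref{cor:good}, and add a genuine duality argument for $t=d-1$.
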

\begin{proof}
Let $\sigma_1,\dots,\sigma_d$ denote the successive minima of $\cL$ with respect to $\cB$. Define  $t \leq d$ to be the largest integer such that $\sigma_t \leq 1$. We proceed on a case-by-case basis depending on the size of $t$.

\textit{Case I: $t=d$.} Lemma~\ref{lem:det_of_modular_lattice} and a simple calculation give 
\begin{align}
\label{eq:vol}
\vol\cB=q^{\ell d(d+1)/2-d}, \quad \det{\cL}=q^{r}.
\end{align}
Then Lemma~\ref{lem:largest} and (\ref{eq:lowerboundassumption}) imply
\begin{align*}
\ell \geq \frac{2r}{d^2-(2\beta-1)d+2},
\end{align*}
contradicting our assumption~\eqref{eq:lfconds}. Thus, we cannot have $t=d$.

\textit{Case II: $t=d-1$.} By Lemma~\ref{lem:Mahler_dual}, this implies that 
\begin{align}
\label{eq:sigma1-star}
\sigma_1^{*}< 1, \quad \sigma_2^{*}\ge 1.
\end{align}
By~\eqref{eq:lowerboundassumption},~\eqref{eq:vol} and  Lemma~\ref{lem:dualcount}
\begin{align}\label{eq:poly_lat_lem_dual_bound}
|\cL^{*}\cap B^{*}|\ge {q^{r +2- \ell(d^2-(2\beta-1)d+2)/2}}.
\end{align}
In particular, by~\eqref{eq:lfconds}, this implies $|\cL^* \cap \cB^*| > 1$.
By Lemma~\ref{lem:dual_modular_eq} and the fact that 
$$B^{*}=\left\{ (x_0,\dots,x_{d-1})\in \F_q(T)_{\infty}^{d} \ : \ |x_j|\le \frac{1}{q^{\ell(d-j)-1}} \right\},$$
we see there exists $u_0,\dots,u_{d-1}\in \F_q[T]$, not all zero, satisfying 
\begin{align*}
u_0s^{j}\equiv u_j \Mod{F},~ \frac{1}{F}\left(u_0,...,u_{d-1} \right) \in \cB^*.
\end{align*}
Also, from~\eqref{eq:sigma1-star}, \eqref{eq:poly_lat_lem_dual_bound} and Lemma \ref{lem:lattice_body_intersection} we can say 
$$\sigma_1^* \leq q^{\ell(d^2-(2\beta-1)d+2 + 2d/\ell)/2-r-2}$$
Thus by definition of successive minima we can additionally say 
$$|u_j|\leq q^{r - \ell(d-j) + 1 -r-2+\ell(d^2-(2\beta-1)d+2 +2d/\ell)/2 +1 } = q^{\ell((d^2-(2\beta+1)d+2)/2+ j) + d}.$$
Keeping in mind (\ref{eq:lfconds}), we can apply Lemma \ref{lem:recursive}, so there exists $a$ and $b$, both non-zero modulo $F$, satisfying $bs\equiv a \Mod{F}$ and
 \begin{align*}
\quad |a|\leq q^{\ell d(d+1-2\beta)/(2d-2) + d/(d-1)}, \quad |b|\leq q^{\ell(d^2-(2\beta+1)d+2)/(2d-2)+d/(d-1)}.
\end{align*}
and this is stronger than~\eqref{eq:8-case2}.

\textit{Case III: $2\le t \leq d-2$}.  Of course this case is only possible if $d \geq 4$, but we will take care of the case $t=1$ later. By Corollary~\ref{cor:small-point} with $\beta=\infty,k=1$ and $\cS=\F_q[T]$, there exists some nonzero $(v_0,\dots,v_{d-1})\in \F_q[T]^{d}$  satisfying 
\begin{align*}
v_0 + v_1s + ... + v_{d-1}s^{d-1} \equiv 0 \Mod{F}, ~|v_j| \le q^{\ell(d-j-(\beta d-1)/(d-2))}.
\end{align*}
If we let $P' \in \F_q[T][x]$ denote the polynomial 
$$P'(x) = v_0 + ... + v_{d-1}x^{d-1} $$
then $P'$ must have an irreducible factor 
$$P(x)=u_0+\dots+u_{v-1}x^{v-1},$$
with $v\le d$ such that 
\begin{align*}
P(s)\equiv 0 \Mod{F}.
\end{align*}
By Lemma \ref{lem:factorisation-bound}, $u_0,\dots,u_{v-1}$ satisfy
\begin{align*}
 |u_j|\le q^{\ell(v-j-(\beta d-1)/(d-2))}.
\end{align*}
Let $x^{(2)},...,x^{(t)}$ denote the vectors corresponding to $\sigma_2,...,$ $\sigma_{t}$ respectively. These are of course linearly independent by definition, and to $x^{(j)}$ we can associate a polynomial 
$$Q_j(x)=u_{j,0}+\dots+u_{j,d-1}x^{d-1}, \quad |u_{j,i}|< q^{\ell(d-i)},$$
such that $Q_{j}(s)\equiv 0 \mod{F}$.
We will apply Corollary~\ref{cor:good} with $a = -(\beta d-1)/(d-2),~b=0$ and $e=d$, since $\deg{P},\deg{Q_1},\dots,\deg{Q_{t}}\le d-1$. 

Note the condition~\eqref{eq:lfconds} implies~\eqref{eq:deabconds} is satisfied. 
Hence by Corollary~\ref{cor:good}, we see that
\begin{align*}
v\le d-(t-1),
\end{align*}
which implies~\eqref{eq:8-case1}.

\textit{Case IV: $t = 1$}. 
By~\eqref{eq:lowerboundassumption} and Lemma \ref{lem:lattice_body_intersection}, 
\begin{align*}
\sigma_1\leq q^{d-\ell(\beta d-1)},
\end{align*}
and \eqref{eq:8-case2} is immediate
from the definition of successive minima. 
\end{proof}
We note that the special case of Lemma~\ref{lem:17} with $\beta=1$ does not produce as strong a result as the direct function field analogue of~\cite[Lemma~15]{BGKS}. This is due to our inability to optimise between cases (I)--(IV) without a specific range of values for $\beta$. It is possible to refine the proof of Lemma~\ref{lem:17} to produce a direct comparison to~\cite[Lemma~17]{BGKS} in the case $\beta=1$.

\section{Points on curves  }

\subsection{Proof of Theorem~\ref{thm:curveinbox_1} }
After applying a change of variables, we may assume that $s_1 = s_2 = 0$. Now we define the set 
$$\mathcal{S} = \{ (x,y) \in \cI_m^2 : \Phi(x,y) \equiv 0 \Mod{F}\},$$
 lattice
$$\cL = \{(y, x_1, ..., x_d) \in \F_q[T]^{d+1} : y - a_1x_1 - ... - a_dx_d \equiv 0 \Mod{F}\} $$
and convex body 
$$\cB = \{(y, x_1, ..., x_d) \in \Ki^{d+1} : |y| \leq q^{m} \text{ and } |x_i| \leq q^{im}\}. $$
Of course, we may suppose that $\cS$ is non-empty. Thus, fix $(x_0, y_0) \in \cS$ and let 
\begin{align*}
\cS_0=\{(y-y_0,x-x_0,x^2-x_0^2,\dots,x^{d}-x_0^d) \ : \ (x,y)\in \cS \},
\end{align*}
and note that 
\begin{align}
\label{eq:curves1}
|\cS|\leq|\cS_0\cap \cL \cap \cB|.
\end{align}
Let $\sigma_1, ..., \sigma_{d+1}$ denote the successive minima of $\cL$ with respect to $\cB$. We consider two cases depending on if $\sigma_{d+1} > 1$ or not.

Case I: $\sigma_{d+1}\le 1$.
Applying~\eqref{eq:curves1} and Lemma~\ref{lem:last} with 
\begin{align*}
\beta=\infty, \quad k=d(d+1),
\end{align*}
gives 
\begin{align*}
|\cS|^{k}\le q^{d+1}\|\cS_0^{(k)}\|_{\infty}\frac{\vol{\cB}}{\det{\cL}}.
\end{align*}
By Lemma \ref{lem:det_of_modular_lattice} $\det \cL = q^r$ and a simple computation yields $\vol \cB = q^{m(d^2+d+2)/2}$. Combined with the above, this implies 
\begin{align}
\label{eq:Ss0Ss0-1}
|\cS|^{k}\le q^{d+1}\|\cS_0^{(k)}\|_{\infty}\frac{q^{m(d^2+d+2)/2}}{q^{r}}.
\end{align}
Let $\pi(\cS)$ denote the projection of $\cS$ onto the first coordinate, so that 
\begin{align*}
|\pi(\cS)|=|\cS|.
\end{align*}
We note that there exists $g_1,...,g_d \in \F_q[T]$ such that $\|\cS_0^{(k)}\|_{\infty} $ is bounded by the number of solutions to the system of equations
$$x_1^j + ... + x_{k}^j = g_j, ~1 \leq j \leq d $$
with $x_1,...,x_{k} \in \pi(\cS)$. By Corollary \ref{cor:wooley} this implies 
$$\|\cS_0^{(k)}\|_{\infty} \leq |\mathcal{S}|^{k/2}q^{o(n)},$$
which combined with~\eqref{eq:Ss0Ss0-1} results in
\begin{align}\label{eq:CB1:S_bound2}
|\mathcal{S}|  \leq (q^{m(d^2+d+2)/2-r + o(m)})^{2/k} = q^{m + 2m/(d^2+d) - 2r/(d^2+d) + o(m)}.
\end{align}

Case II: $\sigma_{d+1}>1$.
By  Lemma \ref{lem:dual_modular_eq}
$$\cL^* = \frac{1}{F}\{(z, g_1, ..., g_{d}) \in \F_q[T]^d: -a_iz \equiv g_i \Mod{F}\} $$
and note that 
$$\cB^* = \{(z, g_1, ..., g_d): |z| \leq q^{-m} \text{ and }|g_i| \leq q^{-im}\}. $$
Let $\sigma_1^*$ denote the first successive minima of $\cL^*$ with respect to $\cB^*$. 
By Lemma \ref{lem:Mahler_dual}
$$\sigma_1^* < 1. $$
This implies that 
$$\cL^* \cap \cB^* \neq \{0\},$$
and hence there exists $z, g_1, ..., g_d \in \F_q[T]$  such that $-a_iz \equiv g_i \Mod{F}$ and 
\begin{align}\label{eq:CB1:z_gi_bounds}
    |z| \leq q^{r-m}, ~|g_i| \leq q^{r-im}.
\end{align}
Note that we have $z \neq 0$ and $\gcd(z, F) = 1$. If $(x,y) \in \mathcal{S}$ then by construction
$$zy \equiv -g_0 - g_1x - ... - g_dx^d \Mod{F} $$
where $|g_0| < q^r$ satisfies $g_0 \equiv -a_0z \Mod{F}$. 
Thus, it suffices to count solutions to 
$$zy + g_0 + g_1x + ... + g_dx^d = tF $$
for $t \in \F_q[T]$. Since $\gcd(a_d, F) = 1$ this implies $g_d \neq 0$. We have by (\ref{eq:CB1:z_gi_bounds}) and $(x,y) \in \cI_m^2$ that 
$$|zy + g_0 + g_1x + ... + g_dx^d| < q^r $$
and thus $t = 0$. Applying Lemma \ref{lem:Bombieri_pila} now implies 
\begin{align}\label{eq:CB1:S_bound1}
    |\mathcal{S}| \leq q^{m/d + o(n)}.  
\end{align}

Combining (\ref{eq:CB1:S_bound1}) and (\ref{eq:CB1:S_bound2}) yields the result.

\subsection{Proof of Theorem~\ref{thm:curveinbox_2}  }
Before presenting the proof of Theorem~\ref{thm:curveinbox_2}, we need a lemma which may be of independent interest. This construction is very standard, and should be considered a function field analogue of the main construction in Coppersmith's Theorem (see \cite{Coppersmith2001}). 
\begin{lem}\label{lem:coppersmith}
Let $Q(x) \in \F_q[T][x]$ of degree $d$ with leading coefficient coprime to $F$. For any $\epsilon > 0$, if 
\begin{align}\label{eq:coppersmith_condition}
    m < r\left(\frac{1}{d}- \epsilon\right)
\end{align}
then number of solutions to 
$$Q(x) \equiv 0 \Mod{F}, ~\deg x < m$$
is $O(1)$, where the implied constant depends on at most $q,d$ and $\epsilon$.
\end{lem}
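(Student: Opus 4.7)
The plan is a function-field version of Coppersmith's small-roots construction: build an $\F_q[T]$-lattice of ``shift'' polynomials, use the Mahler/Minkowski estimate of Lemma~\ref{lem:largest} to extract a short non-zero element $R(x)\in \F_q[T][x]$, argue that $R(x_0)=0$ in $\F_q[T]$ for every solution, and bound the number of solutions by $\deg R$.

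Before setting up the lattice I would first reduce to a monic polynomial. Since $\gcd(a_d,F)=1$, let $\bar a_d\in\F_q[T]$ satisfy $a_d\bar a_d\equiv 1\pmod F$ with $\deg\bar a_d<r$, and replace $Q$ by $\widetilde Q(x):=\bar a_dQ(x)\bmod F$, where reduction is carried out coefficient-wise. Then $\widetilde Q\in\F_q[T][x]$ is monic of degree $d$, all of its coefficients have norm $<|F|$, and $\widetilde Q(x_0)\equiv 0\pmod F$ if and only if $Q(x_0)\equiv 0\pmod F$. This step is the principal obstacle and the reason the claim is delicate: without it, the determinant of the Coppersmith lattice acquires a factor $|a_d|^{dk(k+1)/2}$, which can be as large as $|F|^{dk(k+1)/2}$ and destroys the argument.

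Now fix a positive integer $k=k(d,\epsilon)$ (to be chosen) and set $N=d(k+1)$. Consider the polynomials
$$g_{i,j}(x)\;=\;x^{i}\,F^{k-j}\,\widetilde Q(x)^{j},\qquad 0\le i<d,\ \ 0\le j\le k,$$
of degrees $0,1,\dots,N-1$. Since $\widetilde Q$ is monic, the coefficient vectors of the $g_{i,j}$ (in the basis $1,x,\dots,x^{N-1}$) form a triangular basis of an $\F_q[T]$-lattice $\Lambda\subseteq \F_q[T]^N$ of determinant $|F|^{dk(k+1)/2}$. Rescaling the $\nu$-th coordinate by $T^{m\nu}$ produces $\Lambda'\subseteq \Ki^N$ satisfying
$$\log_q\det\Lambda'\;=\;\tfrac12\, rdk(k+1)\,+\,\tfrac12\, mN(N-1).$$
Applying Lemma~\ref{lem:largest} to the convex body $\cB=\{v\in\Ki^N:\|v\|\le q^{M}\}$ with $M=\lceil(\log_q\det\Lambda')/N\rceil=\tfrac{rk}{2}+\tfrac{m(N-1)}{2}+O(1)$ yields a non-zero $R(x)=\sum_{\nu<N}c_\nu x^\nu$ in $\Lambda$ whose coefficients obey $|c_\nu|\le q^{M-m\nu}$.

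For any $x_0$ with $|x_0|<q^m$ and $Q(x_0)\equiv 0\pmod F$ we have $\widetilde Q(x_0)=Fh$ for some $h\in\F_q[T]$, so each basis polynomial satisfies $g_{i,j}(x_0)=x_0^{\,i}F^{k}h^{j}\equiv 0\pmod{F^{k}}$, and therefore $R(x_0)\equiv 0\pmod{F^{k}}$. The coefficient bounds simultaneously give $|R(x_0)|\le q^{M}$, so once $M<rk$ the divisibility $F^{k}\mid R(x_0)$ in $\F_q[T]$ forces $R(x_0)=0$. The inequality $M<rk$ unwinds to $m<rk/(d(k+1)-1)+O(r^{-1})$, which tends to $r/d$ as $k\to\infty$. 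Thus for given $\epsilon>0$ I would choose $k=k(d,\epsilon)$ large enough that $k/(d(k+1)-1)>1/d-\epsilon/2$; the hypothesis $m<r(1/d-\epsilon)$ then forces $R(x_0)=0$ for every solution once $r$ exceeds a threshold depending only on $d,\epsilon$ (the remaining sub-threshold range being trivial since then $|\cI_m|$ is itself bounded in terms of $q,d,\epsilon$). As $R$ is a non-zero polynomial of degree $<N$, it has at most $N-1=d(k+1)-1=O_{d,\epsilon}(1)$ roots in $\F_q[T]$, bounding the number of solutions and completing the proof.
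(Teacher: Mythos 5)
Your proof is correct and follows essentially the same route as the paper: the reduction to a monic polynomial (which the paper dispatches in a single sentence), the same lattice of shift polynomials $x^i F^{h-j} Q(x)^j$, an application of Lemma~\ref{lem:largest} (Minkowski's first theorem) to produce a short non-zero polynomial $R$, the observation that $F^h \mid R(x_0)$ together with the size bound forces $R(x_0)=0$, and the final degree bound. The only cosmetic difference is that you rescale the lattice and use an isotropic ball whereas the paper keeps the lattice integral and uses an anisotropic convex body $\{|x_i|<q^{rh-im}\}$; also your slop term should be $O(1)$ rather than $O(r^{-1})$, though this does not affect the large-$r$ argument you then make.
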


\begin{proof}
Since the leading coefficient of $Q$ is coprime with $F$ and we are only interested in counting roots of $Q$ modulo $F$, we may assume that $Q(x)$ is monic. Let $h$ be an integer to be specified later, and consider the lattice $\cL \subseteq \F_q[T]^{d(h+1)}$ generated by the matrix whose columns consist of the coefficients of the polynomials 
\begin{align}\label{eq:coppersmith_polynomials}
   F^{h-v}Q(x)^vx^u, ~u \in \{0,...,d-1\}, ~v \in \{0,...,h\}. 
\end{align}
These can be arranged in a matrix to be upper triangular, which makes computing $\det \cL$ simple and we find 
$$\det \cL  = q^{rdh(h+1)/2}. $$
Additionally for any $(y_0,...,y_{d(h+1)-1}) \in \cL$, if we let 
\begin{align}\label{coppersmith_g}
    g(x) = y_0 + y_1x + ... + y_{d(h+1)-1}x^{d(h+1)-1}
\end{align}
then 
\begin{align}\label{eq:coppersmith_modF^h}
    g(x_0) \equiv 0 \Mod{F^h}
\end{align}
for any solution $x_0$ to $Q(x_0) \equiv 0 \Mod{F}$, since $g(x)$ is an $\F_q[T]$ combination of the polynomials \eqref{eq:coppersmith_polynomials}, and $F^h$ divides $F^{h-v}Q(x_0)^v$. 

Next, let 
$$\cB = \{(x_0, ..., x_{d(h+1)-1}) ~:~ |x_i| < q^{rh-im}\}.$$
Another computation yields 
$$\vol \cB = q^{dh^2+dhr-dhm(dh+d-1)/2 - dm(dh+d-1)/2 - d(h+1)}.$$
By Lemma \ref{lem:largest}, we know that $\cL \cap \cB$ will contain a non-zero point if 
$$q^{d(h+1)}\frac{\vol \cB}{\det \cL} > 1.$$
Simplifying all of the details down, this is equivalent to
\begin{align}\label{eq:coppersmith_non-zero}
    m < r\frac{h}{hd + d - 1}.
\end{align}
If we choose any
$$h > \frac{1-d\epsilon}{d\epsilon} $$
then by \eqref{eq:coppersmith_condition}, we have that \eqref{eq:coppersmith_non-zero} is satisfied. 
Thus, let $g(x)$ be as in \eqref{coppersmith_g} for some non-zero $(y_0,..., y_{d(h+1)-1}) \in \cL \cap \cB$. Then as stated in \eqref{eq:coppersmith_modF^h}, $g(x_0) \equiv 0 \Mod{F^h}$ for any solution $x_0$ to $Q(x_0) \equiv 0 \Mod{F}$. The definition of $\cB$ implies that in fact $g(x_0) = 0$, and thus there are at most $qd(h+1)$ such $x_0$. 
\end{proof}
We can now proceed with the proof of Theorem \ref{thm:curveinbox_2}. Suppose we have a solution $(x_0, y_0)$ to the congruence 
$$\Phi(x,y) \equiv 0 \Mod{F},~ (x,y) \in \cI_m(s_1, s_2). $$
Then the change of variables $(x,y) \mapsto (x_0-x, y-y_0)$ yields 
$$y^2-y(2y_0) \equiv x^3(-a_3) + x^2(3a_3x_0 + a_2) + x(-3a_3x_0^2 -2a_2x_0 - a_1) \Mod{F}. $$
We note that $(x_0-x, y-y_0) \in \cI_m^2$. Thus, it suffices to estimate the size of the set 
\begin{align*}
    \cS = \{(x,y) \in \cI_m^2 ~:~y^2+ c_1y \equiv b_3x^3 + b_2x^2 + b_1x \Mod{F}\}
\end{align*}
where $c_1, b_1, b_2, b_3 \in \F_q[T]$ with $\gcd(b_3, F) = 1$. 
We define the lattice,
\begin{align*}
    \cL = \{(x_1, x_2, x_3, &y_1, y_2) \in \F_q[T]^5 : \\
            &b_1x_1 + b_2x_2 + b_3x_3 - c_1y_1 - y_2 \equiv 0 \Mod{F}\}
\end{align*}    
and the convex body 
\begin{align*}
    \cB = \{(x_1, x_2, x_3, &y_1, y_2) \in \Ki^5 : \\
            &|x_1|, |y_1| \leq q^m, ~|x_2|, |y_2| \leq q^{2m}, ~|x_3| \leq q^{3m}\}. 
\end{align*}
Let $\sigma_1, ..., \sigma_5$ denote the successive minima of $\cL$ with respect to $\cB$. We will break this discussion into two cases. 

\textit{Case I: $\sigma_5 \leq 1$.} We let $\pi(\cS)$ denote the projection of $\cS$ onto the first coordinate, so that
$$|\cS| \ll |\pi(\cS)| $$
by Lemma \ref{lem:coppersmith}. 
If we define
$$\cS_0 = \{(x_1+...+x_6, x_1^2+...+x_6^2,x_1^3+...+x_6^3) ~:~x_1,...,x_6 \in \pi(\cS)\}$$
we have 
\begin{align}\label{eq:PC2_j_1}
    \sum_{s \in \cS_0}J_{s,3,3}(\pi(\cS))  = |\pi(\cS)|^6
\end{align}
with $J_{s,3,3}(\pi(\cS))$ as in (\ref{eq:J_system}). We also note that 
$$\sum_{s \in \cS_0}J_{s,3,3}(\pi(\cS))^2 $$
is bounded by the number of solutions to 
$$x_1^j + ... + x_6^j = x_7^j + ... + x_{12}^j, ~1 \leq j \leq 3, ~x_j \in \pi(\cS). $$
Thus by Lemma \ref{lem:wooley_VMVT}, 
\begin{align}\label{eq:PC2_j_2}
    \sum_{s \in \cS_0}J_{s,3,3}(\pi(\cS))^2 \leq q^{o(m)}|\pi(\cS)|^6.
\end{align}
Now applying the Cauchy-Schwarz inequality with (\ref{eq:PC2_j_1}) and (\ref{eq:PC2_j_2}) yields 
\begin{align}\label{eq:PC2_S^6_1}
   |\cS|^6 \leq q^{o(m)}|\cS_0|. 
\end{align}
For any 
$$s = (x_1+...x_6, x_1^1+...+x_6^2, x_1^3+...+x_6^3) \in \cS_0 $$
since each $x_i $ corresponds to at least one $(x_i, y_i) \in \cS$ we have that 
$$(x_1+...x_6, x_1^1+...+x_6^2, x_1^3+...+x_6^3, y_1+...+y_6, y_1^2+...+y_6^2) \in \cL \cap \cB. $$
Of course this point in $\cL \cap \cB$ is uniquely determined by $s$ so by (\ref{eq:PC2_S^6_1}), 
\begin{align*}
   |\cS|^6 \leq q^{o(m)}|\cL \cap \cB|. 
\end{align*}
Lemma \ref{lem:det_of_modular_lattice} gives $\det \cL = q^r$ and it is easy to see $\vol \cB = q^{9m}$. Then since $\sigma_5 \leq 1$, by Lemma \ref{lem:largest} we conclude 
\begin{align*}
   |\cS|^6 \ll q^{9m-r + o(m)}
\end{align*}
and the result follows in this case. 

\textit{Case II: $\sigma_5 > 1$.} Lemma \ref{lem:dual_modular_eq} implies 
\begin{align*}
    \cL^* = \frac{1}{F}\{(g_1, g_2, &g_3, z_1, z_2) \in \F_q[T]^5 :\\
    &-b_iz_2 \equiv g_i \Mod{F}\text{ and } c_1z_2 \equiv z_1 \Mod{F}\}
\end{align*}
and by definition 
\begin{align*}
    \cB^* = \{(g_1, g_2, g_3, &z_1, z_2) \in \Ki^5 : \\
            &|g_1|, |z_1| \leq q^{-m}, ~|g_2|, |z_2| \leq q^{-2m}, ~|g_3| \leq q^{-3m}\}.
\end{align*}
If $\sigma_1^*$ denotes the first successive minima of $\cL$ with respect to $\cB^*$ then since $\sigma_5 >  1$, Lemma \ref{lem:Mahler_dual} implies $\sigma_1^* < 1$. This implies that $\cB^* \cap \cL^* \neq \{0\}$. So there exists non-zero $(g_1, g_2, g_3, z_1, z_2) \in \F_q[T]^5$ satisfying  
\begin{align*}
    |g_1|, |z_1| \leq q^{r-m}, ~|g_2|, |z_2| \leq q^{r-2m}, ~|g_3| \leq q^{r-3m}
\end{align*}
and 
\begin{align*}
    -b_iz_2 \equiv g_i \Mod{F}\text{ and } c_1z_2 \equiv z_1 \Mod{F}\}.
\end{align*}
This implies $z_2 \neq 0$, since otherwise $g_1=g_2=g_3=z_1=z_2=0$. Thus to count $|\cS|$, it suffices to count the number of solutions to 
$$z_2y^2 + z_1y + g_3x^3 + g_2x^2 + g_2x^2 + g_1x = tF $$
for some $t \in \F_q[T]$ and $(x,y) \in \cI_m^2$. The bounds on $g_i$ and $z_i$ imply that $t=0$. Thus applying Lemma \ref{lem:Bombieri_pila},
$$|\cS| \leq q^{m/3+o(m)}. $$

\section{Kloosterman equations }
\subsection{Proof of Theorem~\ref{thm:sums_of_inverses} }
We note that trivially we have $$E_{F,k}^\inv(\cI_m) \leq q^{m(2k-1)}$$ so we may assume $km < r$ as otherwise the result is trivial. 

For any $\lambda \in \F_q[T]$  we denote 
$$I_{F, \lambda, k}(\cI_m) = \left|\{(x_1,...,x_k) \in \cI_m^k : \ov{x_1} + ... + \ov{x_k} \equiv \lambda \Mod{F}\}\right|.  $$
Since $m < r/k$ we can say $I_{F, 0, k}(\cI_m)= 0$.  Thus if we let 
$$\Omega = \{0 \neq \lambda  \in \cI_r : I_{F, \lambda, k}(\cI_m) \geq 1\} $$
then of course 
$$E_{F,k}^\inv(\cI_m) \leq \sum_{\lambda \in \Omega}I_{F, \lambda, k}(\cI_m)^2.$$
We now define the \textit{multiset} 
$$\cS = \{(x_1...x_k, ~x_2...x_k + ... + x_1...x_{k-1}) ~:~ x_i \in \cI_m\setminus \{0\}\}.$$
We recall a {multiset} may have repetition of elements. For each $\lambda \in \Omega $ we define the lattice 
$$\cL_\lambda  = \{(x,y) \in \F_q[T]^2 : \lambda x \equiv y \Mod{F}\} $$
and convex body 
$$\cB = \{(x,y) \in \Ki^2: |x| \leq q^{mk}, |y| \leq q^{m(k-1)}\}. $$
It is clear that $\vol \cB = q^{m(2k-1)}$ and by Lemma \ref{lem:det_of_modular_lattice}, $\det \cL_{\alpha} = q^r$. Also if $\lambda_1 \neq \lambda_2$ then $\cL_{\lambda_1} \cap \cL_{\lambda_2} \cap \cB = \{0\}$ since $km < r$. We let $\sigma_{1, \lambda}, \sigma_{2, \lambda}$ denote the successive minima of $\cL$ with respect to $\cB$. 

If $(x_1,...,x_k)$ is counted by $I_{F, \lambda, k}(\cI_m)$ then 
$$\lambda x_1...x_k \equiv x_2...x_k + ... +
x_1...x_{k-1}\Mod{F} $$
which implies
$$(x_1...x_k, x_2...x_k + ... + x_1...x_{k-1}) \in \cL_\lambda \cap \cB.$$
This indicates that $\sigma_{1, \lambda} \leq 1$ for every $\lambda \in \Omega$, and it also indicates that we may write 
$$E^\inv_{F,k}(\cI_m) \leq \sum_{\substack{\lambda \in \Omega \\ \sigma_{2, \lambda} > 1}}|\cL_\lambda  \cap \cB \cap \cS|^2 + \sum_{\substack{\lambda \in \Omega \\ \sigma_{2, \lambda} \le 1}}|\cL_\lambda  \cap \cB \cap \cS|^2.$$
We recall that when we intersect a multiset with a set, we count repetition. For the first sum, we apply Lemmas \ref{lem:SOI:inverses_equality} 
 and \ref{lem:first} to obtain 
\begin{align*}
    \sum_{\substack{ \sigma_{2, \lambda} > 1}}|\cL &\cap \cB \cap \cS|^2\\
    &\leq  \left|\left\{(x_1, ..., x_{2k}) \in \cI_m^{2k} : \frac{1}{x_1}+...+\frac{1}{x_k} = \frac{1}{x_{k+1}}+...+\frac{1}{x_{2k}}\right\}\right| \\ 
&  \leq q^{km + o(m)}.
\end{align*}
For the second sum, we apply Lemma \ref{lem:first-last} to obtain
\begin{align*}
    \sum_{\substack{\lambda \in \Omega \\ \sigma_{2, \lambda} \le 1}}|\cL_\lambda  \cap \cB \cap \cS|^2
    &\ll q^{m(2k-1)-r}T
\end{align*}
where $T$ is the number of solutions to the system 
\begin{align*}
    x_1...x_k &= y_1...y_k\\
    x_2...x_{k}+...+x_{1}...x_{k-1} &= y_2...y_{k}+...+y_1...y_{k-1}
\end{align*}
with $x_i, y_i \in \cI_m$. This implies by Lemma \ref{lem:SOI:inverses_equality} that
\begin{align*}
    \sum_{\substack{\lambda \in \Omega \\ \sigma_{2, \lambda} \le 1}}|\cL_\lambda  \cap \cB \cap \cS|^2
    &\ll q^{m(3k-1)-r + o(m)}.
\end{align*}

The result follows from combining the above estimates.

\subsection{Proof of Theorem~\ref{thm:sums_of_inverses-general}}

Our argument incoporates Lemma~\ref{lem:17} into the proof of~\cite[Theorem~1.1]{SZ2018} and this is where our improvement comes from. 

Recalling~\eqref{eq:inv-def}, we see that $E^\inv_{F,k}\left( \cI_m(s) \right)$ counts the number of solutions to 
\begin{align}
\label{eq:inv-eqn-1}
\ov{s+x_1} + ... + \ov{s+x_k} \equiv \ov{s+x_{k+1}} + ... + \ov{s+x_{2k}} \Mod{F}, \quad \deg{x_j}\le m.
\end{align}
Let $E^{*}$ count the number of solutions to~\eqref{eq:inv-eqn-1} subject to the extra condition that 
\begin{align}
\label{eq:x-inv-conds}
|\{x_1,\dots,x_{2k}\}|=2k.
\end{align}
That is, solutions such that each of $x_1,...,x_{2k}$ is pairwise distinct.
In the proof of~\cite[Theorem~1.1]{SZ2018}, it is demonstrated that it is sufficient to show 
\begin{align*}
E^{*}\le q^{km+o(m)}.
\end{align*}
Thus we will assume, for the sake of contradiction, that for all sufficiently large $m$ that 
\begin{align}
\label{eq:inv-toshow-cont}
E^{*}\ge q^{km(1+\epsilon)}
\end{align}
for some $\epsilon > 0$. 
 For each tuple $x=(x_1,\dots,x_{2k})$ satisfying~\eqref{eq:inv-eqn-1} and~\eqref{eq:x-inv-conds}, consider the polynomial in $\F_q[T][Z]$
\begin{align}
\label{eq:pxpx}
P_{x}(Z)=\sum_{s=1}^{k}\prod_{j\neq s}(x_j+Z)-\sum_{s=k+1}^{2k}\prod_{j\neq s}(x_j+Z).
\end{align}
Note that by construction
$$P_{x}(s)\equiv 0 \Mod{F}$$
and the assumption~\eqref{eq:x-inv-conds} implies that 
\begin{align*}
P_x(-x_1)=\prod_{j\neq 1}(x_j-x_1)\not \equiv 0 \Mod{F}, 
\end{align*}
hence the polynomial $P_x$ is not constant modulo $F$. For $\deg x< m$ let $E^{*}(x)$ count the number of solutions to~\eqref{eq:inv-eqn-1} with variables satisfying~\eqref{eq:x-inv-conds} subject to the further restriction that 
$$x_1=x.$$
By~\eqref{eq:inv-toshow-cont} and the pigeonhole principle, there exists some $x_0$ with $\deg x_0 < m$ such that 
$$E^{*}(x_0)\ge q^{(k-1)m(1+\epsilon)}.$$
Let $\cL$ denote the lattice 
$$\cL=\{ (y_0,\dots,y_{2k-1})\in \F_q[T]^{2k} \ : \ y_0+\dots+y_{2k-1}s^{2k-1}\equiv 0 \Mod{F}\},$$
and $\cB$ the convex body
$$\cB=\{ (y_0,\dots,y_{2k-1})\in \F_q(T)_{\infty}^{2k} \ : \ |y_j| < q^{(2k-j)m}\}.$$
We next show that~\eqref{eq:inv-toshow-cont} implies 
\begin{align}
\label{eq:LB-inverses}
|\cL\cap \cB|\ge q^{(k-1)m(1+\epsilon)}.
\end{align}
Each point $x=(x_1,\dots,x_{2k})$ counted by $E^{*}(x_0)$ corresponds to a polynomial $P_x$ as in~\eqref{eq:pxpx}. It is clear that the vector formed from the coefficients of $P_x$ belongs to $\cL\cap \cB$. Hence in order to establish~\eqref{eq:LB-inverses} it is sufficient to show for any $Q\in \F_q[T][Z]$
\begin{align*}
|\{ (x_1,\dots,x_{2k})\in \cI_m^{2k} \ :  \ P_x(Z)=Q(Z), \  x_1=x_0\}|\le q^{o(m)}.
\end{align*}
If $(x_1,\dots,x_{2k})$ satisfies the above conditions, then
\begin{align*}
P_x(-x_0)=\prod_{j\neq 1}(x_j-x_0)=Q(-x_0),
\end{align*}
to which Lemma~\ref{lem:divisorbound} implies there are at most $q^{o(m)}$ solutions in variables $x_2,\dots,x_{2k}$, since for each $2\le j \le 2k$, $x_j-x_0$ is a divisor of a fixed $Q(-x_0)$ which satisfies $\deg(Q(-x_0))\ll km$.

Now that we have established ~\eqref{eq:LB-inverses}, by~\eqref{eq:kloostermanpar} and~\eqref{eq:LB-inverses},we may apply Lemma~\ref{lem:17} with $\beta=1/2$. Of course there are two cases to consider. The first case is that there exists some sequence $u_0,\dots,u_{2k-2}$, not all zero modulo $F$, satisfying 
\begin{align*}
u_0+u_1s+\dots+u_{2k-2}s^{2k-2}\equiv ~&0 \Mod{F}, \\
&|u_j|\leq q^{m(2k-1-j-((2k-1)-1)/(2k-3))}.
\end{align*}
Let $Q(Z)$ denote the polynomial
\begin{align*}
Q(Z)=u_0+u_1X+\dots+u_{2k-2}X^{2k-2}.
\end{align*}
For each tuple $x=(x_1,\dots,x_{2k})$ satisfying~\eqref{eq:inv-eqn-1} and~\eqref{eq:x-inv-conds}, the polynomial $P_{x}$ and $Q$ share a common root modulo $F$. Hence 
\begin{align*}
\Res(Q,P_x)\equiv 0 \Mod{F}.
\end{align*}
By~\eqref{eq:kloostermanpar} and applying Lemma ~\ref{lem:resultantbound} to $Q$ and $P_x$, we can conclude 
\begin{align*}
\Res(Q,P_x)=0.
\end{align*}
The second case to consider is when there exists some sequence $u_0,...,u_{2k-1}$, not all zero modulo $F$, satisfying 
\begin{align*}
u_0+u_1s+\dots+u_{2k-2}s^{2k-2}\equiv 0 \mod{F}, \quad |u_j|\leq q^{m(2k-j-(k-1)/2 + 2k/m)}
\end{align*}
and associate to it a polynomial $Q$. Arguing similarly to the above, we can conclude that 
$$\Res(Q, P_x) = 0$$

Regardless of the case, we have a polynomial $Q$ such that $\Res(Q, P_x) = 0$ for any tuple $x = (x_1, ..., x_{2k})$ satisfying~\eqref{eq:inv-eqn-1} and~\eqref{eq:x-inv-conds}. Let $\beta_1,\dots,\beta_{t}$ denote the distinct roots of $Q$ in $\overline \F_q(T)$, of course for some $t \ll k$. It follows that there exists some $1\le j \le t$ such that 
$$E^{*}\ll |\{ (x_1,...,x_{2k}) \in \cI_m^2 \ : \ P_x(\beta_j)=0 \}|.$$
Note that if $P_x(\beta_j)=0$ then 
\begin{align*}
\frac{1}{x_1+\beta_j}+\dots-\frac{1}{x_{2k}+\beta_j}=0.
\end{align*}
It follows from Lemma~\ref{lem:SOI:inverses_equality} that 
\begin{align*}
E^{*}\leq q^{km+o(m)},
\end{align*}
which completes the proof. 

\section{Modular roots }
We will introduce a lemma and some notation that will be useful for proving each of Theorems \ref{thm:energy_bound}, \ref{thm:energy_1m_improved} and \ref{thm:T4_bound}. For any $\lambda \in \cI_r\setminus\{0\}$ we define 
$$Q_{F, \lambda}(\balpha) = \sum_{\substack{(u,v) \in \cI_r^2 \\ u-v \equiv \lambda (F)}} \alpha({u^2})\ov{\alpha({v^2})}.$$
For shorthand we will right $Q_{F, \lambda, m} = Q_{F, \lambda}(\bm{1}_m)$, and of course we have 
\begin{align}\label{eq:Q_1m}
    Q_{F, \lambda, m} = |\{(u,v) \in \cI_r^2:~ \deg_F(u^2),&\deg_F(v^2) < m,\\
    &u-v \equiv \lambda\Mod{F}\}| \nonumber . 
\end{align}
We will also define 
\begin{align*}
    J_{F, \lambda, m} = \left|\{(x,y) \in \cI_m^2: x^2 + \lambda^4 \equiv 2\lambda^2y \Mod{F}\}\right|.
\end{align*}
\begin{lemma}
\label{lem:modular-prelim} 
With notation as above
\begin{align*}
\begin{split}
    Q_{F, \lambda, m} \ll J_{F,\lambda,m}. 
\end{split}
\end{align*}
\end{lemma}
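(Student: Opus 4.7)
The plan is to exhibit an injection from the set counted by $Q_{F,\lambda,m}$ into the set counted by $J_{F,\lambda,m}$. Given a pair $(u,v)\in \cI_r^2$ contributing to $Q_{F,\lambda,m}$, I let $A,B\in\cI_r$ be the canonical reductions of $u^2,v^2$ modulo $F$; the hypotheses $\deg_F(u^2),\deg_F(v^2)<m$ translate to $\deg A,\deg B<m$. I then define $x=A-B$ and $y=A+B$. Since both have degree strictly less than $m$, the pair $(x,y)$ lies in $\cI_m^2$.

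Next I verify that $(x,y)$ satisfies the defining congruence of $J_{F,\lambda,m}$. From the factorisation $u^2-v^2=(u-v)(u+v)$ together with $u-v\equiv\lambda\Mod{F}$, one finds $\lambda(u+v)\equiv A-B=x\Mod{F}$. Assuming $\gcd(\lambda,F)=1$ (automatic in the applications, where $F$ is irreducible and $\lambda\in\cI_r\setminus\{0\}$) and $q$ odd, combining this with $u-v\equiv\lambda$ and inverting $\lambda$ and $2$ gives $u\equiv (x+\lambda^2)/(2\lambda)\Mod{F}$. Squaring this identity and substituting $u^2\equiv A=(x+y)/2$ yields, after a brief algebraic rearrangement, the relation
\[
x^2+\lambda^4\equiv 2\lambda^2 y\Mod{F},
\]
which is precisely the equation defining $J_{F,\lambda,m}$.

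Finally I check injectivity of the map $(u,v)\mapsto(x,y)$. Given such a pair $(x,y)$ in its image, one recovers $A=(x+y)/2$ and $B=(y-x)/2$ uniquely; then $u\equiv(x+\lambda^2)/(2\lambda)\Mod{F}$ is forced in $\cI_r$, and $v\equiv u-\lambda$ is determined in turn. Thus the map is one-to-one, yielding $Q_{F,\lambda,m}\le J_{F,\lambda,m}$. The only delicate point is the invertibility of $\lambda$ modulo $F$, which is the mild subtlety to flag if one wants the lemma under weaker hypotheses on $F$; beyond this the argument is just the substitution, a short algebraic identity, and a uniqueness check, so I foresee no substantive obstacle.
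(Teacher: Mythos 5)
Your proof is correct and takes essentially the same route as the paper: both send $(u,v)$ to the pair formed from the canonical reductions of $u^2-v^2$ and $u^2+v^2$ modulo $F$, and derive the congruence $x^2+\lambda^4\equiv 2\lambda^2 y\Mod{F}$. The only cosmetic differences are that you invert $\lambda$ to solve linearly for $u$ (whereas the paper squares the relation $u^2-v^2-\lambda^2\equiv 2\lambda v$, thereby avoiding an explicit inversion of $\lambda$), and you spell out the injectivity check, which the paper leaves implicit in the $\ll$.
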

\begin{proof}
Suppose $\deg_F(u^2),\deg_F(v^2) < m$ satisfy $u-v \equiv \lambda \Mod{F}$. This implies 
$$u^2-v^2-\lambda^2 \equiv \lambda(u+v)-\lambda^2 \equiv \lambda(\lambda + 2v) - \lambda^2 \equiv 2\lambda v \Mod{F}. $$
Squaring
yields 
$$((u^2-v^2)-j\lambda^2)^2 \equiv 4\lambda^2 v^2 \Mod{F}. $$
Now we make the substitution 
$$u^2-v^2 \to x, ~v^2 \to z $$
to obtain 
\begin{align*}
    Q_{F, \lambda, m} \ll |\{&(x,z) \in \cI_m^2 : (x-\lambda^2)^2 \equiv 4\lambda^2 z \Mod{F}\}|.
\end{align*}
Now
$$(x-\lambda^2)^2 \equiv 4\lambda^2z \Mod{F} $$
implies, after the change of variables $2x + z \to y$, 
\begin{align*}
   x^2 + \lambda^4 \equiv 2\lambda^2y \Mod{F}  
\end{align*}
from which the desired result follows.
\end{proof}

We also define the lattice and convex body 
\begin{align}\label{eq:modular_roots_lattice_body}
\begin{split}
  \cL_\lambda &= \{(x,y) \in \F_{q}[T]^2 : x \equiv 2\lambda^2y \Mod{F}\},  \\
    \cB &= \{(x,y) \in \Ki^2 : |x| < q^{2m}, |y| < q^{m}\}.  
\end{split}
\end{align}
For a given $x \in \cI_r$, there is at most one $y \in \cI_m$ such that 
\begin{align}\label{eq:J_equation}
    x^2 + \lambda^4 \equiv 2\lambda^2y \Mod{F}.
\end{align}
Furthermore, for any two pairs $(x_1, y_1), (x_2, y_2)$ satisfying (\ref{eq:J_equation}) we have 
$$(x_1-x_2)(x_1+x_2) \equiv 2\lambda^2(y_1-y_2) \Mod{F}. $$
Thus applying Lemma \ref{lem:divisorbound} yields 
\begin{align}\label{eq:J_lattice_intersection}
    J_{F, \lambda, m}^2 \leq q^{o(m)}|\cL_\lambda \cap \cB| 
\end{align}
which will be used throughout this section.

We also need one more preliminary result for this section.
\begin{lem}\label{lem:Q_two_cases}
For any integer $m \leq r$ we have that for any $\lambda \in \cI_r\setminus\{0\}$ either 
\begin{align}
\label{eq:Q-two-cases}
Q_{F, \lambda, m} \leq q^{o(m)}\max\{q^{3m/2 - r/2}, 1\}, 
\end{align}
or there exists $a,b \in \F_q[T]$, coprime to $F$, with 
\begin{align*}
|a|  < \frac{q^{2m + o(m)}}{Q^2_{F, \lambda,m}}, ~|b| < \frac{q^{m+o(m)}}{Q^2_{F, \lambda,m}} 
\end{align*}
satisfying 
$$a\ov{b} \equiv 2\lambda^2 \Mod{F}. $$
\end{lem}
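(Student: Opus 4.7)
The plan is to combine Lemma~\ref{lem:modular-prelim} with the inequality~\eqref{eq:J_lattice_intersection} to pass from $Q_{F,\lambda,m}$ to lattice point counting, and then analyse the successive minima of $\cL_\lambda$ with respect to $\cB$ as defined in~\eqref{eq:modular_roots_lattice_body}. These two ingredients give
$$Q_{F,\lambda,m}^2 \le q^{o(m)}|\cL_\lambda\cap\cB|,$$
and by Lemma~\ref{lem:det_of_modular_lattice} together with a direct computation, $\det\cL_\lambda=q^r$ while $\vol\cB\asymp q^{3m}$. Writing $\sigma_1,\sigma_2$ for the successive minima of $\cL_\lambda$ with respect to $\cB$, the argument splits on whether $\sigma_2\le 1$ or $\sigma_2>1$.

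When $\sigma_2\le 1$, Lemma~\ref{lem:largest} produces $|\cL_\lambda\cap\cB|\ll q^{3m-r+O(1)}$; substituting and taking square roots yields $Q_{F,\lambda,m}\le q^{3m/2-r/2+o(m)}$, which together with the trivial bound $Q_{F,\lambda,m}\le q^{o(m)}$ (obtained when $\cL_\lambda\cap\cB=\{0\}$) accounts for the first alternative in~\eqref{eq:Q-two-cases}.

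When $\sigma_2>1$, Lemma~\ref{lem:lattice_body_intersection} forces $|\cL_\lambda\cap\cB|=q/\sigma_1$, and combining Corollary~\ref{cor:Mahler_general} with Lemma~\ref{lem:Mahler_dual} gives $\sigma_1^{*}=1/\sigma_2\asymp q^{3m-r}/|\cL_\lambda\cap\cB|$. By Lemma~\ref{lem:dual_modular_eq}, any non-zero vector of $\cL_\lambda^{*}$ attaining this minimum has the form $(x/F,y/F)$ with $y\equiv -2\lambda^2 x\pmod F$, and unwinding the definition of $\cB^{*}$ converts $\sigma_1^{*}$ into the coordinate bounds
$$|x|\le q^{m+o(m)}/|\cL_\lambda\cap\cB|,\qquad |y|\le q^{2m+o(m)}/|\cL_\lambda\cap\cB|.$$
Substituting $|\cL_\lambda\cap\cB|\ge q^{-o(m)}Q_{F,\lambda,m}^2$ from the preliminary step and setting $a=-y$, $b=x$ (so that $a\equiv 2\lambda^2 b\pmod F$) then yields the desired size bounds on $|a|,|b|$ and the congruence $a\overline{b}\equiv 2\lambda^2\pmod F$.

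The main obstacle will be ensuring $\gcd(a,F)=\gcd(b,F)=1$ so that $\overline{b}$ is well-defined. Under the assumption $q$ odd with $\lambda$ coprime to $F$ (the regime relevant for the subsequent applications), the congruence $a\equiv 2\lambda^2 b\pmod F$ forces $\gcd(a,F)$ and $\gcd(b,F)$ to share the same prime factors of $F$. Coprimality can then be arranged by passing to a primitive representative of the minimising dual vector within $\cL_\lambda^{*}\cap \sigma_1^{*}\cB^{*}$, which preserves the coordinate bounds; when $F$ is irreducible this step is automatic since $|a|,|b|<q^r$.
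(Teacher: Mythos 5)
Your proposal is correct and reaches the same conclusion, but when $\sigma_{2,\lambda}>1$ you pass to the dual lattice $\cL_\lambda^*$ and dual body $\cB^*$, invoking Lemma~\ref{lem:Mahler_dual} and Lemma~\ref{lem:dual_modular_eq} to read off a congruence from the coordinates of a short dual vector. The paper's proof takes a more direct route: since $\sigma_{1,\lambda}\le 1<\sigma_{2,\lambda}$, Lemma~\ref{lem:lattice_body_intersection} gives $\sigma_{1,\lambda}=q/|\cL_\lambda\cap\cB|$, and the \emph{primal} vector $(a,b)=x^{(1)}\in\cL_\lambda$ achieving this minimum automatically satisfies $a\equiv 2\lambda^2 b\Mod{F}$ by the very definition of $\cL_\lambda$ --- no dualisation needed, and the coordinate bounds follow from $x^{(1)}\in\sigma_{1,\lambda}\cB$ with $\sigma_{1,\lambda}\le q^{1+o(m)}/Q^2_{F,\lambda,m}$. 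The two computations are equivalent via Mahler's $\sigma_i\sigma_{d-i+1}^*=1$, so your numerology checks out (including the sign flip $a=-y$, $b=x$), but the primal argument is shorter and avoids the hypothesis of Lemma~\ref{lem:dual_modular_eq} that all coefficients be coprime to $F$. On the coprimality of $b$: you are right to flag this as the delicate point, and your treatment is actually more careful than the paper's one-line remark; as you observe, when $F$ is irreducible (as in every application) and the first alternative fails, the explicit $o(m)$ factors can be arranged so $0\ne b$ with $|b|<q^r$, giving $\gcd(b,F)=1$ immediately, and then $\gcd(a,F)=1$ follows from the congruence since $q$ is odd and $\lambda$ is a nonzero residue mod the irreducible $F$.
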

\begin{proof}
We let $\cL_\lambda$ and $\cB$ be as in (\ref{eq:modular_roots_lattice_body}). By Lemma~\ref{lem:modular-prelim} and (\ref{eq:J_lattice_intersection}) we have 
$$Q_{F, \lambda, m} \ll J_{F, \lambda, m} \leq q^{o(m)}|\cL_\lambda \cap \cB|^{1/2}.$$ 

We let $\sigma_{1, \lambda}$ and $\sigma_{2, \lambda}$ denote the successive minima of $\cL$ with respect to $\cB$. If $\sigma_{1, \lambda} >1$ then $|\cL_\lambda \cap \cB| = 1$ so $J_{F, \lambda, m} \leq q^{o(m)}$ which is accounted for in (\ref{eq:Q-two-cases}). Thus, we now assume $\sigma_{1, \lambda} \leq 1$. 

If $\sigma_{2, \lambda} \leq 1$ then Corollary \ref{cor:Mahler_general} and Lemma  \ref{lem:lattice_body_intersection} imply 
$$|\cL_{\lambda} \cap \cB | \ll q^{3m-r} $$
which implies (\ref{eq:Q-two-cases}). Finally if $\sigma_{2, \lambda}> 1$ then Lemma~\ref{lem:last} with $k=1,\beta=\infty$ and $\cS = \F_q[T]^d$ implies there exists non-zero $(a,b) \in \cL$ such that 
$$|a| < \frac{q^{2m + o(m)}}{Q^2_{F,\lambda,m}}, ~|b| < \frac{q^{m + o(m)}}{Q^2_{F,\lambda,m}}.$$
Since $m \leq r$ and we may assume that \eqref{eq:Q-two-cases} does not hold, this implies that $b$ is invertible modulo $F$, and this concludes the proof. 
\end{proof}

\subsection{Proof of Theorem~\ref{thm:energy_bound} } 

Before completing the proof of Theorem \ref{thm:energy_bound}, we need the following fourth moment estimate for $Q_{F, \lambda, m}$.

\begin{lem}\label{lem:Q_fourth_moment}
For any $m \leq r$ we have 
$$\sum_{\lambda \in \cI_r\setminus\{0\}}Q_{F,\lambda,m}^4 \leq q^{13m/2 - 3r/2 + o(m)} + q^{3m + o(m)}. $$
\end{lem}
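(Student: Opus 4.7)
The plan is to prove the bound by dyadic decomposition on the size of $Q_{F,\lambda,m}$, combined with Lemma~\ref{lem:Q_two_cases} which provides a dichotomy: either $Q_{F,\lambda,m}$ is small, or $\lambda^2$ is congruent to a ratio of small polynomials. For each dyadic level $\Delta$ (with $Q_{F,\lambda,m}\asymp\Delta$), we will either absorb a single large factor using the trivial first moment, or bound the cardinality of the level set using the structural alternative.

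First I would establish the crude first moment bound
\begin{align*}
\sum_{\lambda\in\cI_r\setminus\{0\}} Q_{F,\lambda,m} \le q^{2m+o(m)},
\end{align*}
which follows by noting that the sum counts pairs $(u,v)\in\cI_r^2$ with $\deg_F u^2,\deg_F v^2<m$, of which there are at most $(2q^m)^2$ since the map $u\mapsto u^2$ modulo the irreducible $F$ is at most $2$-to-$1$. Next, apply Lemma~\ref{lem:Q_two_cases} to split the sum into two ranges. In the \emph{small} range, where $Q_{F,\lambda,m}\le q^{3m/2-r/2+o(m)}$ (or $\le q^{o(m)}$ when $m<r/3$), bound
\begin{align*}
\sum_{\lambda\text{ small}} Q_{F,\lambda,m}^4 \le \Bigl(\max Q_{F,\lambda,m}\Bigr)^3 \sum_\lambda Q_{F,\lambda,m} \le q^{13m/2-3r/2+o(m)} + q^{2m+o(m)}.
\end{align*}

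For the \emph{large} range, perform a dyadic decomposition: set $\Lambda_k=\{\lambda:q^k\le Q_{F,\lambda,m}<q^{k+1}\}$ and work with the $O(m)$ levels with $k$ exceeding the small-range threshold. For each such $\lambda$, the lemma produces $a,b\in\F_q[T]$ coprime to $F$ with $|a|<q^{2m+o(m)}/q^{2k}$, $|b|<q^{m+o(m)}/q^{2k}$ and $a\overline{b}\equiv 2\lambda^2\pmod{F}$. Since $q$ is odd and $F$ is irreducible, each value of $2\lambda^2\bmod F$ determines $\lambda$ up to two choices, so
\begin{align*}
|\Lambda_k|\ll \frac{q^{2m+o(m)}}{q^{2k}}\cdot\frac{q^{m+o(m)}}{q^{2k}} = \frac{q^{3m+o(m)}}{q^{4k}}.
\end{align*}
Consequently $\sum_{\lambda\in\Lambda_k}Q_{F,\lambda,m}^4\ll q^{4(k+1)}|\Lambda_k|\ll q^{3m+o(m)}$, and summing over the $O(m)=q^{o(m)}$ dyadic levels yields $q^{3m+o(m)}$. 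Adding this to the small-range contribution gives the desired bound after absorbing $q^{2m+o(m)}$ into $q^{3m+o(m)}$.

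The main subtlety, rather than a genuine obstacle, is ensuring that the two cases of Lemma~\ref{lem:Q_two_cases} cover every $\lambda$ consistently across all dyadic levels — in particular, verifying that whenever the structural alternative is geometrically vacuous (i.e.\ when $q^{2m+o(m)}/Q^2<1$ or $q^{m+o(m)}/Q^2<1$), the small-$Q$ alternative automatically applies, so that no level is double-counted and no $\lambda$ escapes either case. Once this bookkeeping is in place, the two contributions combine cleanly into $q^{13m/2-3r/2+o(m)}+q^{3m+o(m)}$.
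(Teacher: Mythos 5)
Your proof is correct and follows essentially the same strategy as the paper: dyadic decomposition of $\lambda$ by the size of $Q_{F,\lambda,m}$, combined with the dichotomy of Lemma~\ref{lem:Q_two_cases}, using the first-moment bound $\sum_\lambda Q_{F,\lambda,m}\ll q^{2m}$ to handle the small range and counting $(a,b)$ pairs (each determining $\lambda$ up to two choices since $F$ is irreducible and $q$ is odd) to bound the level sets in the large range. The only cosmetic difference is that the paper selects a single maximizing dyadic level $\Delta$ at the outset and then applies the dichotomy to that level, while you first split by the threshold and then dyadically decompose the large range — the two bookkeepings are interchangeable.
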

\begin{proof}
Firstly we apply the dyadic pigeonhole principle. For each non-negative integer $\nu$ we define 
$$\Gamma_\nu = \{\lambda \in \cI_r\setminus\{0\}: 2^\nu \leq Q_{F, \lambda, m} \leq 2^{\nu +1}-1\}. $$
We may ignore the case $Q_{F, \lambda, m} = 0$ as this contributes nothing. Note there are at most $q^{o(m)}$ such $\nu$ for which $|\Gamma_\nu| \neq 0$, since we trivially have $Q_{F, \lambda, m} \ll q^{2m}$. For a given $\nu$ we have 
$$\sum_{\lambda \in \Gamma_\nu}Q_{F,\lambda,m}^4 \ll 2^{4\nu}|\Gamma_\nu|.$$
If we let $\nu_0$ be the integer for which this sum is maximized and set $\Delta = 2^{\nu_0}$, then we obtain 
\begin{align}\label{eq:Q_fourth_moment_bound1}
    \sum_{\lambda \in \cI_r\setminus\{0\}}Q_{F,\lambda,m}^4 \leq q^{o(r)}\Delta^{4}|\Gamma|
\end{align}
where 
$$\Gamma = \{\lambda \in \cI_r\setminus\{0\}: \Delta \leq Q_{F, \lambda, m} < 2\Delta\}. $$
We also trivially have 
$$\Delta |\Gamma| \ll \sum_{\lambda \in \Gamma}Q_{F, \lambda, m} \ll q^{2m}. $$

We now consider the two cases 
$$\Delta \leq q^{c(m)}\max\{q^{3m/2-r/2}, 1\}  $$
or 
$$\Delta >q^{c(m)}\max\{q^{3m/2-r/2}, 1\}  $$
for some sufficient function $c(m) = o(m)$. 

In the first case we obtain 
\begin{align}
\begin{split}\label{eq:Q_fourth_moment_case1}
        \Delta^4|\Gamma|
    &\leq q^{o(m)}(q^{9m/2-3r/2}+1)\Delta|\Gamma|  \\
    &\leq q^{o(m)}(q^{13m/2-3r/2}+q^{2m}).
\end{split}
\end{align}

In the second case, by Lemma \ref{lem:Q_two_cases} for each $\lambda \in \Gamma$ there exists $a,b \in \cI_r\setminus\{0\}$ such that 
$$|a| < \frac{q^{2m + o(m)}}{\Delta^2}, ~|b| < \frac{q^{m + o(m)}}{\Delta^2} $$
satisfying $2\lambda^2b \equiv a\Mod{F}$. If $a$ and $b$ are fixed then $\lambda$ can take at most two values. Thus 
$$|\Gamma| < \frac{q^{2m + o(m)}}{\Delta^2}\frac{q^{m + o(m)}}{\Delta^2} = \frac{q^{3m + o(m)}}{\Delta^4} $$
so we can say
\begin{align}\label{eq:Q_fourth_moment_case2}
    \Delta^4 |\Gamma| < q^{3m+o(m)}.
\end{align}

Together (\ref{eq:Q_fourth_moment_bound1}), (\ref{eq:Q_fourth_moment_case2}) and  (\ref{eq:Q_fourth_moment_case1}) yield 
\begin{align*}
    \sum_{\lambda \in \cI_r\setminus\{0\}}Q_{F,\lambda,m}^4 \leq q^{13m/2-3r/2 + o(m)} + q^{3m + o(m)}
\end{align*} 
as desired. 
\end{proof}

We can now complete the proof of Theorem \ref{thm:energy_1m_improved}. We will make use of a few expressions derived in \cite{BS2022}. By \cite[equation (3.3)]{BS2022} we have 

\begin{align}\label{eq:BS2022_3.3}
    \sum_{\lambda \in \cI_r}|Q_{F, \lambda}(\balpha)| \ll \|\balpha\|_1^2.
\end{align}

The Holder inequality also gives 
\begin{align}\label{eq:Q_sum_holder}
    \sum_{\lambda \in \cI_r\setminus\{0\}}|Q_{F, \lambda}(\balpha)|^2 \leq \bigg{(}\sum_{\lambda \in \cI_r}|Q_{F, \lambda}(\balpha)| \bigg{)}^{2/3} \bigg{(}\sum_{\lambda \in \cI_r}|Q_{F, \lambda}(\balpha)|^4 \bigg{)}^{1/3}. 
\end{align}
We also note the trivial inequality 
$$Q_{F, \lambda}(\balpha) \ll \|\balpha\|_\infty^2Q_{F, \lambda, m}. $$
Therefore Lemma \ref{lem:Q_fourth_moment} implies 
$$\sum_{\lambda \in \cI_r\setminus\{0\}}|Q_{F, \lambda}(\balpha)|^4 \leq \|\balpha\|_\infty^8q^{o(m)}(q^{13m/2-3r/2 } + q^{3m}). $$
Thus using this and (\ref{eq:BS2022_3.3}) in (\ref{eq:Q_sum_holder}) yields 
$$ \sum_{\lambda \in \cI_r\setminus\{0\}}|Q_{F, \lambda}(\balpha)|^2 \leq \|\balpha\|_1^{4/3}\|\balpha\|_\infty^{8/3}q^{o(m)}(q^{13m/6 - r/2} + q^m). $$

Substituting this into 
$$E_{F, 2}^{\sqrt}(\balpha) = \sum_{\lambda \in \cI_r\setminus\{0\}}|Q_{F, \lambda}(\balpha)|^2 + O\left(\|\balpha\|_2^4\right) $$
from \cite[page 10]{BS2022}
completes the proof, after noting that H{\"o}lder's inequality implies
 $$\|\balpha\|_2^4 \ll \|\balpha\|_\infty^{8/3}\|\balpha\|_1^{4/3}q^{2m/3}. $$

\subsection{Proof of Theorem~\ref{thm:energy_1m_improved}  } 
Firstly we note that 
$$E_{F, 2}^{\sqrt}(\bm{1}_m) = \sum_{\lambda \in \cI_r}Q_{F, \lambda, m}^2 $$
where we recall from (\ref{eq:Q_1m}) $$Q_{F, \lambda, m}^2 = \big{|}\{(u,v) \in \cI_r^2 : \deg_F(u^2), \deg_F(v^2) < m, u-v \equiv \lambda \Mod{F}\}\big{|}.$$
If $\lambda = 0$ then $Q_{F, \lambda, m}^2 \ll q^{2m}$, so now by Lemma~\ref{lem:modular-prelim}
$$E_{F, 2}^{\sqrt}(\bm{1}_m) \ll \sum_{\lambda \in \cI_r\setminus\{0\}}J_{F, \lambda, m}^2 + q^{2m}$$
where we recall
\begin{align*}
        J_{F, \lambda, m} = \big{|}\{(x,y) \in \cI_r^2: x^2 + \lambda^4 \equiv 2\lambda^2y \Mod{F}\}\big{|}. 
\end{align*}

We let $\cL_\lambda$ and $\cB$ be as in (\ref{eq:modular_roots_lattice_body}). Let $\sigma_{1,\lambda}$ and $\sigma_{2,\lambda}$ denote the successive minima of $\cL_\lambda$ with respect to $\cB$. We now partition summation as
$$E_{F, 2}^{\sqrt}(\bm{1}_m) \ll S_0 + S_1 + S_2  + q^{2m}$$
where 
$$S_0 = \sum_{\substack{\lambda \in \cI_r\setminus\{0\} \\ \sigma_{1, \lambda} > 1}}J_{F, \lambda, m}^2, ~~S_1 = \sum_{\substack{\lambda \in \cI_r\setminus\{0\} \\ \sigma_{1,\lambda} \leq 1  \\ \sigma_{2,\lambda} > 1}}J_{F, \lambda, m}^2,~~S_2 = \sum_{\substack{\lambda \in \cI_r\setminus\{0\} \\ \sigma_{2,\lambda} \leq 1}}J_{F, \lambda, m}^2.$$
We consider each sum separately.

For $S_0$, we have $\sigma_{1_\lambda} > 1$. Note that by Lemma \ref{lem:lattice_body_intersection} this implies $\cL\cap \cB = \{(0,0)\}$. Thus by (\ref{eq:J_lattice_intersection}), $J_{F, \lambda, m} \leq q^{o(m)} $ so this implies  
$$S_0 \leq q^{o(m)}\sum_{\substack{\lambda \in \cI_r }}J_{F, \lambda, m} \leq q^{2m + o(m)}. $$

To deal with $S_2$, if $\sigma_{2,\lambda} \leq 1$ then Corollary \ref{cor:Mahler_general}, Lemma \ref{lem:lattice_body_intersection} and Lemma \ref{lem:det_of_modular_lattice} imply 
$$|\cL_\lambda \cap \cB| \ll q^{3m-r}. $$
Thus again applying (\ref{eq:J_lattice_intersection}),
$$J_{F, \lambda, m} \leq q^{3m/2-r/2+o(m)} $$
which implies 
$$S_2 \leq  q^{3m/2-r/2+o(m)}\sum_{\substack{\lambda \in \cI_r }}J_{F, \lambda, m} \leq q^{7m/2 - r/2 + o(m)}.$$

We finally consider $S_1$, where we assume $\lambda$ satisfies $\sigma_{1,\lambda} \leq 1$ and $\sigma_{2,\lambda} > 1$. Of course for any such $\lambda$, we can assume $J_{F, \lambda, m} > 0$ otherwise it contributes nothing to $S_1$. So let $(x_\lambda, y_\lambda)$ satisfy (\ref{eq:J_equation}) with $|x_\lambda|,|y_\lambda| < q^m$. Then we would have that 
$$J_{F, \lambda, m} \ll |\{(x,y) \in \F_q[T]^2:  (x^2-x_\lambda^2, y-y_\lambda) \in \cL_\lambda \cap 
\cB\}|. $$
Let $x^{(1)}_\lambda$ be the vector corresponding to $\sigma_{1,\lambda}$. Since $\sigma_{1,\lambda} \leq 1$ we know $x_\lambda^{(1)} \in \cB$, so if $x^{(1)}_\lambda = (a_\lambda, b_\lambda)$ then $a_\lambda, b_\lambda \neq 0$ and $\gcd(a_{\lambda}, b_{\lambda}) = 1$. 
But further, since $\sigma_{1,\lambda} \leq 1$ and $\sigma_{2,\lambda} > 1$, this implies that all points in the intersection $\cL_\lambda \cap \cB$ must be an $\F_q[T]$-multiple of $x_\lambda^{(1)}$ and thus 
$$J_{F, \lambda, m} \ll K_{\lambda, m} + 1 $$
where 
$$K_{\lambda, m} = |\{(x,y) \in \cI_m^2: y \neq y_{\lambda},~ \frac{x^2-x_\lambda^2}{y-y_\lambda} = \frac{a_\lambda}{b_\lambda}\}|.$$
With this in mind we write
\begin{align}\label{eq:S1_intermediate_JandK}
    S_1
    &\ll \sum_{\lambda \in \cI_r\setminus\{0\}}J_{F, \lambda, m}(K_{\lambda, m} + 1) \leq\sum_{\lambda \in \cI_r\setminus\{0\}}J_{F, \lambda, m}K_{\lambda, m} + q^{2m}  .
\end{align}
For a given $\lambda$ suppose we have
$$\frac{x^2-x_{\lambda}^2}{y-y_{\lambda}}=\frac{a_\lambda}{b_\lambda}. $$
We consider when $x$ is fixed, and then when $y$ is fixed. 

If $y$ is fixed, then $x$ is defined in at most two ways. Since $\gcd(a_\lambda,b_\lambda)=1$ we have 
$$y - y_\lambda \equiv 0 \Mod{b_\lambda} $$
so there are at most $q^{m-\deg b_\lambda}$ possibilities for $y$. Thus  
\begin{align}\label{eq:b_lambda_bound}
    K^2_{\lambda, m} \ll q^{2m - 2\deg b_\lambda}.
\end{align}
If $x$ is fixed, $y$ is uniquely defined. Since $\gcd(a_\lambda,b_\lambda)=1$ we have 
$$x^2 - x_{\lambda}^2 \equiv 0 \mod{a_\lambda} $$
which we write as 
$$(x - x_{\lambda})(x+x_{\lambda}) \equiv 0 \mod{a_\lambda} $$
So there are $a_1,a_2 \in \cI_m$ satisfying 
$$a_1a_2 = a_\lambda $$
such that 
$$x \equiv x_{\lambda} \Mod{a_1},~x \equiv -x_{\lambda}\Mod{a_2}. $$
Therefore if $N$ denotes the number of possibilities for $x$ then 
$$N \ll q^{m- \deg\text{lcm}(a_1,a_2)} + 1.$$
Thus 
\begin{align*}
    K_{\lambda, m} 
    &\ll  \sum_{a_1,a_2 = a}\big{(}q^{m + \deg\gcd(a_1,a_2) - \deg a_\lambda} + 1 \big{)} \\
    &\ll q^{o(m)} + \sum_{a_1,a_2 = a_\lambda}q^{m + \deg\gcd(a_1,a_2) - \deg a_\lambda} 
\end{align*}
where we have again used Lemma \ref{lem:divisorbound}. Therefore if we let $L$ denote the set of all $\lambda \in \cI_r\setminus\{0\}$ such that $\sigma_{1,\lambda} \leq 1$ and $\sigma_{2,\lambda} > 1$, then there exists an absolute constant $c$ and a fixed function $C(m) = o(m)$ such that $L = L_1 \cup L_2$ where 
$$L_1 = \{\lambda \in \cI_r\setminus\{0\} : K_{\lambda, m} \leq c \sum_{a_1,a_2 = a_\lambda}q^{m + \deg\gcd(a_1,a_2) - \deg a_\lambda}\}$$
and
$$L_2 = \{\lambda \in \cI_r\setminus\{0\} : K_{\lambda, m} \leq q^{C(m)}\}. $$
From (\ref{eq:S1_intermediate_JandK}), and after recalling $J_{F, \lambda, m} \ll K_{\lambda, m} + 1$, we can then write $S_1 \ll R_1 + R_2 + q^{2m}$ where 
\begin{align*}
    R_1 = \sum_{\lambda \in L_1}K_{\lambda, m}^2, ~~R_2 = \sum_{\lambda \in L_2}K_{\lambda, m}J_{F, \lambda, m}. 
\end{align*}

To firstly deal with $R_2$, we have simply
$$R_2 \leq q^{o(m)}\sum_{\lambda \in \cI_r}J_{F, \lambda, m} \leq q^{2m + o(m)}. $$

Next to deal with $R_1$, note for $\lambda \in L_1$ we have by Cauchy-Schwarz and again Lemma \ref{lem:divisorbound}
\begin{align*}
    K_{\lambda, m}^2 
    &\ll q^{2m-2\deg a}\bigg{(} \sum_{a_1,a_2 = a}q^{\deg\gcd(a_1,a_2)}\bigg{)}^2\\
    &\leq q^{2m-2\deg a}\sum_{a_1'a_2'=a}1\sum_{a_1,a_2 = a}q^{2\deg\gcd(a_1,a_2)}\\
    &\leq q^{2m+ o(m)}\sum_{a_1a_2 = a}q^{2\deg\gcd(a_1,a_2)-2\deg a}.
\end{align*}
From the definitions of $\cL_{\lambda}$ and $\cB$, if $\lambda \neq \lambda'$ then $(a_\lambda, b_\lambda) \neq (a_{\lambda'}, b_{\lambda'})$. Thus recalling (\ref{eq:b_lambda_bound}) we have 

\begin{align*}
    R_1
    &\leq q^{2m + o(m)}\sum_{\substack{|a|< q^{2m} \\ |b| < q^m}}\sum_{\substack{a_1a_2 = a \\ |a_i| < q^m}}\min\bigg{\{}q^{-2\deg b},q^{2\deg\gcd(a_1,a_2)-2\deg a} \bigg{\}}\\
    &\leq q^{2m + o(m)}\sum_{|a_1|,|a_2|,|b| < q^m}\min\bigg{\{}q^{-2\deg b},q^{2\deg\gcd(a_1,a_2)-2\deg a_1-2\deg a_2} \bigg{\}}\\
    &\leq q^{2m + o(m)}\mathop{\sum\sum}\limits_{\substack{|t|< q^m, ~|b| < q^m \\ |a_1|,|a_2| < q^m \\ \gcd(a_1,a_2)=t}}\min\bigg{\{}q^{-2\deg b},q^{2\deg t-2\deg a_1-2\deg a_2} \bigg{\}}\\
    &\leq q^{2m + o(m)}\sum_{|t|< q^m}\sum_{|b|< q^m}\sum_{\substack{\substack{|a_1|,|a_2| < q^{m - \deg t}}}}\min\bigg{\{}q^{-2\deg b},q^{-2\deg(a_1a_2t)} \bigg{\}}.
\end{align*}
Once again using Lemma \ref{lem:divisorbound} we obtain 
\begin{align*}
    R_1 
    &\leq q^{2m + o(m)}\sum_{|b| < q^m}\sum_{|a| < q^{2m}}\min\bigg{\{}q^{-2\deg b},q^{-2\deg a} \bigg{\}}\\
    &\leq q^{2m + o(m)}\bigg{(}\sum_{|b|< q^m}\sum_{\deg a \leq \deg b}q^{-2\deg b} + \sum_{|a| < q^{2m}}\sum_{\deg b \leq \deg a}q^{-2\deg a} \bigg{)}\\
    &\leq q^{2m + o(m)}.
\end{align*}
Our estimates for $R_1$ and $R_2$ thus yield $S_1 \leq q^{2m + o(m)}$.

Combining our estimates for $S_0, S_1$ and $S_2$ gives the desired bound for $E_{F, 2}^{\sqrt}(\bm{1}_m). $

\subsection{Proof of Theorem~\ref{thm:T4_bound} } In preparation we firstly prove the following.

\begin{lem}\label{lem:K_delta}
Let $\epsilon > 0$ and $\Delta \geq 1$. For any  integer $m \leq r$, let 
$$\cD = \{\lambda \in \cI_r\setminus\{0\} ~:~ Q_{F, \lambda, m} \geq \Delta\}.$$
Let $K > 1$ and suppose we have 
\begin{align}\label{eq:K_lowerbound}
  K \geq q^{\epsilon m}\big{(}\frac{q^{15m/2-r/2}}{\Delta^{12}}+\frac{q^{5m-r/4}}{\Delta^{8}}\big{)}   
\end{align}
and 
\begin{align}\label{eq:delta_lowerbound}
    \Delta \geq q^{\epsilon m}(q^{3m/2-r/2} + q^{5m/8-r/8}). 
\end{align}
Further we define $\cF$ to be the set of $d \in \cI_r\setminus\{0\}$ such that 
\begin{align*}
     K \leq |\{(\lambda_1, \lambda_2) \in \cD^2 ~:~d = \lambda_1-\lambda_2 \}|.
\end{align*}

Then either 
$$K \ll 1 $$
or 
$$K|\cF| \ll \frac{q^{3m + o(m)}}{\Delta^4}. $$
\end{lem}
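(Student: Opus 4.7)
The strategy is to represent each $\lambda \in \cD$ via Lemma~\ref{lem:Q_two_cases} by a ``small rational'' modulo $F$, derive a key algebraic identity for pairs with a fixed difference $d$, and then bound $K|\cF|$ by counting points in auxiliary lattice/convex-body intersections.

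First, the hypothesis on $\Delta$ rules out the first alternative in Lemma~\ref{lem:Q_two_cases} for every $\lambda \in \cD$: indeed, for such $\lambda$ we have $Q_{F,\lambda,m}\ge\Delta > q^{o(m)}\max\{q^{3m/2-r/2},1\}$. Hence each $\lambda\in\cD$ admits $a_\lambda,b_\lambda$ coprime to $F$ with
\[
|a_\lambda|\ll q^{2m+o(m)}/\Delta^{2},\qquad |b_\lambda|\ll q^{m+o(m)}/\Delta^{2},\qquad 2\lambda^{2}b_\lambda\equiv a_\lambda\pmod F.
\]
Since $F$ is irreducible (so $\F_q[T]/\langle F\rangle$ is a field in which squaring is at most $2$-to-$1$), the assignment $\lambda\mapsto(a_\lambda,b_\lambda)$ is at most two-to-one, and consequently $|\cD|\ll q^{3m+o(m)}/\Delta^{4}$.

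Second, for any pair $(\lambda_1,\lambda_2)\in\cD^{2}$ with $\lambda_1-\lambda_2=d$, subtracting the defining relations for $\lambda_1$ and $\lambda_2$ after cross-multiplying by $b_{\lambda_1}b_{\lambda_2}$ produces the identity
\[
a_{\lambda_1}b_{\lambda_2}-a_{\lambda_2}b_{\lambda_1}\equiv 2d(\lambda_1+\lambda_2)\,b_{\lambda_1}b_{\lambda_2}\pmod F.
\]
Setting $U=b_{\lambda_1}b_{\lambda_2}$, $V=a_{\lambda_1}b_{\lambda_2}-a_{\lambda_2}b_{\lambda_1}$, and $\gamma\equiv 2d(\lambda_1+\lambda_2)\pmod F$, one obtains $|U|\ll q^{2m+o(m)}/\Delta^{4}$, $|V|\ll q^{3m+o(m)}/\Delta^{4}$, and $(U,V)$ belongs to the 2-dimensional lattice $\cL_\gamma=\{(u,v)\in\F_q[T]^{2}:v\equiv\gamma u\pmod F\}$. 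For each fixed $d$ the map $(\lambda_1,\lambda_2)\mapsto(U,V,\gamma)$ is injective, since $\gamma$ and $d$ recover $\lambda_1+\lambda_2$ modulo $F$, hence the pair itself.

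Third, assuming $K\gg 1$, the injectivity above gives
\[
K|\cF|\leq\sum_{d\in\cF}T(d)\leq\bigl|\{(U,V,\gamma) \text{ arising from a valid pair}\}\bigr|,
\]
and the task is to bound the right-hand side by $q^{3m+o(m)}/\Delta^{4}$. The plan is to split according to the successive minima of $\cL_\gamma$ relative to the convex body $\cB=\{(u,v)\in\Ki^{2}:|u|<q^{2m+o(m)}/\Delta^{4},\,|v|<q^{3m+o(m)}/\Delta^{4}\}$, which has $\vol\cB\ll q^{5m+o(m)}/\Delta^{8}$ and $\det\cL_\gamma=q^{r}$. When $\sigma_{1,\gamma}\le 1<\sigma_{2,\gamma}$ (one short vector spanning the intersection), we apply Lemma~\ref{lem:first} together with Lemma~\ref{lem:divisorbound} to bound the contribution by the number of $\F_q(T)$-collinear pairs, extracting a factor of $q^{15m/2-r/2+o(m)}/\Delta^{12}$; when $\sigma_{2,\gamma}\le 1$ (lattice ``fills'' the body), Lemma~\ref{lem:first-last} applied with the determinant $q^{r}$ contributes $q^{5m-r/4+o(m)}/\Delta^{8}$. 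In each regime the hypothesis on $K$ is exactly tailored so that dividing by $K$ reduces the bound to the required $q^{3m+o(m)}/\Delta^{4}$.

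The main obstacle is setting up and verifying the two lattice-counting lemmas: specifically, checking that distinct $\gamma$'s yield lattices with trivial intersection inside $\cB$ (so Lemmas~\ref{lem:first}--\ref{lem:first-last} apply), and correctly identifying the regime of the successive minima using the lower bound $\Delta\ge q^{\epsilon m}q^{5m/8-r/8}$, which is precisely what forces the volume-to-determinant ratio into the range where Lemma~\ref{lem:first-last} gives the sharp exponent in the second case.
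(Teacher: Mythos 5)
Your opening moves agree with the paper: you correctly rule out the degenerate alternative of Lemma~\ref{lem:Q_two_cases} using the lower bound~\eqref{eq:delta_lowerbound}, obtain $(a_\lambda,b_\lambda)$ for each $\lambda\in\cD$ with the stated sizes and the congruence $a_\lambda b_\lambda^{-1}\equiv 2\lambda^2\pmod F$, and deduce $|\cD|\ll q^{3m+o(m)}/\Delta^4$. After that, however, your approach has a genuine gap.

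The paper does not stop at the linear identity $a_{\lambda_1}b_{\lambda_2}-a_{\lambda_2}b_{\lambda_1}\equiv 2d(\lambda_1+\lambda_2)b_{\lambda_1}b_{\lambda_2}$. It first squares the starting relation $\lambda_1^2-\lambda_2^2-d^2\equiv 2\lambda_2 d$ precisely to \emph{eliminate} the unknowns $\lambda_1,\lambda_2$ entirely, arriving at a congruence in $a_{\lambda_i},b_{\lambda_i},d$ alone (the paper's~\eqref{eq:equation_for_triple_in_intersection}). This produces, for each fixed $d$, a single three-dimensional lattice $\cL=\{(x,y,z):x+yd^2+zd^4\equiv 0\pmod F\}$ that captures \emph{all} pairs $(\lambda_1,\lambda_2)$ with difference $d$, giving $K\le T(d)\le|\cL\cap\cB|$. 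Your construction skips the squaring, so the auxiliary lattice $\cL_\gamma$ is indexed by $\gamma\equiv 2d(\lambda_1+\lambda_2)$, which varies with the pair $(\lambda_1,\lambda_2)$ and not only with $d$. For a fixed $d\in\cF$, the $K$ or more pairs therefore land in many different lattices, and there is no single $|\cL_\gamma\cap\cB|$ that dominates $T(d)$. Your trivial upper bound $K|\cF|\le|\{(U,V,\gamma)\}|$ then collapses to $|\{(U,V)\}|$ (since $U\ne 0$ determines $\gamma=VU^{-1}$), which is only bounded by $|\cD|^2\ll q^{6m+o(m)}/\Delta^8$, an order of magnitude off from the target $q^{3m+o(m)}/\Delta^4$.

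A second, independent mismatch is your intended use of Lemmas~\ref{lem:first} and~\ref{lem:first-last}. Those lemmas control sums of the form $\sum_\alpha|\cL_\alpha\cap\cS\cap\cB|^2$, which is exactly how they are used in the proof of Theorem~\ref{thm:sums_of_inverses}; but $K|\cF|$ is not such a sum of squares, and you never identify a suitable multiset $\cS$ or verify the pairwise-trivial-intersection hypothesis for the family $\{\cL_\gamma\}$. The paper instead argues through Lemma~\ref{lem:intersection_3_cases}: in the case $\sigma_1\le 1<\sigma_2$ the one-dimensional structure of $\cL\cap\cB$ forces $K\ll 1$; in the case $\sigma_2\le 1$ it produces a short vector $(x,y,z)$ in the dual with $xd^2\equiv y$, $xd^4\equiv z\pmod F$, uses~\eqref{eq:K_lowerbound} and~\eqref{eq:delta_lowerbound} to promote these congruences and the quadratic relation to genuine equations over $\F_q[T]$, and then counts the resulting square condition $a_{\lambda_1}b_{\lambda_1}a_{\lambda_2}b_{\lambda_2}=v^2$ to land at $q^{3m+o(m)}/\Delta^4$. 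None of this machinery is reachable from the two-dimensional, $\gamma$-indexed setup, so the exponents in your hypothesized bounds for the two regimes are not actually derived. You would need to incorporate the squaring/elimination step before the lattice argument can close.
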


\begin{proof}
Firstly, for a fixed $d \in \cF$ if $\lambda_1, \lambda_2 \in \cD$ satisfy $\lambda_1 - \lambda_2 = d$ then 
\begin{align*}
    \lambda_1^2 - \lambda_2^2 - d^2  &\equiv (\lambda_1-\lambda_2)^2 + 2\lambda_1\lambda_2 - 2\lambda_2^2 - d^2 \\ 
    &\equiv 2\lambda_2(\lambda_1-\lambda_2) \equiv 2\lambda_2d ~~~~~\Mod{F}.
\end{align*}
Squaring and multiplying by $4$ then yields 
$$(2\lambda_1^2 - 2\lambda_2^2 - 2d^2)^2 \equiv 8d^2(2\lambda_2^2) \Mod{F}. $$
Now (\ref{eq:delta_lowerbound}) implies $Q_{F, \lambda}(\bm{1}_m) \geq q^{\epsilon m + 3m/2-r/2}$, so by Lemma \ref{lem:Q_two_cases}, for significantly large $m$ there exists $a_{\lambda}, b_{\lambda} \in \F_q[T]$ such that 
\begin{align}\label{eq:alambda_blambda_bounds}
    |a_\lambda| \leq \frac{q^{2m +o(m)}}{\Delta^2}, ~|b_\lambda| \leq \frac{q^{m + o(m)}}{\Delta^2}
\end{align}
and $a_{\lambda}b_{\lambda}^{-1} \equiv 2\lambda^2 \Mod{F}$, such that $\gcd(a_\lambda, b_\lambda) = 1$. Thus if we let 
\begin{align}\label{eq:I(d)}
    I(d) = \{(\lambda_1, \lambda_2) \in \cD :
    (a_{\lambda_1}&b_{\lambda_1}^{-1} - a_{\lambda_2}b_{\lambda_2}^{-1} - 2d^2)^2 \\
    &\equiv 8d^2a_{\lambda_2}b_{\lambda_2}^{-1} \Mod{F}\}\nonumber
\end{align}
then 
$$K \leq I(d). $$
Multiplying the congruence in (\ref{eq:I(d)}) by $b_{\lambda_1}^2b_{\lambda_2}^2$ gives 
$$(a_{\lambda_1}b_{\lambda_2} - a_{\lambda_2}b_{\lambda_1}-2d^2b_{\lambda_1}b_{\lambda_2})^2 \equiv 8d^2a_{\lambda_2}b_{\lambda_1}^2b_{\lambda_2} \Mod{F}$$
and rearranging then yields 
\begin{align}\label{eq:equation_for_triple_in_intersection}
    (a_{\lambda_1}b_{\lambda_2}-a_{\lambda_2}b_{\lambda_1})^2 -4d^2b_{\lambda_1}b_{\lambda_2}(&a_{\lambda_1}b_{\lambda_2}+a_{\lambda_2}b_{\lambda_1})\\\nonumber 
    &+4d^4(b_{\lambda_1}b_{\lambda_2})^2 \equiv 0 \Mod{F}.
\end{align}
Now let
$$\cL = \{(x,y,z) \in \F_q[T]^3~:~ x + yd^2 + zd^4 \equiv 0 \Mod{F}\} $$
and 
$$\cB = \{(x,y,z) ~:~|x| \leq \frac{q^{6m + C_0(m)}}{\Delta^8}, |y| \leq \frac{q^{5m + C_0(m)}}{\Delta^8}, |z| \leq \frac{q^{4m + C_0(m)}}{\Delta^8}\} $$
for a suitable function $C_0(m) = o(m)$. Using (\ref{eq:alambda_blambda_bounds}) and  (\ref{eq:equation_for_triple_in_intersection}) we obtain
\begin{align*}
    ((a_{\lambda_1}b_{\lambda_2}-a_{\lambda_2}b_{\lambda_1})^2, -4b_{\lambda_1}b_{\lambda_2}(a_{\lambda_1}b_{\lambda_2}+a_{\lambda_2}b_{\lambda_1}),
    4(b_{\lambda_1}b_{\lambda_2})^2) \in \cL \cap \cB
\end{align*} 
so $|\cL \cap \cB| \geq I(d)$. 
Let $\sigma_1, \sigma_2, \sigma_3$ denote the successive minima of $\cL$ with respect to $\cB$. Since $K \leq I(d) \leq |\cL \cap \cB|$, and since we have assumed $K > 1$, this implies $\sigma_1 \leq 1$. 

If we assume that $\sigma_2 > 1$, then every point in $\cL \cap \cB$ is an $\F_q[T]$ multiple of the vector corresponding to $\sigma_1$. So there exists some fixed $x_0, y_0, z_0 \in \F_q[T]$ such that for any $\lambda_1, \lambda_2 \in \cD$, 
\begin{align*}
    ((a_{\lambda_1}b_{\lambda_2}-a_{\lambda_2}b_{\lambda_1})^2, -4b_{\lambda_1}b_{\lambda_2}(a_{\lambda_1}b_{\lambda_2}+a_{\lambda_2}b_{\lambda_1}),
    4(b_{\lambda_1}b_{\lambda_2})^2)  = s(x_0, y_0, z_0)
\end{align*}
for some $s \in \F_q[T]$. Recall Lemma \ref{lem:Q_two_cases} ensures that $b_{\lambda_1}b_{\lambda_2} \neq 0$ since they are both coprime to $F$, so this means $z_0 \neq 0$. Thus 
$$\bigg{(}\frac{a_{\lambda_1}}{b_{\lambda_1}} - \frac{a_{\lambda_2}}{b_{\lambda_2}} \bigg{)}^2 = \frac{x_0}{z_0}$$
and
$$\frac{a_{\lambda_1}}{b_{\lambda_1}} + \frac{a_{\lambda_2}}{b_{\lambda_2}} = \frac{y_0}{z_0}.  $$
Thus 
\begin{align*}
    K \ll \bigg{|}\bigg{\{} (\lambda_1, \lambda_2) \in \cD : \bigg{(}\frac{a_{\lambda_1}}{b_{\lambda_1}} - &\frac{a_{\lambda_2}}{b_{\lambda_2}} \bigg{)}^2 = \frac{x_0}{z_0}, \\
    &\frac{a_{\lambda_1}}{b_{\lambda_1}} + \frac{a_{\lambda_2}}{b_{\lambda_2}} = \frac{y_0}{z_0}\bigg{\}}\bigg{|}.
\end{align*}
Note that if a square-root for $x_0/z_0$ exists, then this system of equations implies that $a_{\lambda_1}/b_{\lambda_1}$ and $a_{\lambda_2}/b_{\lambda_2}$ can take at most 2 values each. Since these are reduced fractions, there are thus at most $2q$ possibilities for each pair $(a_{\lambda_i}, b_{\lambda_i}).$ In particular, this implies 
$$K \ll 1. $$

We now suppose that $\sigma_2 \leq 1$. Let $G(x,y,z)$ count the number of solutions to 
$$b_1b_2 = z, ~ a_1b_2+a_2b_1 = y,~a_1b_2 - a_2b_1 = x$$
with 
$$|b_1|,|b_2| \leq \frac{q^{m + o(m)}}{\Delta^2}, ~|a_1|,|a_2| \leq \frac{q^{2m + o(m)}}{\Delta^2}.  $$
Note that given $z$, there are at most $q^{o(m)}$ possibilities for $(b_1, b_2)$ by Lemma \ref{lem:divisorbound}. Thus $G(x,y,z) \leq q^{o(m)}$. By (\ref{eq:alambda_blambda_bounds}) and (\ref{eq:equation_for_triple_in_intersection}) this implies 

\begin{align}\label{eq:K_intermsof_J}
    K  &\ll \sum_{\substack{|y|,|x| \leq q^{3m + C_0(m)}/\Delta^4 \\ |z| \leq q^{2m + C_0(m)}/\Delta^4 \\ x^2- 4d^2zy + 4d^4z^2 \equiv 0 \Mod{F}}} G(x,y,z) \nonumber \\
    &\leq q^{o(m)}|\{(x,y,z) \in \F_q[T]^3 : |y| \leq \frac{q^{5m + C_0(m)}}{\Delta^8},~ |x| \leq \frac{q^{3m + C_0(m)}}{\Delta^4},\nonumber\\ 
    &\quad\quad\quad\quad\quad\quad|z| \leq \frac{q^{2m + C_0(m)}}{\Delta^4},~
     x^2+ d^2y + d^4z^2 \equiv 0 \Mod{F}\}|.
\end{align}
For any fixed $x,z \in \F_q[T]$, (\ref{eq:delta_lowerbound}) implies that for large enough $m$ there is at most one value of $y$ that satisfies (\ref{eq:K_intermsof_J}). Also for any two solutions $(x_1, y_1, z_1), (x_2, y_2, z_2)$ to (\ref{eq:K_intermsof_J}) we have 
$$x_1^2- x_2^2 + d^2(y_1-y_2) + d^4(z_1^2 - z_2^2) \equiv 0 \Mod{F}. $$
Again using Lemma \ref{lem:divisorbound}, and keeping in mind that $y_1,y_2$ are uniquely defined by the other variables, we can say 
$$K^2 \leq q^{o(m)}|\cL \cap \cB| $$
where $\cL$ and $\cB$ are as before. 
Noting (\ref{eq:K_lowerbound}) and that in this case $\sigma_2 \leq 1$,  Lemma \ref{lem:intersection_3_cases} implies there exists some $x,y,z \neq 0$ such that 
\begin{align*}
    |x| \leq \frac{q^{9m + C_2(m)}}{\Delta^{16}K^2}, ~|y| \leq \frac{q^{10m + C_2(m)}}{\Delta^{16}K^2},~|z| \leq \frac{q^{11m + C_2(m)}}{\Delta^{16}K^2}
\end{align*}
for a suitable function $C_2(m) = o(m)$, where $x,y,z$ additionally satisfy 
\begin{align}\label{eq:relating_xyz}
   xd^2 \equiv y \Mod{F}, ~xd^4 \equiv z \Mod{F}. 
\end{align}
Since all terms are coprime to $F$, we may assume that $\gcd(x,y,z) = 1$.

From this point forward, these values $x,y,z$ are fixed and depend on $d$. Recall that $K$ is bounded by the number of $\lambda_1, \lambda_2$ satisfying (\ref{eq:equation_for_triple_in_intersection}). Substituting (\ref{eq:relating_xyz}) into (\ref{eq:equation_for_triple_in_intersection}) yields 
\begin{align*}
    x(a_{\lambda_1}b_{\lambda_2}-a_{\lambda_2}b_{\lambda_1})^2 -4yb_{\lambda_1}b_{\lambda_2}(&a_{\lambda_1}b_{\lambda_2}+a_{\lambda_2}b_{\lambda_1})\\
    &+4z(b_{\lambda_1}b_{\lambda_2})^2 \equiv 0 \Mod{F}.
\end{align*}
(\ref{eq:K_lowerbound}) implies that for large enough $m$, all of the terms in this equation have degree less than $\deg F$. Thus we can in fact say 
\begin{align*}
    x(a_{\lambda_1}b_{\lambda_2}-a_{\lambda_2}b_{\lambda_1})^2 -4yb_{\lambda_1}b_{\lambda_2}(&a_{\lambda_1}b_{\lambda_2}+a_{\lambda_2}b_{\lambda_1})\\
    &+4z(b_{\lambda_1}b_{\lambda_2})^2 =0.
\end{align*}
Similarly, (\ref{eq:relating_xyz}) gives $y^2 \equiv xz \Mod{F}$. Again, (\ref{eq:K_lowerbound}) ensures all of these terms have degree less than $\deg F$, and thus $y^2 = xz$. Combining these yields 
$$\bigg{(}\frac{a_{\lambda_1}}{b_{\lambda_1}} - \frac{a_{\lambda_2}}{b_{\lambda_2}}\bigg{)}^2 - 4\frac{y}{x} \bigg{(} \frac{a_{\lambda_1}}{b_{\lambda_1}} + \frac{a_{\lambda_2}}{b_{\lambda_2}} \bigg{)}  + 4\bigg{(}\frac{y}{x} \bigg{)}^2 =0.$$
Thus
\begin{align}\label{eq:withsqrt}
    \frac{y}{x} = \frac{1}{2}\bigg{(} \frac{a_{\lambda_1}}{b_{\lambda_1}} + \frac{a_{\lambda_2}}{b_{\lambda_2}} \bigg{)}  + \frac{(a_{\lambda_1}b_{\lambda_1}a_{\lambda_2}b_{\lambda_2})^{1/2}}{b_{\lambda_2}b_{\lambda_1}}
\end{align}
with the obvious caveat that $(a_{\lambda_1}b_{\lambda_1}a_{\lambda_2}b_{\lambda_2})^{1/2}$ either does not exist or takes on two possible values. Therefore 
\begin{align*}
    K \ll \bigg{|} \bigg{\{} (a_{\lambda_1}, &a_{\lambda_2}, b_{\lambda_1}, b_{\lambda_2}) \in \F_{q}[T]^4: \\
    &|b_1|,|b_2| \leq \frac{q^{m + C_3(m)}}{\Delta^2}, ~|a_1|,|a_2| \leq \frac{q^{2m + C_3(m)}}{\Delta^2} \\
    &\quad 0 \neq (a_{\lambda_1}b_{\lambda_1}a_{\lambda_2}b_{\lambda_2})^{1/2} \text{  exists and (\ref{eq:withsqrt}) holds}\bigg{\}}\bigg{|}
\end{align*}
for a suitable function $C_3(m) = o(m)$. Now if we sum the above over all $d \in \cF$, we note that for a given $x,y,z$ there are at most 2 possible values for $d$ by (\ref{eq:relating_xyz}).  Thus 
\begin{align*}
    K|\cF| &\ll \sum_{\substack{x,y \\ \gcd(x,y) = 1}}\bigg{|}\bigg{\{} (a_{\lambda_1}, a_{\lambda_2}, b_{\lambda_1}, b_{\lambda_2}) \in \F_{q}[T]^4: \\
    &\quad\quad\quad\quad \quad \quad  |b_1|,|b_2| \leq \frac{q^{m + C_3(m)}}{\Delta^2}, ~|a_1|,|a_2| \leq \frac{q^{2m + C_3(m)}}{\Delta^2} \\
    &\quad\quad \quad\quad\quad \quad 0 \neq (a_{\lambda_1}b_{\lambda_1}a_{\lambda_2}b_{\lambda_2})^{1/2} \text{  exists and (\ref{eq:withsqrt}) holds}\bigg{\}}\bigg{|}\\
    &\ll \bigg{|}\bigg{\{} (a_{\lambda_1}, a_{\lambda_2}, b_{\lambda_1}, b_{\lambda_2}) \in \F_{q}[T]^4: \\
    &\quad\quad \quad \quad\quad\quad   |b_1|,|b_2| \leq \frac{q^{m + C_3(m)}}{\Delta^2}, ~|a_1|,|a_2| \leq \frac{q^{2m + C_3(m)}}{\Delta^2} \\
    &\quad\quad \quad \quad\quad\quad \quad \quad \quad\quad  a_{\lambda_1}b_{\lambda_1}a_{\lambda_2}b_{\lambda_2} = v^2 \text{  for some } v \in \F_q[T]\bigg{\}}\bigg{|}\\ 
\end{align*}
where the second line comes from the fact that the quadruple $(a_{\lambda_1}, a_{\lambda_2},$ $b_{\lambda_1}, b_{\lambda_2})$ uniquely determines $(x,y)$ (up to a constant) since we can assume this fraction  $y/x$ is reduced. Again using Lemma \ref{lem:divisorbound} we can thus conclude 
\begin{align*}
    K|\cF| 
    &\leq q^{o(m)}\bigg{|}\bigg{\{} |x| \leq \frac{q^{6m + o(m)}}{\Delta^8} : ~x = v^2 \text{ for some } v \in \F_q[T] \bigg{\}}\bigg{|}\\
    &\leq \frac{q^{3m + o(m)}}{\Delta^4}
\end{align*}
as desired. 
\end{proof}

We can now complete the proof of Theorem \ref{thm:T4_bound}. We will assume that $m < r/3$, since otherwise (\ref{eq:T4_trivial}) is stronger than the desired bound. 
We recall the notation 
\begin{align*}
\begin{split}
        Q_{F, \lambda, m} &= |\{(u,v) \in \cI_r^2:~\deg_F(u^2),\deg_F(v^2) < m,~u-v \equiv \lambda \Mod{F}\}| \\
    &= |\{(u,v) \in \cI_r^2:~\deg_F(u^2),\deg_F(v^2) < m,~u+v \equiv \lambda \Mod{F}\}|. 
\end{split}
\end{align*}
Using this we can write
\begin{align*}
    E_{F, 4}^\sqrt(\bm{1}_m)
    &= \sum_{\lambda \in \cI_r}\bigg{(}\sum_{d \in \cI_r}Q_{F,d, m}Q_{F, \lambda-d, m} \bigg{)}^2.
\end{align*}
We now apply the dyadic pigeonhole principle. For any non-negative integer $\nu$ we define 
$$\cD_\nu = \{d \in \cI_r~:~ 2^\nu \leq Q_{F,d, m} < 2^{\nu+1}\}. $$
Again, we can ignore the case $Q_{F,d,m} = 0$ since this contributes nothing. For any $d \in \cI_r$ we define the characteristic function
$$\cD_\nu(d) = 
\begin{cases}
1, &d \in \cD_\nu\\
0, &d \not\in \cD_\nu.
\end{cases}
$$
Then for any pair $(u,v)$ we have 
\begin{align}\label{eq:DPHP_d_lambda}
\begin{split}
        \sum_{\lambda \in \cI_r}\bigg{(}\sum_{\substack{d \in \cI_r \\ d \in \cD_u \\ \lambda-d \in \cD_v}}&Q_{F,d, m}Q_{\lambda-d, m} \bigg{)}^2  \\ &\ll 2^{2u + 2v}\sum_{\lambda \in \cI_r}\bigg{(}\sum_{\substack{d \in \cI_r}}\cD_u(d)\cD_v(\lambda-d)\bigg{)}^2.
\end{split}
\end{align}
Of course for any given $\lambda$, every $d \in \cI_r$ satisfies $d \in \cD_u$ and $\lambda -d \in \cD_v$ for some pair $(u,v)$. This implies
\begin{align*}
    \sum_{\lambda \in \cI_r}\bigg{(}\sum_{\substack{d \in \cI_r}}&Q_{F,d, m}Q_{\lambda-d, m} \bigg{)}^2  &\ll \sum_{(u,v)} \sum_{\lambda \in \cI_r}\bigg{(}\sum_{\substack{d \in \cI_r \\ d \in \cD_u \\ \lambda-d \in \cD_v}}&Q_{F,d, m}Q_{\lambda-d, m} \bigg{)}^2,
\end{align*}
The trivial bound $Q_{F,d, m} \ll q^{2m}$ implies that there are at most $q^{o(m)}$ pairs $(u,v)$ such that $|\cD_v| \neq 0$ and $|\cD_u| \neq 0$. Thus if we let $(u,v)$ be the pair for which (\ref{eq:DPHP_d_lambda}) is maximized we have 
$$ E_{F, 4}^\sqrt(\bm{1}_m) \leq q^{o(m)}2^{2v + 2u}\sum_{\lambda \in \cI_r}\bigg{(}\sum_{\substack{d \in \cI_r}}\cD_u(d)\cD_v(\lambda-d)\bigg{)}^2. $$
Now we see simply that 
\begin{align*}
    \bigg{(}\sum_{\lambda \in \cI_r}\bigg{(}\sum_{\substack{d \in \cI_r}}\cD_u(d)\cD_v(\lambda-d)&\bigg{)}^2\bigg{)}^2\\
    \leq
    \bigg{(}\sum_{\lambda \in \cI_r}&\bigg{(}\sum_{\substack{d \in \cI_r}}\cD_u(d)\cD_u(\lambda-d)\bigg{)}^2\bigg{)}\\
    &\bigg{(}\sum_{\lambda \in \cI_r}\bigg{(}\sum_{\substack{d \in \cI_r}}\cD_v(d)\cD_v(\lambda-d)\bigg{)}^2\bigg{)}
\end{align*}
which implies
\begin{align}\label{eq:DPHP_D}
    E_{F, 4}^\sqrt(\bm{1}_m) \leq q^{o(m)}\Delta^{4}\sum_{\lambda \in \cI_r}\bigg{(}\sum_{\substack{d \in \cI_r}}\cD(d)\cD(\lambda-d)\bigg{)}^2
\end{align}
for some $\cD, \Delta$ satisfying 
$$\cD = \{d \in \cI_r ~:~\Delta \leq Q_{F,d,m} < 2\Delta\}. $$
Of course note that $\Delta \neq 0$. We recall the notation
$$(\cD * \cD)(\lambda) = \sum_{\substack{d \in \cI_r}}\cD(d)\cD(\lambda-d) $$
and we write 
$$S(\cD) = \sum_{\lambda \in \cI_r}(\cD * \cD)(\lambda)^2 $$
so that (\ref{eq:DPHP_D}) becomes 
\begin{align}\label{eq:T_intermsof_S}
    E_{F, 4}^\sqrt(\bm{1}_m) \leq q^{o(m)}\Delta^{4}S(\cD).
\end{align}
Note that $(\cD * \cD)(0) = |\cD|$
since $Q_{F,d, m} = Q_{F, -d, m}$, so $d \in \cD$ if and only if $-d \in \cD$. Thus 
$$S(\cD) = |\cD|^2 + \sum_{\lambda \in \cI_r\setminus\{0\}}(\cD * \cD)(\lambda)^2. $$
If we assume $S(\cD) \leq 2|\cD|^2$, combining this and (\ref{eq:T_intermsof_S}) with
$$\Delta^2|\cD| \leq \sum_{d \in \cD}Q_{F,d,m}^2 \leq E_{F, 2}^{\sqrt}(\bm{1}_m)$$
yields 
$$E_{F, 4}^\sqrt(\bm{1}_m) \leq q^{o(m)}\Delta^4|\cD|^2 \leq q^{o(m)}E_{F, 2}^{\sqrt}(\bm{1}_m)^2. $$
Now applying Theorem \ref{thm:energy_1m_improved} implies the desired bound. 

Thus, we now assume 
$$S(\cD) \leq 2\sum_{\lambda \in \cI_r\setminus\{0\}}(\cD * \cD)(\lambda)^2. $$
We again apply the dyadic pigeon hole principle. For any non-negative integer $w$ we define 
$$\cF_w = \{\lambda \in \cI_r\setminus\{0\} ~:~ 2^w \leq (\cD * \cD)(\lambda) < 2^{w+1}\}, $$
ignoring the case $(\cD * \cD)(\lambda) = 0$ as this contributes nothing. Then for a given $w$ we have
$$S(\cD) \leq 2\sum_{\lambda \in \cF_w}(\cD * \cD)(\lambda)^2 \ll 2^{2w}|\cF_w|.  $$
Now there are at most $q^{o(m)}$ such $w$ for which this sum is non-zero since $(\cD * \cD)(f) \leq |\cD| \leq \Delta|\cD| \leq q^{2m}$. Thus if the above sum is maximized at some $w \in \Z$, if we let $K = 2^{w+1}$ we have that 
$$S(\cD) \leq q^{o(m)}K^2|\cF|$$
where
$$\cF = \{\lambda \in \cI_r\setminus\{0\}~:~K \leq (\cD * \cD)(\lambda) < 2K\}. $$
Thus (\ref{eq:T_intermsof_S}) yields 
$$E_{F, 4}^\sqrt(\bm{1}_m) \leq q^{o(m)}\Delta^4K^2|\cF|. $$
We now fix some $\epsilon > 0$. Firstly, we assume that either 
$$K < q^{\epsilon m}\big{(}\frac{q^{15m/2-r/2}}{\Delta^{12}}+\frac{q^{5m-r/4}}{\Delta^{8}}\big{)}  $$
or
$$\Delta < q^{\epsilon m}(q^{3m/2-r/2} + q^{5m/8-r/8}). $$
We trivially have 
$$K |\cF| \leq |\cD|^2~ \text{ and } ~ \Delta|\cD| \ll q^{2m}$$
so in the first case using the assumed bound on $K$ we have
\begin{align*}
    E_{F, 4}^\sqrt(\bm{1}_m) 
    &\leq q^{o(m)}\Delta^4K|\cD|^2\\
    &\leq q^{4m + o(m)}\Delta^2K\\
    &\leq q^{4m + \epsilon m + o(m)}\big{(}\frac{q^{15m/2-r/2}}{\Delta^{10}}+\frac{q^{5m-r/4}}{\Delta^{6}}\big{)}\\
    &\leq q^{6m + \epsilon m + o(m)}\big{(}{q^{11m/2-r/2}}+{q^{3m-r/4}}\big{)}.
\end{align*}
Since we have assumed that $m < r/3$, then Theorem \ref{thm:energy_1m_improved} implies 
$$\Delta^2|\cD| \leq E_{F, 2}^{\sqrt}(\bm{1}_m) \leq q^{2m + o(m)}. $$
So in the second case, using $K \leq |\cD|$ and our assumed bound for $\Delta$ we have
\begin{align*}
    E_{F, 4}^\sqrt(\bm{1}_m) 
    &\leq q^{o(m)}\Delta^4|\cD|^3\\
    &\leq q^{6m + o(m)}\Delta\\
    &\leq (q^{3m/2-r/2} + q^{5m/8-r/8})q^{6m + \epsilon m + o(m)}.
\end{align*}
Combining these two estimates for $ E_{F, 4}^\sqrt(\bm{1}_m) $ yields 
$$E_{F, 4}^\sqrt(\bm{1}_m) \leq q^{6m + \epsilon m + o(m)}\big{(}{q^{11m/2-r/2}}+{q^{3m-r/4}} + q^{5m/8-r/8}\big{)}. $$

On the other hand, suppose that 
$$K \geq q^{\epsilon m}\big{(}\frac{q^{15m/2-r/2}}{\Delta^{12}}+\frac{q^{5m-r/4}}{\Delta^{8}}\big{)}  $$
and
$$\Delta \geq q^{\epsilon m}(q^{3m/2-r/2} + q^{5m/8-r/8}). $$
Thus by Lemma \ref{lem:K_delta}, either 
$$K \ll 1 $$
or 
$$K |\cF| \ll \frac{q^{3m+o(m)}}{\Delta^4}.  $$

In the first case we obtain 
$$E_{F,4}^{\sqrt}(\bm{1}_m) \leq q^{o(m)}\Delta^4K|\cF| \leq q^{o(m)}\Delta^4|\cD|^2 \leq q^{4m + o(m)} . $$

In the second case we would have 
\begin{align*}
    E_{F,4}^{\sqrt}(\bm{1}_m)
    &\leq q^{o(m)}\Delta^4K\frac{q^{3m + o(m})}{\Delta^4}\\
    &\leq q^{3m + o(m)}K\\
    &\leq q^{5m + o(m)}.
\end{align*}

Thus all together we have 
$$E_{F, 4}^\sqrt(\bm{1}_m) \leq q^{6m + \epsilon m + o(m)}\big{(}{q^{11m/2-r/2}}+{q^{3m-r/4}} + q^{5m/8-r/8}\big{)} + q^{5m + o(m)} $$
and the result follows, since $\epsilon$ is arbitrary.

 \section*{Acknowledgements}
 
The authors are grateful to Igor Shparlinski for many helpful comments and discussions throughout the preparation of this paper.
 
During the preparation of this work, C.B. was supported by an Australian Government Research Training Program (RTP) Scholarship and 
B.K. by the Australian Research Council (DE220100859). 

\bibliographystyle{plain} 

\bibliography{refs}

\appendix
\newpage
\section{Notation Guide}\label{notationguide}
\begin{center}
    \begin{longtable}{ p{3cm} p{8cm}  }
    \hline 
    \multicolumn{2}{l}{General Notation} \\
  $q$ & a prime power; if $q$ is required to be odd, then this is specified \\
  $\F_q$ & the finite field of order $q$ \\
   $\F_q[T]$ &  the ring of univariate polynomials with coefficients from $\F_q$\\
   $\F_q(T)$ & the field of fractions of $\F_q[T]$\\
   $\Ki$ & the field of Laurent series in $1/T$ over $\F_q$, or equivalently the completion of $\F_q(T)$ with respect to $|~\cdot~|$\\
    $F$     &     a polynomial in $\F_q[T]$ \\
    $r$     &      the degree of $F$     \\
       $|~\cdot~|$ & the absolute value on $\Ki$ \\
    $\|~\cdot~\|$ & the sup-norm on $\Ki^d$ induced by $|~\cdot~|$\\
    $e(~\cdot~)$ & the canonical additive character of  $\Ki$\\
    $\mu$ & a Haar measure on $\Ki$, normalized such that the unit ball has measure $1$ \\
            $\deg_Fx$ & for $x \in \F_q[T]$, the degree of the unique polynomial $x'$ satisfying $\deg x' < r$ and $x' \equiv x \Mod{F}$\\
    $\ov{x}$ & for $x \in \F_q[T]$, the multiplicative inverse of $x$ modulo $F$ (if the inverse taken to a different modulus, this is specified)\\
          \hline
      \multicolumn{2}{l}{Geometry of Numbers} \\
      $\cL$ & a lattice in $\Ki^d$\\
      $A_\cL$ & an element of $\textup{GL}(\Ki^d)$ such that $\cL = A_\cL \F_q[T]^d$\\
      $\det \cL$ & the determinant of $A_\cL$, which can be interpreted as the measure (with respect to $\mu$) of one of it's fundamental cells \\
      $\cL^*$ & the dual lattice to $\cL$ \\
      $\cB$ & a convex body in $\Ki^d$\\
      $U_\cB$ & an element of $\textup{GL}(\Ki^d)$ such that $\cB = U_\cB B_1^d$\\
      $\vol \cB$ & the determinant of $U_\cB$, which can be interpreted as measure of $\cB$ with respect to $\mu$\\ 
      $\cB^*$ & the dual body to $\cB$ \\
      $N_\cB$ & a norm function induced by $\cB$ \\
      $\sigma_1,...,\sigma_d$ & the successive minima of $\cL$ with respect to $\cB$ \\
            $\sigma_1^*,...,\sigma_d^*$ & the successive minima of $\cL^*$ with respect to $\cB^*$ \\
    $\langle ~, \rangle$ & the dot product on $\Ki^d$\\
     \hline
    \multicolumn{2}{l}{Other Sets and Weights} \\
    $B_m(s)$        &     the set $\{x \in \Ki^d : \|x-s\| < q^m\}$ \\ 
    $B_m$        &     the set $B_m(0)\subseteq \Ki$ \\
    $\cI_m(s)$ & the set $B_m(s) \cap \F_q[T]^d$\\
    $\cI_m$ & the set $B_m \cap \F_q[T]$\\
    $\cS$ & typically denotes some finite subset of $\Ki$, but most often $\cS \subseteq \F_q[T]$\\
    $|\cS|$ & the cardinality of $\cS$\\
    $\cS(x)$ & given some $x \in \Ki$, $\cS(x)$ is equal to $1$ if $x \in \cS$ and $0$ otherwise\\
     $\balpha$ & a sequence of complex weights, most often on $\F_q[T]$ such that $\balpha$ is periodic modulo $F$ and has non-zero support only on $\{\deg_Fx < m\}$\\
     $\bm{1}_m$ & the characteristic function on $\{\deg_F x < m\}$\\
\hline 
    \multicolumn{2}{l}{Main Counting Functions} \\
        $C_{F, \Phi}(\cS)$ & the number of solutions to 
    \begin{center}
       $\Phi(x_1,x_2) \equiv 0 \Mod{F}$ 
    \end{center}
    with $(x_1, x_2) \in \cS \subseteq \F_q[T]^2$ and $\Phi(x,y) \in \F_q[T][x,y]$ \\
     $E_{F, k}^\inv(\cS)$ & the number of solutions to 
    \begin{center}
       $\ov{x_1} + ... + \ov{x_k} \equiv \ov{x_{k+1}} + ... + \ov{x_{2k}} \Mod{F}$ 
    \end{center}
    with $x_i \in \cS \subseteq \F_q[T]$\\
 $E^{\sqrt}_{F, k}(\balpha)$ & the sum 
 \begin{center}
$\displaystyle \smashoperator[r]{\sum_{\substack{(x_1,...,x_{2k}) \in \cI_r^k \\  x_1+...+x_k \equiv x_{k+1}+...+x_{2k} (F)}}}\:\:\:\:\alpha({x_1^2})\ov{\alpha({x_2^2})}\cdots\alpha({x_{2k-1}^2})\ov{\alpha({x_{2k}^2})}$
     \end{center}
     with $\balpha$ as above. If $\balpha = \bm{1}_m$ then the sum is equal to the number of solutions to 
     \begin{center}
         $x_1 + ... + x_{k} \equiv x_{k+1} + ... + {x_{2k}} \Mod{F}$
     \end{center}
     with $x_i \in \cI_r$ and $\deg_F(x_i^2) < m$
     \\
     \multicolumn{2}{l}{Other Counting Functions} \\
     $I_{F, \lambda, k }(\cS)$ & the number of solutions to 
    \begin{center}
       $\ov{{x}_1} + ...+\ov{{x}_k} \equiv \lambda \Mod{F}$ 
    \end{center}
    with $x_i \in \cS \subseteq \F_q[T]$\\
    $J_{\lambda, k,s}(\cS)$ & the number of solutions to the system
    \begin{center}
       $x_1^j + ... + x_s^j = \lambda_j, ~ 1\leq j \leq k$ 
    \end{center}
    with $x_i \in \cS \subseteq \F_q[T]$ and where $\lambda = (\lambda_1,...,\lambda_k) \in \F_q[T]^k$\\
    $Q_{F, \lambda}(\balpha)$ & the sum 
    \begin{center}
        $\displaystyle\sum_{\substack{x_1, x_2 \in \cI_r^2 \\ x_1-x_2 \equiv \lambda(F)}}\balpha(x_1^2)\ov{\balpha(x_2^2)}$
    \end{center}
    with $\balpha$ as above\\
        $Q_{F, \lambda, m}$ & shorthand for $Q_{F, \lambda}(\bm{1}_m)$, or the number of solutions to 
        \begin{center}
            $x_1-x_2 \equiv \lambda \Mod{F}$
        \end{center}
        with $x_i \in \cI_r$ and $\deg_F(x_i^2) < m$\\
     \hline 
\multicolumn{2}{l}{Miscellaneous} \\
$(f * g)(x)$ & the convolution $\sum_{y}f(y)g(x-y)$\\
$f^{(k)}(x)$ & the convolution $(f^{(k-1)}*f)(x)$\\
$h(P)$ & the height of $P$; given a polynomial $P \in \F_q[T][x]$, the degree of its largest coefficient\\
$\textup{Res}(P,Q)$ & the resultant of $P, Q \in \F_q[T][x]$\\
 \end{longtable}
\end{center}

\end{document}